\documentclass[11pt,reqno]{amsart}

\setlength{\textwidth}{6.5 in}
\setlength{\textheight}{9.0 in}
\hoffset=-0.75in
\voffset=-0.5in

\usepackage[utf8]{inputenc}

 \makeatletter
 \def\swappedhead#1#2#3{%
 \thmnumber{\@upn{\normalfont\@ifnotempty{#2}{(}#2\@ifnotempty{#2}{)~}}}%
  \thmname{#1}%
  \thmnote{ {\the\thm@notefont(#3)}}}

\def\@thm#1#2#3{%
  \ifhmode\unskip\unskip\par\fi
  \normalfont
  \trivlist
  \let\thmheadnl\relax
  \let\thm@swap\@gobble
  \let\thm@indent\noindent 
  \thm@headfont{\bfseries}
  \thm@notefont{\fontseries\mddefault\upshape}%
  \thm@headpunct{.}
  \thm@headsep 5\p@ plus\p@ minus\p@\relax
  \thm@space@setup
  #1
  \@topsep \thm@preskip               
  \@topsepadd \thm@postskip           
  \def\@tempa{#2}\ifx\@empty\@tempa
    \def\@tempa{\@oparg{\@begintheorem{#3}{}}[]}%
  \else
    \refstepcounter{#2}%
    \def\@tempa{\@oparg{\@begintheorem{#3}{\csname the#2\endcsname}}[]}%
  \fi
  \@tempa
}
 \makeatother

\usepackage{amssymb}
\RequirePackage{doi}
\usepackage{hyperref}
\usepackage[capitalize]{cleveref}
\usepackage{mathtools} 
\usepackage{babel}

\newtheorem{pfassump}{Assumption}

\swapnumbers  

\numberwithin{equation}{section}

\theoremstyle{plain}

\newtheorem{cor}[equation]{Corollary}
\newtheorem{lem}[equation]{Lemma}
\newtheorem{prop}[equation]{Proposition}
\newtheorem{thm}[equation]{Theorem}

\crefformat{lem}{Lemma~#2#1#3}
\Crefname{lem}{Lemma}{Lemmas}
\crefname{lem}{lemma}{lemmas}
\crefmultiformat{lem}{Lemmas~#2#1#3}{ and~#2#1#3}{, #2#1#3}{ and~#2#1#3}
\crefrangeformat{lem}{Lemmas~#3#1#4 to~#5#2#6}
\crefformat{cor}{Corollary~#2#1#3}
\Crefname{cor}{Corollary}{Corollaries}
\crefname{cor}{corollary}{corollaries}
\crefformat{prop}{Proposition~#2#1#3}
\Crefname{prop}{Proposition}{Propositions}
\crefname{prop}{proposition}{propositions}
\crefmultiformat{prop}{Propositions~#2#1#3}{ and~#2#1#3}{, #2#1#3}{ and~#2#1#3}
\crefformat{thm}{Theorem~#2#1#3}
\Crefname{thm}{Theorem}{Theorems}
\crefname{thm}{theorem}{theorems}
\crefmultiformat{thm}{Theorems~#2#1#3}{ and~#2#1#3}{, #2#1#3}{ and~#2#1#3}

\theoremstyle{definition}

\newtheorem{defn}[equation]{Definition}

\newtheorem{eg}[equation]{Example}
\newtheorem{notation}[equation]{Notation}
\newtheorem*{notation*}{Notation}
\newtheorem*{pfremark}{Remark}
\newtheorem{rem}[equation]{Remark}
\newtheorem{rems}[equation]{Remarks}

\crefmultiformat{rem}{Remarks~#2#1#3}{ and~#2#1#3}{, #2#1#3}{ and~#2#1#3}
\crefformat{rem}{Remark~#2#1#3}
\crefformat{rems}{Remark~#2#1#3}
\Crefname{rem}{Remark}{Remarks}
\crefname{rem}{remark}{remarks}

\crefformat{figure}{Figure~#2#1#3}

\newcounter{case}
\newcounter{caseholder} 
\numberwithin{case}{caseholder}
\newenvironment{case}[1][\unskip]{\refstepcounter{case}\bf\sffamily
\unskip\medskip \indent Case \thecase\ #1.\ \it}{\unskip\upshape}
\renewcommand{\thecase}{\arabic{case}}
\crefformat{case}{Case~#2#1#3}
\Crefname{case}{Case}{Cases}
\crefname{case}{case}{cases}

\newcounter{subcase}
\newenvironment{subcase}[1][\unskip]{\refstepcounter{subcase}\bf\sffamily
\medskip \indent \quad Subcase \thesubcase\ #1.\ \it}{\unskip\upshape}
\numberwithin{subcase}{case}
\crefformat{subcase}{Subcase~#2#1#3}
\crefname{subcase}{subcase}{subcases}
\Crefname{subcase}{Subcase}{Subcases}

\renewcommand{\pmod}[1]{\ (\mathrm{mod}~#1)}

\let\oldenumerate=\enumerate
\def\enumerate{\oldenumerate \itemsep=\smallskipamount}
\let\olditemize=\itemize
\def\itemize{\olditemize \itemsep=\smallskipamount}


\makeatletter 
\def\section{\goodbreak\vskip1cm\@startsection{section}{1}%
  \z@{.7\linespacing\@plus\linespacing}{1.5\linespacing}%
  {\normalfont\bfseries\larger[2]\centering}}
\def\subsection{\goodbreak\@startsection{subsection}{2}%
  \z@{1\linespacing\@plus0.25\linespacing}{0.5\linespacing}%
  {\normalfont\bfseries\centering}}
\renewenvironment{abstract}{%
  \ifx\maketitle\relax
    \ClassWarning{\@classname}{Abstract should precede
      \protect\maketitle\space in AMS document classes; reported}%
  \fi
  \global\setbox\abstractbox=\vtop \bgroup
    \normalfont 
    \vskip5mm 
    \list{}{\labelwidth\z@
      \leftmargin3pc \rightmargin\leftmargin
      \listparindent\normalparindent \itemindent\z@
      \parsep\z@ \@plus\p@
      
    }%
    \item[\hskip\labelsep\scshape\abstractname.]%
}{%
  \endlist\egroup
  \ifx\@setabstract\relax \@setabstracta \fi
}
\makeatother

\newcommand{\cartprod}{\mathbin{\raise0.7pt\hbox{\smaller[2]$\square$}}} 

\newcommand{\bx}[1]{\widetilde{#1}} 
\newcommand{\invol}{\mathsf{z}} 
\newcommand{\twin}{\invol}
\newcommand{\ZZ}{\mathbb{Z}}

\newcommand{\iso}{\cong}

\DeclareMathOperator{\Aut}{Aut}
\DeclareMathOperator{\Cay}{Cay}

\DeclareMathOperator{\Qd}{Qd}
\DeclareMathOperator{\Tr}{Tr}

\newcommand{\ccf}[1]{(cf.~\cref{#1})}
\newcommand{\csee}[1]{(see \cref{#1})}
\newcommand{\fullcsee}[2]{(see \fullcref{#1}{#2})}
\newcommand{\pref}[1]{\textup(\ref{#1}\textup)}
\newcommand{\fullref}[2]{\ref{#1}\pref{#1-#2}}
\newcommand{\fullcref}[2]{\cref{#1}\pref{#1-#2}}

\makeatletter
\newcommand{\noprelistbreak}{\smallskip\@nobreaktrue\nopagebreak} 
\makeatother

\hyphenpenalty=10000 

\begin{document}

\title[Automorphisms of the canonical double cover of a toroidal grid]%
{\larger[2]Automorphisms of the canonical  
\\[2pt] double cover of a toroidal grid} 

\author{Dave Witte Morris}
\address{Department of Mathematics and Computer Science, University of Lethbridge, Lethbridge, Alberta, T1K~3M4, Canada}
\email{dmorris@deductivepress.ca,
https://deductivepress.ca/dmorris}

\date{\today}

\begin{abstract}
The Cartesian product of two cycles ($C_n \cartprod C_m$) has a natural embedding on the torus, such that each face of the embedding is a $4$-cycle. 
The toroidal grid $\Qd(m,n,r)$ is a generalization of this in which there is a shift by~$r$ when traversing the meridian of length~$m$. 

In 2008, Steve Wilson found two interesting infinite families of (nonbipartite) toroidal grids that are unstable. (By definition, this means that the canonical bipartite double cover of the grid has more than twice as many automorphisms as the grid has.) It is easy to see that bipartite grids are also unstable, because the canonical double cover is disconnected. Furthermore, there are degenerate cases in which there exist two different vertices that have the same neighbours. This paper proves Wilson's conjecture that $\Qd(m,n,r)$ is stable for all other values of the parameters. 

In addition, we prove an analogous conjecture of Wilson for the triangular grids $\Tr(m,n,r)$ that are obtained by adding a diagonal to each face of $\Qd(m,n,r)$ (with all of the added diagonals parallel to each other).
\end{abstract}

\maketitle

\vskip-5mm 

\section{Introduction}

\let\myoldsection=\section
\renewcommand{\section}{\newpage\thispagestyle{plain}\myoldsection}

\begin{defn}
The \emph{canonical bipartite double cover} \cite{CanCover} of a graph~$X$ is the bipartite graph~$BX$ with $V(BX) = V(X) \times \{0,1\}$, where
	\[ \text{$(v,0)$ is adjacent to $(w,1)$ in $BX$}
	\quad \iff \quad
	\text{$v$ is adjacent to~$w$ in~$X$} . \]
Letting $S_2$ be the symmetric group on the $2$-element set~$\{0,1\}$, it is clear that $\Aut X \times S_2$ is a subgroup of $\Aut BX$.
 If this subgroup happens to be all of $\Aut BX$, then we say that $X$ is \emph{stable} \cite[p.~160]{MarusicScapellatoSalvi}.
\end{defn}

Understanding unstable graphs is a fundamental problem in the study of automorphisms of direct products (see \cite[Prop.~5.6]{Morris-AutDirProd}), and also arises in other contexts (see the introductions of \cite{QinXiaZhou-circulant} and~\cite{Wilson}).

In the appendix of~\cite{Wilson}, S.\,Wilson stated conjectures about exactly which graphs in certain families are unstable. Here is the current status of each of these conjectures:
\smallskip 
	\begin{enumerate} \renewcommand{\theenumi}{A.\arabic{enumi}} \itemsep=\smallskipamount
	\item Wilson's conjecture about circulant graphs is known to be false: a counterexample was published in~\cite[p.~156]{QinXiaZhou-circulant}, and infinite families of additional counterexamples can be found in~\cite{HMMaut}. We still do not know which circulant graphs are unstable, but progress was made in \cite{FernandezHujdurovic,HMMaut,HMMlow,QinXiaZhou-circulant}.
	\item Wilson's conjecture on generalized Petersen graphs is correct~\cite{QinXiaZhou-Petersen}.
	\item There does not seem to have been any progress on Wilson's conjecture about rose window graphs.
	\item This paper proves (slight generalizations of) Wilson's two conjectures about toroidal graphs.
	\end{enumerate}

\bigbreak 

We now state our main results.

\begin{defn}[{\cite[pp.~380 and 381]{Wilson}}] \label{QdDefn}
Given $m,n \in \ZZ$ (with $m,n \ge 2$), and $r \in \ZZ_n$, we can 
	\begin{itemize}
	\item number the vertices of the cycle~$C_n$ with the elements of~$\ZZ_n$, 
	and 
	\item number the vertices of the path~$P_{m + 1}$ with the elements of $\{0,1,\ldots, m\}$. 
	\end{itemize}
(In the special case where $m = 2$, we let $C_2 = K_2$.) Then 
	\begin{enumerate}
	\item $\Qd(m,n,r)$ is the graph that is obtained from the Cartesian product $C_n \cartprod P_{m + 1}$ by identifying the vertex $(x,m)$ with $(x + r, 0)$, for each $x \in \ZZ_n$,
	and
	\item $\Tr(m,n,r)$ is the graph that is obtained from $\Qd(m,n,r)$ by adding an edge from $(x, y)$ to $(x+1, y - 1)$ for each $x \in \ZZ_n$ and $y \in \{1,2,\ldots, m\}$.
	\end{enumerate}
(See \cref{GridIsCayley} for reformulations of these definitions in the language of Cayley graphs.)
\end{defn}

\begin{rem}
$\Qd(m,n,r)$ has a natural embedding on the torus, such that each face of the embedding is a $4$-cycle.
 (In the special case where $r = 0$, the graph $\Qd(m,n,0)$ is isomorphic to the Cartesian product $C_n \cartprod C_m$.) So $\Qd(m,n,r)$ is often called a \emph{toroidal grid}. The graph $\Tr(m,n,r)$ is obtained by adding a diagonal in each face of $\Qd(m,n,r)$. Therefore, the faces of its natural toroidal embedding are triangles.
 \end{rem}
 
 There are some trivial reasons for a graph to be unstable \cite[p.~360]{Wilson}:
 	\begin{enumerate}
	\item Every disconnected graph is unstable.
	\item Every bipartite graph with a nontrivial automorphism is unstable.
	\item If two different vertices of a graph have the same neighbours, then the graph is unstable. (These are called ``twin vertices'' \cite{KotlovLovasz-twinfree}.)
	\end{enumerate}
This motivates the following definition:
 
 \begin{defn}[{\cite[p.~360]{Wilson}}]
 An unstable graph is \emph{nontrivially unstable} if it is connected and nonbipartite, and has no twin vertices. (Otherwise, it is \emph{trivially unstable}.)
 \end{defn}

The following two results were conjectured by S.\,Wilson \cite[pp.~380 and~381]{Wilson}, who proved the direction~($\Leftarrow$) of each \lcnamecref{Qd} (except parts \pref{Tr-2k/4/pm1} and~\pref{Tr-4/23/pm1} of~\ref{Tr}). 
(We write ``$\pm$'' in \cref{Qd} because $\Qd(m,n,r)$ is always isomorphic to $\Qd(m,n,-r)$, as explained in \cref{abyc}. Also recall that the parameter~$r$ in $\Qd(m,n,r)$ and $\Tr(m,n,r)$ is taken modulo~$n$.)

\begin{thm}[cf.\ \cref{val4}] \label{Qd}
$\Qd(m,n,r)$ is nontrivially unstable if and only if it is:
	\begin{enumerate} 
	\item \label{Qd-m/4k/k}
	$\Qd(m, 4k, \pm k)$, where $m + k$ is odd,
	or
	\item \label{Qd-kl}
	$\Qd(2m, km, \pm 4\ell m)$ \textup($\iso \Qd(m, 2km, \pm 2\ell m)$ if $m > 1$\textup), where $m$ is odd, $4\ell^2 \equiv \pm1 \pmod{k}$, and either $m > 1$ or $2\ell \not\equiv \pm 1 \pmod{k}$.
	\end{enumerate}
\end{thm}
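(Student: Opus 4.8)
The plan is to work in the Cayley-graph reformulation promised in \cref{GridIsCayley}, so that the vertex set becomes a quotient of $\ZZ \times \ZZ$ and the relevant automorphisms can be tracked as affine-type maps. The direction $(\Leftarrow)$ is Wilson's; so the real content is $(\Rightarrow)$: assuming $\Qd(m,n,r)$ is nontrivially unstable, I must force the parameters into one of the two listed families. First I would pass to the canonical double cover $B\Qd(m,n,r)$ and exploit the hypothesis $\lvert \Aut(B\Qd) \rvert > 2\,\lvert \Aut \Qd \rvert$ to produce a ``nontrivial'' automorphism $\varphi$ of the double cover, i.e.\ one not of the form $\alpha \times \sigma$ with $\alpha \in \Aut \Qd$, $\sigma \in S_2$. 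Using connectedness and nonbipartiteness of $\Qd$ (which guarantees $B\Qd$ is connected), together with the absence of twin vertices (which rules out the local degeneracies), I would normalize $\varphi$ so that it fixes a chosen base vertex and study its action on the two sides of the bipartition.

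The key structural step is to understand the automorphism group of the grid itself well enough to recognize when an ``extra'' automorphism of $B\Qd$ exists. Since $\Qd(m,n,r)$ is $4$-regular (in the non-degenerate range), its local structure at each vertex is a $4$-cycle of ``directions,'' and I expect \cref{val4} — the companion result the theorem is cross-referenced against — to classify the vertex-transitive or edge-colour-preserving automorphisms. The standard mechanism for instability here is the existence of a \emph{two-fold automorphism} (equivalently, an expansion/contraction pair, or a ``TF-automorphism'' in Wilson's language): a pair of permutations $(\psi_0, \psi_1)$ of $V(\Qd)$ such that $v \sim w \iff \psi_0(v) \sim \psi_1(w)$, which is not realized by a single automorphism. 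So I would translate nontrivial instability into the existence of such a pair, then analyze the constraints it places on the grid's Cayley structure — concretely, the pair must respect the $4$-cycle faces up to a global combinatorial shift, and chasing a face around a meridian of length $m$ introduces the shift parameter~$r$, while chasing around a longitude of length~$n$ closes up the torus. Matching these two closure conditions is what produces the congruences $4\ell^2 \equiv \pm 1 \pmod k$ and the parity condition $m+k$ odd.

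The main obstacle, I expect, is the case analysis forced by the two genuinely different families: one (part~\pref{Qd-m/4k/k}) where the longitude length is exactly $4k$ and the shift is $\pm k$, reflecting a ``quarter-turn'' symmetry of the double cover; and the other (part~\pref{Qd-kl}) where $m$ is odd and the instability comes from a more arithmetic source governed by a solution of $4\ell^2 \equiv \pm 1 \pmod k$. Separating these requires carefully distinguishing the cases according to whether the extra automorphism of $B\Qd$ acts ``geometrically'' (preserving the toroidal grid structure on each side) or ``diagonally'' (mixing the two lattice directions), and I anticipate that the bulk of the work is in showing these are the \emph{only} two possibilities — that no third type of two-fold automorphism can arise. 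I would handle the small / sporadic parameter values (e.g.\ $m=2$, where $C_2 = K_2$, and the boundary cases already flagged in the statement of~\ref{Tr}) separately by direct computation, and I would use the isomorphism $\Qd(m,n,r) \iso \Qd(m,n,-r)$ from \cref{abyc} throughout to halve the number of sign cases. The non-twin hypothesis should be invoked exactly to eliminate the degenerate parameter ranges where two vertices coincide in their neighbourhoods, ensuring the $4$-regular analysis applies cleanly.
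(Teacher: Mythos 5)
Your proposal does not follow the paper's route, and more importantly it leaves the decisive step unproved. In the paper, \cref{Qd} is not established by a direct analysis of automorphisms of $B\Qd(m,n,r)$: it is deduced from \cref{val4}, the classification of \emph{all} unstable connected abelian Cayley graphs of valency~$4$, by translating each family of that classification through \cref{GridIsCayley} (for both choices of which generator plays the role of~$b$) and then discarding the trivially unstable cases via \cref{GridTriv} and \cref{val4triv}. That translation is where the concrete bookkeeping lives --- e.g.\ computing $|a|=km$ and $|G:\langle a\rangle|=2m$ to see that \fullcref{val4}{square} also yields $\Qd(2m,km,\pm 4\ell m)$, and observing that the twin-vertex degeneracy at $m=1$ is exactly what forces the condition ``$m>1$ or $2\ell\not\equiv\pm1\pmod{k}$.'' Your proposal says nothing about this reduction, and also nothing about why the grids in question must have valency exactly~$4$ in the first place (the paper gets this from \cref{val3}).

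The genuine gap is in your direction $(\Rightarrow)$. You correctly identify that the whole difficulty is showing that ``no third type'' of extra automorphism of the double cover can arise, but you offer no mechanism for proving it: the sentences about chasing faces around meridians and ``matching closure conditions'' describe what the answer should look like, not an argument for why nothing else occurs. The paper's actual proof of this step (inside the proof of \cref{val4}) hinges on three inputs your sketch does not supply: (i) the normality criterion \cref{4cyclenormal}, which shows that unless some coincidence $s+t=u+v\neq 0$ with $\{s,t\}\neq\{u,v\}$ occurs in~$S$, a vertex-stabilizing automorphism of $BX$ is forced to be a group automorphism, whence an isomorphism $\Cay(G;S)\iso\Cay(G;S+\invol)$ whose arithmetic produces exactly the conditions $|\langle a\rangle\cap\langle b\rangle|=4$ or $4\ell^2\equiv\pm1\pmod{k}$; (ii) the previously known circulant classification \cref{val4circulant}, to which all the coincidence cases are reduced (the paper explicitly declines to reprove it); and (iii) \cref{odd} to dispose of odd order. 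Without something replacing these --- in particular, without an analogue of \cref{4cyclenormal} to control automorphisms of the cover that are not affine --- your dichotomy between ``geometric'' and ``diagonal'' automorphisms is not shown to be exhaustive, and the proof does not close.
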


\begin{thm}[cf.\ \cref{val6}] \label{Tr}
$\Tr(m,n,r)$ is nontrivially unstable if and only if it is:
	\begin{enumerate}
	\item \label{Tr-4/2k/2}
	$\Tr(2, 4k, 4) \iso \Tr(2, 4k, -2) \iso \Tr(4, 2k, 2)$,
	or
	\item \label{Tr-2/4k/2k+1}
	$\Tr(2, 4k, 2k + 1)$, 
	or
	\item \label{Tr-2k/4/2}
	$\Tr(2, 4k, 2k) \iso \Tr(2, 4k, 2k + 2) \iso \Tr(2k, 4, 2)$,
	or
	\item \label{Tr-4/4k/0}
	$\Tr(4, 2k, 0) \iso \Tr(4, 2k, 4) \iso \Tr(2k, 4, 0)$, 
	or
	\item \label{Tr-2k/4/pm1}
	$\Tr(2k, 4, 1)$, with $k > 1$, 
	or\/
	$\Tr(2k, 4, -1)$, or\/
	\item \label{Tr-4/23/pm1}
	$\Tr(4, 2, 1)$, or\/
	$\Tr(4, 3, -1)$.
	\end{enumerate}
\textup(If $k$ is odd, then the graphs in \pref{Tr-2k/4/2} are isomorphic to the graphs in~\pref{Tr-4/4k/0}. If $k$ is even, then the two graphs in \pref{Tr-2k/4/pm1} are isomorphic to each other.\textup)
\end{thm}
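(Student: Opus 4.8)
The plan is to work throughout with the descriptions of the grids as Cayley graphs (\cref{GridIsCayley}): $\Tr(m,n,r) \iso \Cay(G,S)$, where $G$ is a two-generated abelian group of order~$mn$ and $S$ is the (generically six-element) connection set consisting of the horizontal, vertical, and diagonal steps together with their inverses, and $\Qd(m,n,r) \iso \Cay(G,S_0)$ for the four-element subset $S_0 \subseteq S$ of non-diagonal steps. I would then use the standard reduction: a connected, nonbipartite graph~$X$ is stable if and only if every pair $(\alpha,\beta)$ of permutations of $V(X)$ with $v \sim w \iff \alpha(v) \sim \beta(w)$ has $\alpha = \beta$. When $X = \Cay(G,S)$ has no twin vertices, the \emph{affine} such pairs, after normalizing so that $\alpha(0)=0$, are exactly the pairs $(\phi,\,\phi+c)$ with $\phi \in \Aut(G)$ and $\phi(S) = S-c$; so such an~$X$ is stable as soon as (a)~every compatible pair is affine and (b)~no $\phi \in \Aut(G)$ satisfies $\phi(S) = S-c$ with $c \ne 0$. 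Since \cref{Tr} is an equivalence and Wilson already proved direction~($\Leftarrow$) for parts \pref{Tr-4/2k/2}--\pref{Tr-4/4k/0} of \cref{Tr}, what remains is direction~($\Leftarrow$) for parts \pref{Tr-2k/4/pm1} and~\pref{Tr-4/23/pm1}, and direction~($\Rightarrow$) in full.

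For the missing instances of~($\Leftarrow$) I would produce the unexpected automorphisms explicitly. For the two sporadic graphs $\Tr(4,2,1)$ and $\Tr(4,3,-1)$ this is a finite verification: build the canonical double cover, exhibit a generator of the extra automorphisms (equivalently, a non-diagonal compatible pair), and confirm that the graph is connected, nonbipartite, and twin-free. For the family $\Tr(2k,4,\pm 1)$ I would read off $G$ and~$S$ from \cref{GridIsCayley} and either find $\phi \in \Aut(G)$ and $c \ne 0$ with $\phi(S) = S-c$, or, if no affine witness exists, write down a non-affine compatible pair $(\alpha,\beta)$ directly; the meridian of length~$4$ is short enough that such a map can be described coordinatewise. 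The nontriviality conditions must be checked here too, and they feed into the other direction.

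For direction~($\Rightarrow$) I would first dispose of the trivially unstable grids by determining when $\Tr(m,n,r)$ is disconnected, when it is bipartite (a congruence condition on $m,n,r$), and when it has twin vertices (only for small parameters); since $\Qd(m,n,r) \subseteq \Tr(m,n,r)$ on a common vertex set, much of this parallels, and some follows from, the analysis needed for \cref{Qd}. The core is then to show that every nontrivially unstable $\Tr(m,n,r)$ is one of the six listed graphs, i.e.\ that all others are stable. For this I would (i)~prove that $\Tr(m,n,r)$ is a \emph{normal} Cayley graph on~$G$ and that its canonical double cover is a normal Cayley graph on $G \times \ZZ_2$, except for an explicit short list of small parameters (the double-cover analogue of \cref{val6}); (ii)~in the normal case---where (a) holds by~(i)---reduce stability to the arithmetic question (b) of whether some $\phi \in \Aut(G)$ satisfies $\phi(S) = S-c$ with $c \ne 0$, and resolve it by a case analysis on $m,n,r$, which should produce exactly families \pref{Tr-4/2k/2}--\pref{Tr-4/4k/0}; and (iii)~handle the finitely many non-normal exceptional parameters by hand, which is where $\Tr(2k,4,\pm 1)$ and the two sporadic graphs appear, and where one confirms the remaining exceptions are stable or already trivially unstable. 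I would first establish the isomorphisms recorded in the statement (via the relabelings of \cref{abyc}), so that the case analysis need only treat one normal form of each grid; this also accounts for the parity coincidences in the parenthetical remark.

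The step I expect to be the main obstacle is~(i): bounding $\Aut\bigl(B\Tr(m,n,r)\bigr)$ from above. Passing to the canonical double cover destroys every triangular face of $\Tr(m,n,r)$---the cover is bipartite and has no odd cycle at all---so the Cayley structure of the cover has to be recovered from its $4$- and $6$-cycles, and with valency~$6$ there are many of these to control. Proving that the base group $G \times \ZZ_2$ is normal in the automorphism group therefore requires a careful count of short cycles through each edge, and the parameters where that count fails to force normality are precisely the small sporadic ones, so locating them and analyzing each individually cannot be avoided. (The valency-$4$ statements \cref{Qd} and \cref{val4} are the same plan one dimension down; I would run both in parallel, using the $\Qd$ analysis to show that the horizontal and vertical directions of $\Tr(m,n,r)$ already pin down most of the structure of its double cover.)
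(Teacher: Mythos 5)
Your overall architecture is close to the paper's in spirit: the paper also passes to the Cayley-graph model via \cref{GridIsCayley}, also splits into an ``affine'' case (an automorphism of $BX$ that is a group automorphism of $G\times\ZZ_2$, reduced to the arithmetic condition $\varphi(S)=S+\invol$) and a ``non-affine'' case controlled by \cref{4cyclenormal}, and also treats parts \pref{Tr-2k/4/pm1} and~\pref{Tr-4/23/pm1} of~($\Leftarrow$) by exhibiting the instability directly. The main structural difference is that the paper does not prove \cref{Tr} head-on: it first proves \cref{val6}, a classification of unstable $\Cay(G;\pm a,\pm b,\pm(a+b))$ for arbitrary two-generated abelian~$G$, leaning heavily on the prior classifications of unstable circulant graphs of valency $5$ and~$6$ (\cref{val5circulant,val6circulant}) for the cyclic case, on \cref{odd} for odd order, and on a short computer check for a handful of small groups; \cref{Tr} is then a pure translation-and-bookkeeping exercise from \cref{val6} via \cref{GridIsCayley,abyc}. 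Your plan is self-contained, which means you would in effect be re-deriving a substantial part of that circulant classification inside your case analysis.

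The genuine gap is in your step~(iii), where you assume the parameters for which normality of the double cover fails are ``finitely many'' and ``small sporadic ones.'' They are not. The hypothesis of the Baik--Feng--Sim--Xu criterion fails whenever the connection set has an additive coincidence $s+t=u+v\neq 0$ with $\{s,t\}\neq\{u,v\}$, and for these grids that happens along \emph{infinite families}: for example $2a=2b$ gives the grids $\Cay\bigl(\ZZ_n\times\ZZ_2;\pm(1,0),\pm(1,1),\pm(2,1)\bigr)$ for every even~$n$ with $n/2$ odd (which must all be shown \emph{stable}), and $3a=0$ gives $\Cay\bigl(\ZZ_3\times\ZZ_n;\pm(1,0),\pm(0,1),\pm(1,1)\bigr)$, i.e.\ $\Tr(3,n,0)$, for every~$n$. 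No short-cycle count forces normality there, and no finite check disposes of them. The paper handles each such family with a bespoke combinatorial argument: counting vertices joined to a given vertex by a \emph{unique} path of length~$2$ to pin down the $\bx c$-direction in the $2a=2b$ family, and counting paths of lengths $3$ and~$6$ modulo~$3$ in the $\ZZ_3\times\ZZ_n$ family (with a computer check for $n\le 12$, where the counting argument's inequality is not available). It also needs \cref{triangles} for the infinite family with a generator of order~$2$. Without a concrete plan for these infinite non-normal families, your direction~($\Rightarrow$) does not close; everything else in your outline is workable.
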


\begin{eg}
By searching \cref{Qd} for cases where $r = 0$, we see that, for $n \ge m \ge 2$, the Cartesian product $C_n \cartprod C_m$ is nontrivially unstable if and only if $n = 2m$ and $m$~is odd. 
\end{eg}

\begin{rem} \label{GridTriv}
\Cref{Qd,Tr} only list graphs that are \emph{nontrivially} unstable.
However, it is easy to check whether a toroidal grid is trivially unstable. First, note that they are all connected. Also:
	\begin{enumerate} \renewcommand{\theenumi}{\alph{enumi}}
	\item \label{GridTriv-QdBip}
	$\Qd(m, n, r)$ is bipartite if and only if $n$ and~$m + r$ are even.
	\item $\Qd(m, n, r)$ has twin vertices if and only if $m = 2$ and $r = \pm 2$ \ccf{val4triv}).
	\item $\Tr(m, n, r)$ is never bipartite (because it has triangles).
	\item \label{GridTriv-TrTwin}
	$\Tr(m, n, r)$ has twin vertices if and only if $(m,n,r) \in \{(2,4,1), (3,3,0)\}$ \ccf{val6triv}.
	\end{enumerate}
\end{rem}

\begin{rems} \leavevmode
\noprelistbreak
	\begin{enumerate}
	
	\item The assumption that $m,n \ge 2$ is not stated explicitly in~\cite{Wilson}. Wilson's conjectures also seem to implicitly assume that $\gcd(n, r) \neq 1$. We do not make this assumption, so \cref{Qd,Tr} include include infinite families of graphs that are not listed in Wilson's conjectures.
	

	\item There are other differences between Wilson's conjectures \cite[pp.~380 and~381]{Wilson} and our statements of the results. In particular:
		\begin{enumerate}
		\item Wilson omits $\Qd(m, 4k, -k)$ and $\Qd(2m, km, \pm 4\ell)$, and usually omits $\Tr(m,n, m - r)$ when $\Tr(m,n,r)$ is listed. \Cref{abyc} explains that they are alternate representations of other graphs in the list.
	
	\item Wilson requires $k$ to be odd in \fullref{Qd}{kl}, but we omit this redundant condition: it is a consequence of the equation $4\ell^2 \equiv \pm1 \pmod{k}$.
		\item Wilson uses $4k$ in \fullref{Tr}{4/4k/0}, instead of~$2k$. That eliminates the overlap with~\fullref{Tr}{2k/4/2}.
		\end{enumerate}
	
	\item The two occurrences of~``$\pm$'' in \fullref{Qd}{kl} cause redundancy (and could therefore be omitted), because $-\ell$ satisfies the congruence $4\ell^2 \equiv \pm1 \pmod{k}$ whenever $\ell$~does.
	\end{enumerate}
\end{rems}

\begin{rem}
Following a section of preliminaries, \cref{val4} is proved in \cref{val4sect}, and \cref{val6} is provided in \cref{val6sect}.
\Cref{val4,val6} are slightly more general than \cref{Qd,Tr}. For example, their statements in the language of abelian Cayley graphs allow for the case where $m$ is equal to~$1$. See the well-known \cref{GridIsCayley} for the translation between the two languages.
\end{rem}

\section{Preliminaries}

All graphs in this paper are simple (no loops or multiple edges).

\subsection{Abelian Cayley graphs}

\begin{defn}[cf.\ {\cite[p.~34]{GodsilRoyle}}] \label{CayleyDefn}
Let $S$ be a subset of an additive abelian group~$G$, such that $S = -S$ and $0 \notin S$. The corresponding \emph{abelian Cayley graph} $\Cay(G; S)$ is the graph whose vertices are the elements of~$G$, and with an edge joining the vertices $g$ and~$h$ if and only if $g = h + s$ for some $s \in S$.
\end{defn}

\begin{rem}
The adjective ``abelian'' in ``abelian Cayley graph'' is to emphasize the assumption that $G$ is abelian, so we will sometimes omit it when it is not relevant.
(The usual definition of $\Cay(G; S)$ does not require $G$ to be abelian, but we have no need for the nonabelian case in this paper.)
\end{rem}

The following simple (and well known) observation notes that the toroidal grids $\Qd(m,n,r)$ and $\Tr(m,n,r)$ are isomorphic to abelian Cayley graphs. The minus sign in $\Tr(m,n,-r)$ is because the definition of $\Tr(m,n,-r)$ would naturally identify it with the Cayley graph having $a - b$ as the third generator, but, for our purposes, it is more convenient to use $a + b$.

\begin{lem} \label{GridIsCayley}
Given $m,n,r \in \ZZ$ \textup(with $m,n \ge 2$\textup), let 
	\[ G =  \langle\, a, b \mid ma = rb, \ nb = 0, \ a + b = b + a \,\rangle , \]
so $G$ is an abelian group of order~$mn$. Then
	\[ \Qd(m,n, r) 
		\iso \Cay(G; \pm a, \pm b) \]
and 
	\[ \Tr(m,n,-r) 
		\iso \Cay \bigl( G; \pm a, \pm b, \pm(a + b) \bigr)
	. \]
\end{lem}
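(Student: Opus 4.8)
The plan is to write down explicit labelings of the two vertex sets by elements of~$G$ and then check, edge by edge, that these labelings are graph isomorphisms; no deep input is needed, so the work is essentially bookkeeping. I would begin by recording the structure of~$G$. Since $G$ is the abelian group obtained from~$\ZZ^2$ by quotienting by the sublattice~$L$ spanned by $(m,-r)$ and~$(0,n)$, it is finite of order $[\ZZ^2:L]=mn$, which is the absolute value of the determinant of the matrix with those two rows. Because $m,n\ge 2$, this already forces $a$, $b$, and $a\pm b$ all to be nonzero: if any of them vanished, $G$ would be cyclic, generated by a single element killed by~$m$ or by~$n$, hence of order at most $\max\{m,n\}<mn$. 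So the connection sets in the statement are legitimate (they satisfy $S=-S$ and $0\notin S$). Finally, reducing the generator~$a$ modulo the relation $ma=rb$ and the generator~$b$ modulo $nb=0$ shows that every element of~$G$ has the form $ya+xb$ with $0\le y\le m-1$ and $x\in\ZZ_n$; since there are exactly $mn=|G|$ such pairs $(y,x)$, this representation is unique. Call it the \emph{normal form}.

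For the first isomorphism, recall that every vertex of $\Qd(m,n,r)$ has a unique representative $(x,y)$ with $x\in\ZZ_n$ and $0\le y\le m-1$ (the sole identification being $(x,m)\equiv(x+r,0)$), and set $\phi(x,y)=ya+xb$. By the normal form, $\phi$ is a bijection, so it remains to match up edges. The horizontal edges $(x,y)\sim(x{+}1,y)$ go to pairs differing by~$b$; the ``interior'' vertical edges $(x,y)\sim(x,y{+}1)$ with $y\le m-2$ go to pairs differing by~$a$; and the ``wraparound'' vertical edges $(x,m{-}1)\sim(x{+}r,0)$ go to $(m-1)a+xb$ and $(x+r)b$, whose difference is $(x+r)b-(m-1)a=ma-(m-1)a=a$ by the defining relation $rb=ma$. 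Conversely, given an edge $\{g,g{+}s\}$ of $\Cay(G;\pm a,\pm b)$, writing $g=ya+xb$ in normal form and splitting into the cases $s=b$, $s=a$ with $y\le m-2$, and $s=a$ with $y=m-1$ exhibits it as the image of one of the three edge types. Hence $u\sim v$ in $\Qd(m,n,r)$ if and only if $\phi(u)\sim\phi(v)$ in $\Cay(G;\pm a,\pm b)$, so $\phi$ is an isomorphism.

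For the triangular grid, the very same map~$\phi$, applied now to $\Tr(m,n,r)$, sends each added diagonal edge $(x,y)\sim(x{+}1,y{-}1)$ to a pair differing by $b-a$ (the ``wraparound diagonal'' at $y=m$ again works out using $rb=ma$), so $\Tr(m,n,r)\iso\Cay(G;\pm a,\pm b,\pm(a-b))$. To reach the stated form I would then substitute $-r$ for~$r$ and compose with the isomorphism from the group presented with parameter~$-r$ onto~$G$ that fixes~$a$ and negates~$b$ — this is well defined since $ma=rb=-r(-b)$ holds in~$G$ — which converts the third generator $a-b$ into $a+b$ and yields $\Tr(m,n,-r)\iso\Cay(G;\pm a,\pm b,\pm(a+b))$. (Equivalently, one can verify the bijection $\psi(x,y)=ya-xb$ on $\Tr(m,n,-r)$ directly.)

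I do not expect a genuine obstacle; the points that require care are the computation that $|G|=mn$ together with the counting argument that upgrades $\phi$ from a surjection to a bijection, and the sign bookkeeping in the $\Tr$ case — the construction of $\Tr(m,n,r)$ naturally produces $a-b$ as the diagonal generator, and the parameter is negated in the statement precisely so that the tidier generator $a+b$ appears instead (as noted in the remark preceding the lemma).
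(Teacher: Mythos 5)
Your proof is correct. The paper treats this lemma as a well-known observation and supplies no proof of its own, but the argument it has in mind is exactly yours: the explicit labeling $\phi(x,y)=ya+xb$ (with the order count $|G|=mn$ upgrading surjectivity to bijectivity), followed by the substitution $b\mapsto -b$ to convert the naturally occurring diagonal generator $a-b$ into $a+b$ --- the remark immediately preceding the lemma confirms that this sign change is precisely why the statement uses $\Tr(m,n,-r)$ --- so your write-up simply supplies the details the paper omits.
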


\begin{cor} \label{abyc}
$\Qd(m,n, r) \iso \Qd(m,n, -r)$ and\/ $\Tr(m,n,r) \iso \Tr(m,n, m - r)$.
\end{cor}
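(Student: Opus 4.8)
The plan is to deduce both isomorphisms from \cref{GridIsCayley} together with the elementary observation that if $\varphi\colon G \to G'$ is an isomorphism of abelian groups with $\varphi(S) = S'$, then $\varphi$ is also a graph isomorphism from $\Cay(G;S)$ to $\Cay(G';S')$. Thus, in each case it suffices to apply \cref{GridIsCayley} to both sides and to exhibit a group isomorphism carrying one connection set onto the other. Since \cref{GridIsCayley} tells us that the two groups involved both have order $mn$, any surjective homomorphism between them is automatically bijective, so I only need to check that the proposed map respects the defining relations and that its image generates.

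For the first isomorphism, write $G = \langle\, a,b \mid ma = rb,\ nb = 0,\ a+b=b+a \,\rangle$ and $G' = \langle\, a',b' \mid ma' = -rb',\ nb' = 0,\ a'+b'=b'+a' \,\rangle$, so that $\Qd(m,n,r) \iso \Cay(G; \pm a, \pm b)$ and $\Qd(m,n,-r) \iso \Cay(G'; \pm a', \pm b')$ by \cref{GridIsCayley}. I would define $\varphi\colon G \to G'$ on generators by $\varphi(a) = a'$ and $\varphi(b) = -b'$. The relation $ma = rb$ maps to $ma' = -rb'$, which holds in~$G'$, and $nb = 0$ maps to $n(-b') = 0$; hence $\varphi$ is a well-defined homomorphism. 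It is onto because $a'$ and $-b'$ generate~$G'$, and it carries $\{\pm a, \pm b\}$ onto $\{\pm a', \pm b'\}$. This gives $\Qd(m,n,r) \iso \Qd(m,n,-r)$.

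For the second isomorphism, recall that \cref{GridIsCayley} expresses $\Tr(m,n,-r)$ (note the sign) as a Cayley graph, so I would set $H_1 = \langle\, a,b \mid ma = -rb,\ nb = 0,\ a+b=b+a \,\rangle$ and $H_2 = \langle\, a,b \mid ma = (r-m)b,\ nb = 0,\ a+b=b+a \,\rangle$, which gives $\Tr(m,n,r) \iso \Cay\bigl(H_1; \pm a, \pm b, \pm(a+b)\bigr)$ and $\Tr(m,n,m-r) \iso \Cay\bigl(H_2; \pm a, \pm b, \pm(a+b)\bigr)$. Now define $\psi\colon H_1 \to H_2$ by $\psi(a) = -(a+b)$ and $\psi(b) = b$, so that $\psi(a+b) = -a$; thus $\psi$ permutes the three ``directions'' $a$, $b$, $a+b$ up to sign, and so carries $\{\pm a, \pm b, \pm(a+b)\}$ onto itself. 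The relation $ma = -rb$ of~$H_1$ maps to $-m(a+b) = -rb$, that is, $ma = (r-m)b$, which is the defining relation of~$H_2$, and $nb = 0$ is preserved; since $-(a+b)$ and $b$ generate~$H_2$, the map $\psi$ is an isomorphism, yielding $\Tr(m,n,r) \iso \Tr(m,n,m-r)$.

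There is no real obstacle here: the content is entirely in \cref{GridIsCayley}, and the only thing to watch is the bookkeeping of signs — in particular the $-r$ appearing in \cref{GridIsCayley} for $\Tr$, which is exactly what makes the reflection $a \mapsto -(a+b)$, $b \mapsto b$ send the parameter $r$ to $m-r$ rather than to $-r$.
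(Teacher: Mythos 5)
Your proof is correct and takes essentially the same route as the paper's: both apply \cref{GridIsCayley} and relabel generators ($b \mapsto -b$ for $\Qd$, and $a \mapsto -(a+b)$ for $\Tr$) so that the connection set is preserved while the defining relation acquires the new parameter. The only cosmetic difference is that you spell this out as an explicit isomorphism between two presented groups, whereas the paper phrases it as choosing new generators within the same group.
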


\begin{proof}
($\Qd$) We have $ma = -r(-b)$ and $n(-b) = 0$, so using $-b$ in the place of~$b$ yields a representation of $\Cay(G; \pm a, \pm b)$ as $\Qd(m,n, -r)$.

($\Tr$) Let $c = -(a + b)$, so $\{\pm a, \pm b, \pm(a + b)\} = \{\pm c, \pm b, \pm (c + b) \}$. Then
	\[ mc = -ma - mb = -rb - mb = -(r + m) b , \]
so using $c$ in the place of~$a$ yields a representation of $\Cay(G; \pm a, \pm b, \pm(a + b))$ as $\Tr(m,n, r + m)$. Therefore $\Tr(m,n, -r) \iso \Tr(m,n, r + m)$.
\end{proof}

\begin{rem}
By replacing $a$ with~$-a$, the proof of \cref{abyc} shows that $\Qd(m,n,r) \iso \Qd(m,n,-r)$. However, the same trick does not work for $\Tr(m,n,r)$: if $a$ is replaced with $-a$, then the equation $a + b + c = 0$ forces $b$ and~$c$ to also be replaced with their negatives. Since $m(-a) = -ma = -rb = r(-b)$, this does not result in any change in the parameter~$r$.
\end{rem}

Here is an abelian Cayley graph that appears in the statement of \fullcref{val4}{MoebiusPrism}:

\begin{notation}
$M_{2n} = \Cay(\ZZ_{2n}; \pm 1, n)$ is the Moebius ladder with $2n$ vertices.
\end{notation}

\subsection{Some classes of stable/unstable abelian Cayley graphs}

\begin{thm}[Morris {\cite[Thm.~1.1]{Morris-AutDirProd}}] \label{odd}
There are no nontrivially unstable abelian Cayley graphs of odd order.
\end{thm}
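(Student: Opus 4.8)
The plan is to pass to the canonical double cover $BX = \Cay(G \times \ZZ_2;\, \pm(s,1)\text{ for }s\in S)$ and exploit its linear algebra. First, observe that an abelian Cayley graph of odd order with $|G|>1$ is automatically non-bipartite (any $s\in S$ has odd order $d$, so $\underbrace{s+\cdots+s}_{d}=0$ is an odd closed walk), so a \emph{connected} such $X$ has $BX$ connected, with the canonical bipartition $V_i = G\times\{i\}$. Since $\Aut(X)\times S_2\le\Aut(BX)$, with $\Aut(X)\times\{1\}$ sitting inside the subgroup $\Delta^+$ fixing each part (of index~$2$, the sheet‑swap $z$ being in the other coset), stability is equivalent to showing that every $\gamma\in\Delta^+$ has its two ``sheet maps'' equal. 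Writing $\gamma|_{V_i}$ as a permutation $\alpha_i$ of $G$, being an automorphism means $[\,v-w\in S\iff\alpha_0(v)-\alpha_1(w)\in S\,]$ for all $v,w$. If $X$ is twin‑free, the subgroup of $\Delta^+$ fixing $V_0$ pointwise is trivial (such an element forces $N_X(\alpha_1(w))=N_X(w)$ for all $w$), so it suffices to prove $\alpha_0\in\Aut(X)$: then $(\alpha_0,\alpha_0)\in\Delta^+$ restricts to $V_0$ the same way as $\gamma$, hence equals $\gamma$.

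Next I would go to matrices. Let $A$ be the adjacency matrix of $X$ and $P,Q$ the permutation matrices of $\alpha_0,\alpha_1$. The automorphism condition becomes $PA=AQ$; transposing (using $A=A^{T}$) also gives $QA=AP$, so $D:=P-Q$ satisfies $AD=-DA$ (and $D\mathbf 1=0$, $D$ has entries in $\{-1,0,1\}$, and $D$ commutes with $A^{2}$). Proving $\alpha_0\in\Aut(X)$ is exactly proving $AD=0$. Since $AD+DA=0$, the map $D$ carries the $\lambda$‑eigenspace $E_\lambda$ of $A$ into $E_{-\lambda}$. The eigenvalues of $A$ are the character sums $\lambda_\chi=\sum_{s\in S}\chi(s)$ ($\chi\in\widehat G$), which are real and lie in a cyclotomic field $\mathbb{Q}(\zeta_n)$ of odd conductor $n=\exp(G)$. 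The decisive point — the heart of the argument — is the \textbf{spectral lemma}: \emph{when $|G|$ is odd, no two nonzero eigenvalues of $A$ are negatives of one another.} Granting it, $D$ annihilates every $E_\lambda$ with $\lambda\ne0$ (as $E_{-\lambda}=0$ there) and maps $\ker A$ into $\ker A$; since $\mathbb{R}^{G}=\ker A\oplus\bigoplus_{\lambda\ne0}E_\lambda$ (note $\mathbf 1\in E_{|S|}\subseteq(\ker A)^\perp$), this forces $\mathrm{im}(D)\subseteq\ker A$, i.e.\ $AD=0$, finishing the proof. The odd hypothesis is used twice (for non‑bipartiteness, and in the spectral lemma), the twin‑free hypothesis only in the faithfulness reduction; note that $\ker A$ may genuinely be nonzero, but this causes no trouble because we need only $AD=0$, not $D|_{\ker A}=0$.

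The main obstacle is the spectral lemma. A relation $\lambda_\chi+\lambda_\psi=0$ is a vanishing sum of $2|S|$ roots of unity of odd order, so by the Lam–Leung description it must decompose into ``prime blocks'' (rotates of complete sets of $p$‑th roots of unity, $p\mid n$ odd), and I would try to show that the conjugation symmetry forced by $S=-S$, together with the fact that $-1$ is not a root of unity of odd order, is incompatible with such a decomposition unless $\lambda_\chi=0$; the care needed here comes from the fact that genuine singular cases $\lambda_\chi=0$ do arise (e.g.\ on $\ZZ_5\times\ZZ_3$ or $\ZZ_3^2$). If the lemma resisted proof in full generality, the fallback would be to analyze the imprimitivity block system that an antipodal eigenvalue pair imposes on $G$ — a vanishing character sum makes $S$ equidistributed over the cosets of a proper subgroup $H$ — then pass to $G/H$ and close by induction on $|G|$ using twin‑freeness; that inductive case analysis would be the technically heaviest part.
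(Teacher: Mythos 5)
This theorem is not proved in the paper at all --- it is imported as a black box from \cite[Thm.~1.1]{Morris-AutDirProd} --- so the only comparison available is with that reference, whose argument runs through the theory of automorphisms of direct products (note $BX = X \times K_2$), not through the spectrum. Your preliminary reductions are all correct and cleanly stated: connectedness plus odd order gives nonbipartiteness, the index-$2$ subgroup $\Delta^+$ reduces stability to showing that every ``two-fold automorphism'' $(\alpha_0,\alpha_1)$ has $\alpha_0\in\Aut X$, twin-freeness handles the kernel, and $D=P-Q$ does anticommute with $A$ and carry $E_\lambda$ into $E_{-\lambda}$. The fatal problem is exactly the step you identify as the heart of the argument: the spectral lemma is \emph{false}. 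Take
\[ S=\{\pm(0,1),\ \pm(0,2),\ \pm(1,1)\}=\{(0,1),(0,8),(0,2),(0,7),(1,1),(2,8)\}\subseteq \ZZ_3\times\ZZ_9 . \]
This is a symmetric generating set avoiding $0$, and $\Cay(\ZZ_3\times\ZZ_9;S)$ is connected, nonbipartite, and twin-free (one checks $S$ is not a union of cosets of any of the four subgroups of order~$3$, and $6$ is not divisible by~$9$), so the theorem applies to it. Yet for $\chi(x,y)=\omega^{x}$ with $\omega=e^{2\pi i/3}$ one gets $\lambda_\chi=4+\omega+\omega^{2}=3$, while for $\psi(x,y)=\omega^{y}$ one gets $\lambda_\psi=3\omega+3\omega^{2}=-3$. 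So nonzero antipodal eigenvalue pairs do occur in odd order, and your argument then says nothing about $D$ on $E_{3}\oplus E_{-3}$; since the graph is in fact stable, whatever forces $AD=0$ here must use that $D$ is a difference of two permutation matrices, not merely an integer matrix anticommuting with $A$.

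The fallback you sketch does not repair this, because it is aimed at the wrong degeneracy: equidistribution of $S$ over the cosets of a proper subgroup is what a \emph{vanishing} character sum buys you, whereas the obstruction above is a nonvanishing pair $\lambda_\psi=-\lambda_\chi=-3$ (both rational, so Galois conjugation gives no extra leverage either), and an induction on quotients would additionally have to contend with $G/H$ failing to be twin-free. A smaller inaccuracy: the Lam--Leung decomposition of a vanishing sum of roots of unity into rotated complete sets of $p$-th roots is guaranteed only when the conductor has at most two distinct prime divisors; for three or more primes only the count of terms survives. So the proposal is a reasonable reduction followed by a key lemma that a $27$-vertex example already refutes; to complete a proof along these lines you would need a genuinely different mechanism for killing $D$ on antipodal eigenspaces, or you should follow the direct-product route of the cited source.
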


Recall that if $G$ is cyclic, then $\Cay(G; S)$ is a \emph{circulant} graph.
The following result is stated only for circulant graphs in~\cite{HMMlow}, but exactly the same proof applies to abelian Cayley graphs. 

\begin{prop}[Hujdurović-Mitrović-Morris,  {cf.\ \cite[Prop.~4.2]{HMMlow}}]
\label{val3}
There are no nontrivially unstable abelian Cayley graphs of valency $\le 3$.
\end{prop}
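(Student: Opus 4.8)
The plan is to reduce to the nontrivially unstable case, classify the abelian Cayley graphs of valency $\le 3$ that survive that reduction, and then verify stability of each. Valency $\le 2$ is quick. An abelian Cayley graph of valency $\le 1$ is a disjoint union of vertices and edges, so the only connected ones are $K_1$ and $K_2$, neither of which is nontrivially unstable. An abelian Cayley graph of valency $2$ is a disjoint union of cycles, so a connected one is a single cycle $C_n$; this is bipartite (hence trivially unstable) when $n$ is even, and has odd order (hence is not nontrivially unstable, by \cref{odd}) when $n$ is odd.

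So we may assume $X = \Cay(G;S)$ has valency exactly~$3$. Since $|S|$ is odd and $S = -S$, the set $S$ contains a unique involution~$t$, so $S = \{t, s, -s\}$ with $2t = 0 \ne t$ and $s$ of order $\ge 3$; connectedness gives $G = \langle s, t\rangle$. The image of~$t$ generates $G/\langle s\rangle$, so $[G:\langle s\rangle] \in \{1,2\}$. If $\langle s\rangle = G$, then $G$ is cyclic, $s$ is a generator, and $t$ is the unique involution $|G|/2$; scaling by~$s^{-1}$ gives $X \iso M_{|G|}$. If $[G:\langle s\rangle] = 2$, then the $\pm s$-edges partition $V(X)$ into the two cosets of $\langle s\rangle$, each inducing an $m$-cycle ($m = |G|/2 \ge 3$), and the $t$-edges form the matching $v \leftrightarrow v + t$ between corresponding vertices of the two cycles, so $X \iso C_m \cartprod K_2$. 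Thus every connected cubic abelian Cayley graph is a Möbius ladder $M_N$ ($N \ge 4$ even) or a prism $C_m \cartprod K_2$ ($m \ge 3$).

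Now $C_m \cartprod K_2$ is bipartite when $m$ is even, $C_2 \cartprod K_2 = C_4$ has twin vertices, and $M_{2n}$ is bipartite when $n$ is odd (this case includes $M_6 = K_{3,3}$), so these are all trivially unstable. The remaining graphs are the prisms $C_m \cartprod K_2$ with $m$ odd, $m \ge 3$, and the Möbius ladders $M_{2n}$ with $n$ even, $n \ge 2$. For these, the identity $B\Cay(G;S) = \Cay\bigl(G \times \ZZ_2;\ \{(\sigma,1) : \sigma \in S\}\bigr)$ together with a suitable automorphism of the group $G \times \ZZ_2$ shows
	\[ B\bigl(C_m \cartprod K_2\bigr) \iso C_{2m} \cartprod K_2 \ \ (m\ \text{odd}), \qquad
	   B\bigl(M_{2n}\bigr) \iso C_{2n} \cartprod K_2 \ \ (n\ \text{even}). \]
One then finishes by comparing orders of automorphism groups, using the standard facts that $|\Aut(C_k \cartprod K_2)| = 4k$ for $k \ge 3$, $k \ne 4$ (the only exception being $C_4 \cartprod K_2 = Q_3$, with $|\Aut Q_3| = 48$), and $|\Aut M_{2n}| = 4n$ for $2n \ge 8$ (the only relevant exception being $M_4 = K_4$, with $|\Aut K_4| = 24$): in every case $|\Aut BX| = 2\,|\Aut X| = |\Aut X \times S_2|$, so $X$ is stable.

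The main obstacle is this last step: identifying the canonical double covers correctly — the isomorphisms are routine but depend on the parity hypotheses $m$ odd, $n$ even, which is precisely why the other prisms and Möbius ladders are only \emph{trivially} unstable — and then treating the small exceptional graphs $K_4$, $Q_3$, and the $3$-prism, whose automorphism groups are not given by the generic formulas, separately. As an alternative to computing the double covers, one could instead verify stability by showing directly that every two-fold automorphism of such a prism or Möbius ladder is diagonal, but the amount of bookkeeping is comparable.
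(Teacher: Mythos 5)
The paper does not actually prove this proposition: it is imported from \cite{HMMlow} with the remark that the circulant-graph proof of \cite[Prop.~4.2]{HMMlow} carries over verbatim to abelian Cayley graphs, so there is no in-paper argument to compare yours against line by line. Your self-contained proof --- classify the connected abelian Cayley graphs of valency~$3$ as prisms $C_m \cartprod K_2$ and Moebius ladders $M_{2n}$, discard the bipartite ones and the odd-order cycles (via \cref{odd}), identify the canonical double covers of the survivors as $C_{2m} \cartprod K_2$ and $C_{2n} \cartprod K_2$ (the first via \cref{B(XboxBip)}), and then compare $|{\Aut BX}|$ with $2\,|{\Aut X}|$ using \cref{Aut(C4timesX)} --- is sound, and the numerical checks, including the exceptional graphs $K_4 = M_4$, the cube $C_4 \cartprod K_2$, and $M_8$, all come out right.

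One step of your case analysis is misstated, although the damage turns out to be nil. From $|S| = 3$ and $S = -S$ you may only conclude that $S$ contains an \emph{odd} number of involutions, not a unique one: the possibility that all three elements of $S$ have order~$2$ is real, and your derivation, which assumes $S = \{t, s, -s\}$ with $|s| \ge 3$, silently skips it. In the skipped case $G = \langle S \rangle$ is elementary abelian of rank $2$ or~$3$, which yields only $K_4 = M_4$ (for $\ZZ_2 \times \ZZ_2$) and the bipartite cube $C_4 \cartprod K_2$ (for $\ZZ_2^3$); both already appear in your final list of Moebius ladders and prisms, so the classification you state, and everything downstream of it, remains correct. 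Add one sentence disposing of the three-involution case and the proof is complete.
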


\begin{thm}[Hujdurović-Mitrović-Morris {\cite[Thm.~4.3]{HMMlow}}] \label{val4circulant}
A circulant graph\/ $\Cay(\ZZ_n, \{\pm a, \pm b\})$ of valency~$4$ is unstable if and only if either it is trivially unstable, or one of the following conditions is satisfied\/ \textup(perhaps after interchanging $a$ and~$b$\textup)\textup:
\noprelistbreak
	\begin{enumerate}
	\item \label{val4circulant-gcd}
	$n$ is divisible by~$8$ and $\gcd \bigl( |a|, |b| \bigr) = 4$,
	or
	\item \label{val4circulant-k2=1}
	$n \equiv 2 \pmod{4}$, $\gcd(b,n) = 1$, and $a \equiv \ell b + (n/2) \pmod{n}$, for some $\ell \in \ZZ$, such that $\ell^2 \equiv \pm1 \pmod{n}$.
	\end{enumerate}
\end{thm}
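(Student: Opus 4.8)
The plan is to pass to the canonical double cover and control its automorphisms directly. Write $X = \Cay(\ZZ_n; \{\pm a, \pm b\})$; since $BX \iso \Cay\bigl(\ZZ_n \times \ZZ_2; \{(\pm a,1),(\pm b,1)\}\bigr)$, the graph $X$ is stable precisely when every automorphism of $BX$ lies in the obvious subgroup $\Aut X \times S_2$. It is convenient to phrase this in terms of two-fold automorphisms: a pair $(\alpha,\beta)$ of permutations of $\ZZ_n$ with $u - v \in \{\pm a,\pm b\} \iff \alpha(u) - \beta(v) \in \{\pm a,\pm b\}$ for all $u,v$. Since $X$ is connected and nonbipartite, these are exactly the automorphisms of $BX$ that preserve the two parts $V(X) \times \{0\}$ and $V(X) \times \{1\}$, and $X$ is stable if and only if every such pair has $\alpha = \beta \in \Aut X$. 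I would first record the elementary criteria for trivial instability in terms of $n$, $a$, $b$: $X$ is disconnected iff $\gcd(a,b,n) > 1$; it is bipartite (and hence, being vertex-transitive, nontrivially unstable) iff $n$ is even and $a$, $b$ are both odd; and it has twin vertices iff $\{\pm a,\pm b\}$ is invariant under translation by some nonzero element of $\ZZ_n$, which is an easy finite check. By \cref{odd}, a graph of odd order is never nontrivially unstable, and neither \pref{val4circulant-gcd} nor \pref{val4circulant-k2=1} can hold for odd~$n$, so it suffices to treat even~$n$ and to prove that, for graphs that are not trivially unstable, instability is equivalent to \pref{val4circulant-gcd} or \pref{val4circulant-k2=1}.

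For the direction ``$(\Leftarrow)$'' I would exhibit in each case an explicit \emph{affine} two-fold automorphism of the shape $\alpha\colon x \mapsto ux$ and $\beta\colon x \mapsto ux + c$, where $u \in \ZZ_n^{\times}$ satisfies $u^2 \equiv \pm 1 \pmod n$ and $c \in \ZZ_n$; compatibility then collapses to the single identity $u\{\pm a,\pm b\} + c = \{\pm a,\pm b\}$, and since $u\{\pm a,\pm b\} \neq \{\pm a,\pm b\}$ the map $x \mapsto ux$ fails to be an automorphism of~$X$, so $(\alpha,\beta) \notin \Aut X \times S_2$. For \pref{val4circulant-k2=1}, after applying the automorphism $x \mapsto b^{-1}x$ of~$X$ (legitimate because $\gcd(b,n) = 1$) we may assume $b = 1$ and $a \equiv \ell + n/2$, and then $u = \ell$ and $c = n/2$ work, the verification using $\ell^2 \equiv \pm 1 \pmod n$ and $n \equiv 2 \pmod 4$. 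For \pref{val4circulant-gcd} the hypotheses $8 \mid n$ and $\gcd(|a|,|b|) = 4$ are precisely what is needed to choose such a unit~$u$ (one congruent to $\pm 1$ modulo a suitable proper divisor of~$n$) together with a corresponding~$c$ that is a multiple of~$n/2$; this is a short computation with the $2$-adic valuations of $a$, $b$, and~$n$.

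The direction ``$(\Rightarrow)$'' is the substantive one. Starting from a two-fold automorphism $(\alpha,\beta)$ with $\alpha \notin \Aut X$ or $\alpha \neq \beta$, I would normalize so that $\alpha(0) = 0$ by post-composing with a translation. Then $\beta$ permutes the connection set $S = \{\pm a,\pm b\}$ and $\alpha(S + s) = S + \beta(s)$ for every $s \in S$; since $X$ is connected, these relations propagate outward, and the consistency conditions forced around the $4$-cycles $\{0, a, a+b, b\}$ leave only finitely many possibilities for $(\alpha,\beta)$. The heart of the proof is the resulting case analysis, organized by the permutation $\beta \in \operatorname{Sym}(S)$, by $\gcd(a,n)$ and $\gcd(b,n)$, and by the $2$-adic valuation of~$n$, and using the symmetry $a \leftrightarrow b$ to reduce the number of cases. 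Each branch either forces $\alpha = \beta \in \Aut X$, contradicting the choice of $(\alpha,\beta)$, or leads to the affine situation of the previous paragraph, in which the requirement that the twisted map be well defined on all of $\ZZ_n$ yields exactly a congruence $\ell^2 \equiv \pm 1 \pmod n$ with $n \equiv 2 \pmod 4$ \pref{val4circulant-k2=1}, while the consistency conditions around the $4$-cycles yield $8 \mid n$ together with $\gcd(|a|,|b|) = 4$ \pref{val4circulant-gcd}.

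The main obstacle is precisely this case analysis, and two points demand care. First, one must rule out two-fold automorphisms that are not affine: a priori an automorphism of a tetravalent abelian Cayley graph need not be affine, so one has to use connectedness and the small valency to show that propagating the relations $\alpha(S+s) = S + \beta(s)$ pins $\alpha$ and~$\beta$ down rigidly; the delicate configurations are the degenerate ones in which $a$ and~$b$ are entangled modulo small divisors of~$n$. Second, the conditions that emerge must be shown to be exactly \pref{val4circulant-gcd} and~\pref{val4circulant-k2=1}: the trivially unstable graphs (disconnected, bipartite, or with twin vertices) have to be separated off cleanly at each step, so that no spurious family survives and no genuinely nontrivially unstable graph of small order is overlooked, and the two families must be checked to carry no hidden extra constraints and to overlap only as claimed.
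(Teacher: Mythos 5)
First, a point of orientation: the paper does not prove \cref{val4circulant} at all --- it is quoted verbatim from \cite[Thm.~4.3]{HMMlow} and used as a black box in \cref{val4pf-triv,val4-cyclic} of the proof of \cref{val4}. So there is no in-paper proof to compare against; the relevant comparison is with the strategy the paper uses for its generalization \cref{val4}, which does go through the double cover as you propose, but whose engine is \cref{4cyclenormal} (Baik--Feng--Sim--Xu): unless there exist $s,t,u,v\in S$ with $s+t=u+v\neq 0$ and $\{s,t\}\neq\{u,v\}$, every automorphism of $BX$ fixing a vertex is a \emph{group} automorphism of $\ZZ_n\times\ZZ_2$, after which the problem becomes pure arithmetic (an isomorphism $\Cay(G;S)\iso\Cay(G;S+\invol)$ as in \cref{IsoS+z}), and the exceptional configurations $3b=\pm a$, $2a=2b$, etc.\ are disposed of separately.

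The genuine gap in your proposal is the forward direction, which you yourself identify as ``the heart of the proof'' and then do not carry out. The assertion that propagating $\alpha(S+s)=S+\beta(s)$ around the $4$-cycles ``leaves only finitely many possibilities for $(\alpha,\beta)$'' is not substantiated and is not true in any useful literal sense: $\Aut BX$ can be large, and the whole difficulty is to show that a vertex-stabilizing automorphism of $BX$ is forced to be affine (equivalently, a group automorphism composed with a translation). That rigidity is exactly what \cref{4cyclenormal} supplies, under a hypothesis on $S$ that \emph{fails} in some of the cases you must handle (e.g.\ $2a=2b$, or $3b=\pm a$, or $|a|=4$), and those degenerate cases require separate ad hoc arguments --- they are where the families \pref{val4circulant-gcd} and \pref{val4circulant-k2=1} and the twin-vertex examples actually come from. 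Without a rigidity input of this kind, your case analysis has no finite list of cases to analyze. Secondary but nontrivial gaps: the ($\Leftarrow$) verification for \pref{val4circulant-gcd} is waved off as ``a short computation with $2$-adic valuations'' (the paper's proof of the corresponding part \pref{val4-int4} of \cref{val4unstable} is a genuine half-page argument producing the automorphism $a\mapsto -a+\invol$, $b\mapsto b+\invol$ and invoking \cref{IsoS+z}); and your claim that $u S\neq S$, needed to conclude $(\alpha,\beta)\notin\Aut X\times S_2$, is asserted rather than checked in the edge cases (when $uS=S$ one must instead argue that $\beta\neq\alpha$, or fall back on a counting argument as in \cref{val4pf-aut}). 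As it stands the proposal is a plausible outline of the published proof's architecture, not a proof.
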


\begin{thm}[Hujdurović-Mitrović-Morris {\cite[Thm.~5.1]{HMMlow}}] \label{val5circulant}
A circulant graph\/ $\Cay(\ZZ_n; S)$ of valency~$5$ is unstable if and only if either it is trivially unstable, or it is either:
\noprelistbreak
	\begin{enumerate}
	\item \label{val5circulant-12k}
	$\Cay(\ZZ_{12k}; \pm s, \pm 2k, 6k )$ with $s$~odd, 
	or
	\item \label{val5circulant-8}
	$\Cay(\ZZ_8; \pm 1, \pm 3, 4 )$
	\end{enumerate}
\end{thm}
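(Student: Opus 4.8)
\textit{Setup.} Since the valency $5$ is odd and $S = -S$ with $0 \notin S$, the connection set must contain exactly one element of order~$2$; hence $n$ is even and
\[ S = \{\pm a, \pm b,\, n/2\} \qquad\text{for suitable } a, b \in \ZZ_n . \]
For the direction $(\Leftarrow)$ — the part Wilson already established — I would, for each graph in the list, write down an explicit automorphism of the canonical double cover $BX$ that does not lie in $\Aut X \times S_2$: for the family $\Cay(\ZZ_{12k}; \pm s, \pm 2k, 6k)$ (with $s$ odd) this is a Wilson-type ``twist'' keyed to the parity of the elements of~$S$ (only $\pm s$ is odd), and for the single graph $\Cay(\ZZ_8; \pm 1, \pm 3, 4)$ it is a finite verification. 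The substance of the theorem is the converse, so suppose from now on that $X = \Cay(\ZZ_n; S)$, with $S$ as above, is nontrivially unstable.

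\textit{Reformulation, and eliminating affine maps.} Since $X$ is connected, nonbipartite and twin-free, $BX$ is connected and bipartite with the two fibres $\ZZ_n\times\{0\}$ and $\ZZ_n\times\{1\}$ as its colour classes, and $\Aut BX \supsetneq \Aut X \times S_2$. Equivalently, there is a \emph{two-fold automorphism}: a pair $(\sigma,\tau)$ of permutations of $\ZZ_n$ with
\[ x - y \in S \iff \sigma(x) - \tau(y) \in S \qquad (x, y \in \ZZ_n) , \]
that is not of the diagonal form $\sigma = \tau \in \Aut X$. Translations give the diagonal copy of $\ZZ_n$ in the group of two-fold automorphisms, so, after composing with one, I may assume $\sigma(0) = 0$. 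I would first dispose of affine possibilities: if $\sigma(x) = \lambda x$ and $\tau(x) = \mu x$ with $\lambda, \mu$ units, the two-fold condition forces $S$ to be a union of cosets of the subgroup $\langle 1 - \lambda^{-1}\mu \rangle$; as $|S| = 5$, that subgroup is trivial (forcing $\sigma = \tau$) or has order~$5$, and in the latter case $S$ is a single coset of an order-$5$ subgroup, which makes $X$ a nontrivial blow-up and hence gives twin vertices — contrary to hypothesis. So the two-fold automorphism witnessing instability is genuinely non-affine.

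\textit{The heart of the argument.} To analyze a non-affine $(\sigma,\tau)$, I would pass to the quotient by the order-$2$ subgroup $\langle n/2\rangle$. Using that $n/2$ is the \emph{unique} involution in~$S$, one argues that $\sigma$ and $\tau$ respect the cosets of $\langle n/2\rangle$, so they induce permutations $\bar\sigma,\bar\tau$ of $\ZZ_{n/2}$ such that $(\bar\sigma,\bar\tau)$ is a two-fold automorphism of the circulant $\Cay(\ZZ_{n/2}; \pm\bar a, \pm\bar b)$ (of valency at most $4$); \cref{val4circulant} then restricts the reduced parameters to a short list of shapes. Feeding this back — and tracking how $(\sigma,\tau)$ moves the ``vertical'' involution edges $\{x, x + n/2\}$ between the two fibres — pins everything down: the generic shape forces $n = 12k$ and $\{a,b\}=\{s, 2k\}$ with $s$ odd, while the finitely many degenerate shapes ($n$ small, or where the quotient argument collapses) are finished by direct computation, leaving only $\Cay(\ZZ_8; \pm1, \pm3, 4)$.

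\textit{The main obstacle.} I expect the real difficulty to be exactly the non-affine case: there is no shortcut through CI-type normality theorems, so the needed rigidity of $(\sigma,\tau)$ must be extracted by hand from the local structure of $BX$, and the bookkeeping is delicate because coincidences among $a$, $b$, $a\pm b$ and $n/2$ alter the girth and the triangle/quadrilateral structure and hence split the proof into many subcases. I would keep this finite by first reducing, via the obvious symmetries (swapping $a\leftrightarrow b$, negating), to a bounded list of possibilities for the pair $\bigl(\gcd(a,n),\gcd(b,n)\bigr)$ and then dispatching each; once the structure is in hand, the closing arithmetic is routine.
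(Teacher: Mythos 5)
This statement is not proved in the paper at all: it is quoted verbatim from Hujdurovi\'c--Mitrovi\'c--Morris \cite[Thm.~5.1]{HMMlow} and used as a black box (in the proofs of \cref{SmallZnxZ2}, \cref{val6abcCyclic} and \cref{val6unstable}). So there is no internal proof to compare yours against; what you have written is an outline of a proof of the cited result, and it has a genuine gap at its core. The crucial unproved assertion is that a non-affine two-fold automorphism $(\sigma,\tau)$ ``respects the cosets of $\langle n/2\rangle$'' and hence descends to a two-fold automorphism of the valency-$\le 4$ quotient circulant. In $BX$ the generator $n/2$ contributes a perfect matching, and nothing forces an automorphism of $BX$ fixing $(0,0)$ to preserve that matching setwise: distinguishing a matching edge from an $\pm a$- or $\pm b$-edge requires a local combinatorial argument (common neighbours, $4$-cycles or triangles through an edge), and that argument breaks down exactly when there are additive coincidences among $a$, $b$, $a\pm b$ and $n/2$. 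Those coincidences are not a fringe of ``finitely many degenerate shapes'' to be ``finished by direct computation'' --- they are where all the unstable graphs live, including the infinite family $\Cay(\ZZ_{12k};\pm s,\pm 2k,6k)$, whose parameters ($n=12k$, the generator $2k$ of order $6$) you assert but never derive. As written, the sketch proves the theorem only in the generic case, where the conclusion is that $X$ is stable; the substance of the classification is missing.

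Two smaller points. First, in the affine elimination you conclude that $S$ is a union of cosets of a nontrivial subgroup, hence a single coset of an order-$5$ subgroup; but $S$ contains the involution $n/2$ and satisfies $S=-S$, and a symmetric single coset of an odd-order subgroup containing an involution already forces a contradiction more directly --- more importantly, normalizing $\sigma(0)=0$ does not normalize $\tau(0)$, so the affine case needs the translation part of $\tau$ carried along. Second, the ($\Leftarrow$) direction is not ``a Wilson-type twist'' to be waved at: for the family in part \pref{val5circulant-12k} one must actually exhibit the extra automorphism of $BX$ (in \cite{HMMlow} this is done via an explicit isomorphism $\Cay(\ZZ_{12k};S)\iso\Cay(\ZZ_{12k};S+6k)$ in the spirit of \cref{IsoS+z}, using that $\pm s$ is odd while $\pm 2k$ and $6k$ are even), and verifying that these graphs are not already trivially unstable is part of the statement. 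If you intend to reprove \cite[Thm.~5.1]{HMMlow}, the place to start is the matching-recognition lemma described above, stated with an explicit list of the parameter coincidences under which it fails.
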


\begin{thm}[Hujdurović-Mitrović-Morris {\cite[Cor.~6.8]{HMMlow}}] \label{val6circulant}
A circulant graph 
	\[ X = \Cay(\ZZ_n, \{\pm a, \pm b, \pm c \}) \]
of valency\/~$6$ is unstable if and only if either it is trivially unstable, or it is one of the following:
\noprelistbreak
	\begin{enumerate}
	
	\setcounter{enumi}{0}
	
		\item \label{val6circulant-C1Se2}
		$\Cay(\ZZ_{8k},\{\pm a, \pm b, \pm 2k\})$, where $a$ and~$b$ are odd, 
	
		\item \label{val6circulant-C1Se4}
		$\Cay(\ZZ_{4k},\{\pm a, \pm b, \pm b + 2k\})$, where $a$ is odd and $b$ is even, 
		
		\item \label{val6circulant-C2}
		$\Cay \bigl( \ZZ_{4k}, \bigl\{ \pm a, \pm (a + k), \pm(a - k) \bigr\} \bigr)$, where $a \equiv 0 \pmod{4}$ and $k$ is odd, 
		
 		\item \label{val6circulant-C3order2}
                $\Cay(\ZZ_{8k},\{\pm a, \pm b, 4k \pm b\})$, where $a$ is even and $|a|$ is divisible by~$4$, 

        \item \label{val6circulant-C3order4}
                $\Cay(\ZZ_{8k},\{\pm a, \pm k, \pm 3k\})$, where $a \equiv 0 \pmod{4}$ and $k$ is odd, 

        		\item \label{val6circulant-C4}
		$\Cay(\ZZ_{4k},\{\pm a, \pm b, \pm m b + 2k \})$, where 
		\[ \text{$\gcd(m, 4k) = 1$, \quad $(m-1)a \equiv 2k \pmod{4k}$, \quad and} \]
		\[ \text{either \ 
			$m^2 \equiv 1 \pmod{4k}$
			\ or \ 
			$(m^2 + 1) b \equiv 0 \pmod{4k}$,} \]
		
		 \item \label{val6circulant-C4more}
           $\Cay(\ZZ_{8k},\{\pm a, \pm b, \pm c \})$, where there exists $m \in \ZZ$, such that
           \[ \text{$\gcd(m, 8k) = 1$, \quad $m^2 \equiv 1 \pmod{8k}$, \quad and} \]
        \[ (m-1)a \equiv (m + 1)b \equiv (m + 1)c \equiv 4k \pmod{8k} . \]
		  
	\end{enumerate}
\end{thm}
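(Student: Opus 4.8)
The plan is to reduce to the case that $n$ is even, reformulate instability as the existence of a nontrivial ``two-fold automorphism'', and then run a case analysis driven by the parities and orders of the three generators. Since a nontrivially unstable graph is connected, nonbipartite and twin-free, \cref{odd} lets me assume from the start that $n$ is even. I will use the standard fact (implicit throughout \cite{HMMlow}) that such an $X$ is unstable if and only if its canonical double cover $BX = \Cay(\ZZ_n \times \ZZ_2;\, S \times \{1\})$ has an automorphism preserving each of the two parts $\ZZ_n \times \{0\}$ and $\ZZ_n \times \{1\}$ that is not of the form $(v,i) \mapsto (\alpha v, i)$ for a single $\alpha \in \Aut X$; equivalently, there is a pair $(\sigma,\tau)$ of permutations of $\ZZ_n$ with $\sigma \neq \tau$ and $v - w \in S \iff \sigma v - \tau w \in S$ for all $v,w \in \ZZ_n$. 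After composing with a translation of $BX$ I may assume $\sigma(0) = 0$; then $\tau$ permutes $S$ and $\sigma(S) = S + \tau(0)$ is a translate of $S$. Finally, since $X$ is connected and $n$ is even, $\gcd(a,b,c,n) = 1$ forces at least one of $a,b,c$ to be odd.

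For the direction ($\Leftarrow$) I exhibit, for each of the seven families, an explicit two-fold automorphism. In every case $X$ carries a natural $2$-to-$1$ (or $4$-to-$1$) covering projection onto a smaller circulant, obtained by factoring out the subgroup $\langle n/2 \rangle$ of order~$2$ or a cyclic subgroup of order~$4$; I take $\sigma$ to be the identity (or, in the families involving a multiplier~$m$, the affine map $v \mapsto mv$ followed by the appropriate fiberwise shift) and let $\tau$ shift each fiber of this projection by a prescribed element. The parity hypotheses on $a,b,c$, the congruences $m^2 \equiv 1$, and the relations such as $(m-1)a \equiv 2k$ and $(m\pm 1)b \equiv \cdots$ are exactly what is needed to make $\tau(S) = S$ and to make the identity $v - w \in S \iff \sigma v - \tau w \in S$ hold on all of $\ZZ_n$; and $\sigma \neq \tau$ because a failure would produce twin vertices or a bipartition, both of which are excluded. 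This is the construction side of Wilson's conditions C1--C4, refined as needed to also cover the extra sub-families \pref{val6circulant-C3order2} and \pref{val6circulant-C4more}.

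For the direction ($\Rightarrow$) I assume $X$ is nontrivially unstable, fix $(\sigma,\tau)$ with $\sigma \neq \tau$ as above, and split into cases according to how many of $a,b,c$ are odd and, within each, according to whether some generator is the unique involution $n/2$ or has order~$4$---this is what produces the labels C1Se2, C1Se4, C2, C3order$2$, C3order$4$, C4, C4more. In each case the program is: (i) propagating the identity $v - w \in S \iff \sigma v - \tau w \in S$ along a walk generating $\ZZ_n$, and using that $\tau$ permutes $S$ while $\sigma(S)$ is a translate of $S$, show that $\sigma$ and $\tau$ must be \emph{generalized affine}---an affine map $v \mapsto uv + t$ (with $\gcd(u,n)=1$), possibly twisted by a group homomorphism of $\ZZ_n$ onto a subgroup of order~$2$ or~$4$---with the twisting subgroup and the admissible shifts pinned down by a congruence analysis modulo~$2$ and~$4$; (ii) substitute these forms back into the identity, read off the resulting arithmetic constraints on $a,b,c,n$, and check that they collapse to exactly one of the seven listed families, none of which is already forced to be stable, trivially unstable, or an instance of \cref{val4circulant,val5circulant} applied to a proper generating subset of~$S$. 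The congruences defining \pref{val6circulant-C4} and \pref{val6circulant-C4more} are precisely the compatibility conditions that emerge at step~(ii).

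The main obstacle is step~(i) in the mixed-parity cases, where the two-fold automorphism genuinely need not be affine on all of $\ZZ_n$: one must show the fiberwise shifts are rigid enough to leave only finitely many possibilities, while simultaneously tracking the degenerate configurations---repeated or accidentally coincident generators, twin vertices, or a hidden bipartition of a quotient---that would make a candidate two-fold automorphism either trivial ($\sigma = \tau$) or nonexistent. Separating the genuinely new instability phenomena from this web of degeneracies, and verifying exhaustiveness of the resulting list, is the technical heart of the argument.
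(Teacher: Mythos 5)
First, a point of comparison: the paper does not prove this statement at all --- it is imported verbatim from \cite{HMMlow}, where the proof occupies a substantial portion of that paper. So the only question is whether your proposal stands as a self-contained proof, and it does not: it is a strategy outline whose load-bearing steps are asserted rather than established. In the ($\Leftarrow$) direction you claim to ``exhibit, for each of the seven families, an explicit two-fold automorphism,'' but no map is actually written down, and none of the listed congruences is verified to make the adjacency condition $v - w \in S \iff \sigma v - \tau w \in S$ hold. That verification is precisely where hypotheses such as $(m-1)a \equiv 2k \pmod{4k}$ and $m^2 \equiv 1 \pmod{4k}$ earn their keep, and it cannot be waved through; moreover the claim that $\sigma \neq \tau$ ``because a failure would produce twin vertices or a bipartition'' is backwards --- one must check that the constructed pair is genuinely new, not deduce it from the graph being twin-free.

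The ($\Rightarrow$) direction has the more serious gap. Step~(i) asserts that any bipartition-preserving automorphism pair $(\sigma,\tau)$ ``must be generalized affine,'' with the twisting data ``pinned down by a congruence analysis modulo~2 and~4.'' That assertion \emph{is} the theorem's difficulty. Automorphisms of $BX$ fixing a vertex need not be group automorphisms --- this is exactly why the present paper must invoke \cref{4cyclenormal} and still treat the non-affine possibilities in separate cases of its own valency-$4$ and valency-$6$ arguments --- and ``propagating the identity along a walk'' does not by itself force affineness; ruling out non-affine automorphisms, and proving that the surviving affine ones yield only the seven listed families, is what consumes most of \cite{HMMlow}. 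You concede as much in your final paragraph by calling exhaustiveness ``the technical heart of the argument,'' which is to say the heart is missing. As an outline of the known approach the sketch points in the right general direction, but as a proof it establishes neither direction of the equivalence.
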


\subsection{Criteria for stability or instability}

\begin{lem}[cf.\ {\cite[Lem.~2.4]{FernandezHujdurovic}}] \label{StableIffStabilizer}
A connected, abelian Cayley graph $X = \Cay(G; S)$ is unstable if and only if there exists $\alpha \in \Aut BX$, such that $\alpha(0,0) = (0,0)$, but $\alpha(0,1) \neq (0,1)$.
\end{lem}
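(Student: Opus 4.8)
The plan is to exploit the canonical action of $\Aut X \times S_2$ on $BX$ together with vertex-transitivity of the Cayley graph to reduce the question of stability to the behaviour of automorphisms fixing the base vertex $(0,0)$. First I would recall the easy direction: if $X$ is stable, then $\Aut BX = \Aut X \times S_2$, and I claim no $\alpha$ in this group can fix $(0,0)$ while moving $(0,1)$. Indeed, writing $\alpha = (\varphi, \sigma)$ with $\varphi \in \Aut X$ and $\sigma \in S_2$, the condition $\alpha(0,0) = (0,0)$ forces $\varphi(0) = 0$ and $\sigma(0) = 0$, hence $\sigma = \mathrm{id}$, and then $\alpha(0,1) = (\varphi(0), \sigma(1)) = (0,1)$. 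This proves the contrapositive of one implication.

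For the converse, suppose $X$ is unstable, so there exists $\beta \in \Aut BX \setminus (\Aut X \times S_2)$. The key step is to \emph{normalize} $\beta$ using the translations of the Cayley graph. Since $X = \Cay(G;S)$ is abelian Cayley, every $g \in G$ gives a graph automorphism $x \mapsto x + g$, and this lifts to an automorphism $(x,i) \mapsto (x+g,i)$ of $BX$ lying in $\Aut X \times S_2$; likewise the ``swap'' $(x,i)\mapsto(x,1-i)$ lies in $\Aut X \times S_2$. Because $BX$ is connected (this is where the hypothesis that $X$ is connected and the structure of $BX$ under connectedness enter — if $X$ is connected and nonbipartite then $BX$ is connected, and in the bipartite case one argues separately or notes the statement still holds via disconnectedness giving instability directly), the group $\Aut BX$ acts transitively on $V(BX)$, so after composing $\beta$ with a translation and possibly the swap — both of which lie in $\Aut X \times S_2$ — we may assume $\beta(0,0) = (0,0)$. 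The composed map $\alpha := \beta \circ (\text{those elements of }\Aut X\times S_2)$ still fails to lie in $\Aut X \times S_2$, since $\Aut X \times S_2$ is a subgroup and $\beta \notin \Aut X \times S_2$.

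It then remains to show $\alpha(0,1) \neq (0,1)$. Suppose for contradiction that $\alpha$ fixes both $(0,0)$ and $(0,1)$. One shows that such an $\alpha$ is forced to preserve each ``fibre'' $\{(x,0) : x \in G\}$ and $\{(x,1) : x \in G\}$ setwise: starting from $(0,0)$ and $(0,1)$ and using that $\alpha$ is a graph automorphism, one propagates along paths in the connected graph $BX$, observing that neighbours of a vertex in the $0$-fibre lie in the $1$-fibre and vice versa, so the partition into fibres is respected. Hence $\alpha$ induces permutations $\alpha_0, \alpha_1$ of $G$ with $\alpha(x,i) = (\alpha_i(x), i)$; the adjacency condition forces $\alpha_0(x) + s \mapsto$ a neighbour of $\alpha_0(x)$ in the $1$-fibre precisely when $x + s$ is, which makes $\alpha_0 = \alpha_1$ a graph automorphism of $X$ fixing $0$, and then $\alpha = (\alpha_0, \mathrm{id}) \in \Aut X \times S_2$, contradicting $\alpha \notin \Aut X \times S_2$. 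The main obstacle is this last step — carefully verifying that fixing the two base vertices actually forces the fibre-preserving structure — which requires the connectivity of $BX$ and a short argument that $\alpha_0$ and $\alpha_1$ coincide and define a genuine automorphism of $X$; the cited \cite[Lem.~2.4]{FernandezHujdurovic} handles the analogous point and I would follow that line.
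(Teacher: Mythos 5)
The paper offers no proof of this lemma (it is quoted from \cite[Lem.~2.4]{FernandezHujdurovic}), so I can only assess your argument on its own terms. Your easy direction is correct, but the forward direction has a genuine gap. You take an arbitrary $\beta \in \Aut BX \setminus (\Aut X \times S_2)$, normalize it by translations to an $\alpha$ fixing $(0,0)$, and then claim $\alpha(0,1) \neq (0,1)$ on the grounds that an automorphism fixing both base vertices must satisfy $\alpha_0 = \alpha_1$ and hence lie in $\Aut X \times S_2$. That claim is false: the adjacency condition only gives $\alpha_1(y) - \alpha_0(x) \in S \iff y - x \in S$, which does not force $\alpha_0 = \alpha_1$. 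Concretely, let $X = \Cay(\ZZ_8; \pm 1, \pm 2, \pm 3)$, which is connected and nonbipartite and satisfies $S + 4 = S$, and let $\sigma$ be the transposition of $\ZZ_8$ swapping $1$ and $5$. Then the map $(x,0) \mapsto (x,0)$, $(y,1) \mapsto (\sigma(y),1)$ is an automorphism of $BX$ (because $\sigma(y) \in \{y, y+4\}$ and $y - x \in S \iff y \notin \{x, x+4\}$) that fixes both $(0,0)$ and $(0,1)$ yet does not lie in $\Aut X \times S_2$, since it acts differently on the two fibres. So your normalized $\alpha$ may well fix $(0,1)$, and your argument then produces no witness even though the lemma's conclusion is still true for this graph (e.g.\ $(x,0)\mapsto(x,0)$, $(y,1)\mapsto(y+4,1)$ is a witness).

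The standard repair is to prove the contrapositive and apply the hypothesis to \emph{all} elements of the stabilizer, not to a single normalized one. Assume every $\alpha \in \Aut BX$ with $\alpha(0,0) = (0,0)$ also fixes $(0,1)$. (This forces $X$ to be nonbipartite, since otherwise the component of $(0,1)$ in $BX$ is disjoint from that of $(0,0)$ and admits an automorphism moving $(0,1)$; so $BX$ is connected and every such $\alpha$ preserves the fibres.) Given such an $\alpha$ and $g \in G$, write $\alpha(g,0) = (g',0)$ and conjugate by translations: $\tau_{(-g',0)} \circ \alpha \circ \tau_{(g,0)}$ again fixes $(0,0)$, hence by hypothesis fixes $(0,1)$, which says precisely that $\alpha(g,1) = (g',1)$. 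Thus $\alpha$ acts by the same permutation $\varphi$ on both fibres, the adjacency relation shows $\varphi \in \Aut X$, and so $\alpha \in \Aut X \times S_2$. Since the translations by $G \times \ZZ_2$ lie in $\Aut X \times S_2$ and act transitively on $V(BX)$, the group $\Aut BX$ is generated by them together with the stabilizer of $(0,0)$, whence $\Aut BX = \Aut X \times S_2$ and $X$ is stable. The essential ingredient you are missing is this conjugation step, which converts the pointwise hypothesis at $(0,0)$ and $(0,1)$ into the global statement that $\alpha$ acts identically on the two fibres.
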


\begin{rem} \label{twiniff}
It is easy to see (and well known) that an abelian Cayley graph $\Cay(G; S)$ has twin vertices if and only if $S + \twin = S$, for some nonzero $\twin \in G$. In other words, $S$ is a union of cosets of~$\langle \twin \rangle$. By passing to a subgroup of~$\langle \twin \rangle$, there is no harm in assuming that $|\twin|$ is prime.
\end{rem}

The following result was stated only for circulant graphs in~\cite[Prop.~3.7]{HMMaut} (which is a slight generalization of \cite[Thm.~C.4]{Wilson}), but the same proof applies more generally. (In fact, the proof even applies without the assumption that $\invol$ has order~$2$, if $S + \invol = -(S + \invol)$ is symmetric. And there is no need for $G$ to be abelian.)

\begin{lem}[{\cite[Prop.~3.7]{HMMaut}, cf.\ \cite[Thm.~C.4]{Wilson}}] \label{IsoS+z}
An abelian Cayley graph $\Cay(G;S)$ is unstable if\/ $\Cay(G; S) \iso \Cay(G; S + \invol)$, for some element~$\invol$ of order~$2$ in~$G$.
\end{lem}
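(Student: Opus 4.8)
The plan is to exhibit an automorphism $\alpha$ of $BX$ fixing $(0,0)$ but moving $(0,1)$, and then invoke \cref{StableIffStabilizer}. Suppose $\varphi \colon \Cay(G;S) \to \Cay(G;S+\invol)$ is a graph isomorphism; by composing with a translation, we may assume $\varphi(0) = 0$. The key structural observation is that $BX = B\Cay(G;S)$ can be described concretely on the vertex set $G \times \{0,1\}$, and that there is a natural identification of $B\Cay(G;S)$ with $B\Cay(G;S+\invol)$: the map sending a vertex $(g,0)$ to $(g,0)$ and $(g,1)$ to $(g+\invol,1)$ (equivalently $(g,1)\mapsto(g-\invol,1)$, since $2\invol = 0$) carries edges of $B\Cay(G;S)$ to edges of $B\Cay(G;S+\invol)$, because $(g,0)\sim(h,1)$ in the former iff $g - h \in S$ iff $g - (h+\invol) \in S + \invol$. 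Call this isomorphism $\beta \colon B\Cay(G;S) \to B\Cay(G;S+\invol)$.

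Next I would promote the graph isomorphism $\varphi$ to an isomorphism $B\varphi \colon B\Cay(G;S+\invol) \to B\Cay(G;S)$ of double covers in the obvious way, $(B\varphi)(g,i) = (\varphi(g), i)$; this is a graph isomorphism precisely because $\varphi$ is one, and it fixes $(0,0)$ since $\varphi(0)=0$. Composing, set $\alpha = (B\varphi)\circ\beta \in \Aut BX$. Then $\alpha(0,0) = (B\varphi)(\beta(0,0)) = (B\varphi)(0,0) = (0,0)$, while $\alpha(0,1) = (B\varphi)(\beta(0,1)) = (B\varphi)(\invol,1) = (\varphi(\invol),1)$. So it remains only to check that $\varphi(\invol) \neq 0$: but $\varphi$ is injective and $\varphi(0)=0$, and $\invol \neq 0$ by hypothesis, so indeed $\varphi(\invol)\neq 0 = \varphi(0)$, giving $\alpha(0,1)\neq(0,1)$. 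By \cref{StableIffStabilizer}, $\Cay(G;S)$ is unstable.

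I do not expect any serious obstacle here; the only point requiring a little care is verifying that $\beta$ really is an isomorphism of the relevant bipartite double covers, i.e.\ that the edge condition transforms correctly — and as noted this reduces to the trivial equivalence $g-h\in S \iff g-h-\invol \in S+\invol$, which uses nothing about $\invol$ beyond $S+\invol = -(S+\invol)$ (automatic when $\invol$ has order $2$ and $S = -S$), so the argument is essentially bookkeeping. One should also note that connectedness of $\Cay(G;S)$ is not actually needed for this direction — it enters only through the "only if" half of \cref{StableIffStabilizer}, whereas here we use the "if" half — but stating the lemma as an equivalence does no harm.
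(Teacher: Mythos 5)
The paper does not actually prove this lemma; it cites \cite[Prop.~3.7]{HMMaut} and remarks that the same proof applies. Your argument is the standard one and is correct in substance, with one direction slip: since $\varphi$ maps $\Cay(G;S)$ to $\Cay(G;S+\invol)$, the lift $(g,i)\mapsto(\varphi(g),i)$ goes from $B\Cay(G;S)$ to $B\Cay(G;S+\invol)$, not the reverse as you wrote; so you should either compose as $\beta^{-1}\circ(B\varphi)$ (which sends $(0,1)\mapsto(\varphi(0),1)=(0,1)\mapsto(\invol,1)\neq(0,1)$) or replace $\varphi$ by $\varphi^{-1}$ in your composition, after which everything goes through verbatim. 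Your side remarks --- that only the easy half of \cref{StableIffStabilizer} is used, so connectedness is not essential --- are also correct.
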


\begin{lem}[Wilson {\cite[\S2.2]{Wilson}}] \label{LargelyFixing}
A graph~$X$ is unstable if it has an automorphism~$\alpha$, such that the subgraph induced by the set of un-fixed vertices is disconnected and has a component~$C$, such that $C$ is bipartite, and either $\alpha(C) \neq C$ or each of the two bipartition sets of~$C$ is $\alpha$-invariant.
\end{lem}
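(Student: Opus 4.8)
The final statement to be proved is \cref{LargelyFixing} (Wilson's criterion). Here is how I would approach it.

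\medskip

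The plan is to construct an explicit nontrivial automorphism of the canonical double cover $BX$ that fixes the ``layer-$0$'' copy of every vertex, so that stability fails. Recall that $V(BX) = V(X) \times \{0,1\}$, and write $u_0 = (u,0)$, $u_1 = (u,1)$. Let $F$ be the set of $\alpha$-fixed vertices of $X$ and let $U = V(X)\setminus F$ be the set of un-fixed vertices; by hypothesis the subgraph $X[U]$ is disconnected with a component $C$ that is bipartite, say with bipartition $C = C' \sqcup C''$, and either $\alpha(C)\ne C$, or $\alpha(C)=C$ with both $C'$ and $C''$ $\alpha$-invariant. The idea is that $BX$ already ``contains two copies of everything,'' and the disconnectedness of $X[U]$ lets us modify the double cover locally around $C$ using $\alpha$ while leaving everything else alone.

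\medskip

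The key construction: define $\beta\colon V(BX)\to V(BX)$ as follows. For a vertex $u\notin C$ (including all of $F$ and all other components of $U$, as well as the vertices un-fixed but outside $C$), set $\beta(u_0)=u_0$ and $\beta(u_1)=u_1$ — i.e. $\beta$ is the identity off the two layers over $C$. On the layers over $C$, use $\alpha$ twisted by the layer swap: in the case $\alpha(C)\ne C$ I would set $\beta(u_0) = u_0$ and $\beta(u_1) = \alpha(u)_1$ for $u\in C$ — wait, that is not an involution unless $\alpha^2 = \mathrm{id}$ on $C$; instead, the clean choice is $\beta(v_0) = \alpha(v)_1$ reinterpreted through the bipartition. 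Concretely, in the bipartite case $C = C'\sqcup C''$ the right move is: for $v \in C'$ put $\beta(v_1) = v_1$ but $\beta(v_0)=v_0$, and for $v\in C''$ swap the two layers, $\beta(v_0)=v_0$ composed appropriately — the point is that one has the freedom, on a bipartite component, to swap the ``colours'' of the double cover independently. More precisely: the canonical double cover restricted to a bipartite piece splits, and I would use the automorphism that is the identity on $C'\times\{0\}\cup C''\times\{1\}$ and swaps layers on $C'\times\{1\}\cup C''\times\{0\}$, composed with the permutation induced by $\alpha$ when $\alpha$ moves $C$ to a different component $\alpha(C)$. The verification then has two parts: (i) $\beta$ is a bijection — clear, since on $C$ it is built from the bijection $\alpha$ and the layer swap; (ii) $\beta$ preserves adjacency in $BX$. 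For an edge of $X$ with both endpoints outside $C$, $\beta$ does nothing and we are fine. For an edge with both endpoints inside $C$: here we use that $C$ is a \emph{component} of $X[U]$ (so edges inside $C$ stay inside $C$) and that $C$ is bipartite, together with the fact that $\alpha$ is a graph automorphism, hence maps edges of $C$ to edges of $\alpha(C)$; a short case check on whether an endpoint lies in $C'$ or $C''$ shows adjacency is preserved because crossing an edge of $C$ flips the bipartition class, which is exactly compensated by the layer-swap rule. The only remaining edges are those joining a vertex $u\in C$ to a vertex $w\notin C$: since $C$ is a component of the subgraph induced by the \emph{un-fixed} vertices, any such $w$ must be a fixed vertex (an edge of $X[U]$ cannot leave $C$), so $\alpha(w)=w$; one then checks that $\beta(u_i)$ is adjacent to $\beta(w_j)=w_j$ using that $u$ and $\alpha(u)$ both lie in $C$ and $w$ is adjacent to $u$ in $X$ — here the hypothesis that $\alpha(u)$ is adjacent to $w=\alpha(w)$ in $X$ is precisely what makes $\beta$ respect this edge. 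Finally, $\beta$ is nontrivial on layer-$1$ vertices over $C$ while fixing layer-$0$ everywhere it matters, so by \cref{StableIffStabilizer} (or directly, since $\beta \notin \Aut X \times S_2$ as it acts inconsistently across the two bipartition classes), $X$ is unstable.

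\medskip

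The main obstacle I anticipate is getting the bookkeeping of the two sub-cases exactly right — especially handling the case $\alpha(C)\ne C$, where $\alpha$ permutes several bipartite components among themselves and one must twist consistently around a whole $\alpha$-orbit of components, and confirming in the case $\alpha(C)=C$ with $C',C''$ invariant that the ``independent colour swap on $C$'' really is forced to fail to extend to an element of $\Aut X\times S_2$ (this is where the $4$-cycle / bipartiteness obstruction to lifting lives). A secondary subtlety is the boundary edges between $C$ and the rest: one must be careful that \emph{all} neighbours of $C$ outside $U$ are fixed, which is where the precise phrasing ``the subgraph induced by the set of un-fixed vertices is disconnected and has a component $C$'' is used. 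Once those two points are pinned down, the adjacency check is a routine finite case analysis, and the conclusion follows from \cref{StableIffStabilizer}.
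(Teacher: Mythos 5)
The paper does not actually prove this lemma --- it is quoted from Wilson \cite[\S 2.2]{Wilson} --- so your proposal has to stand on its own, and as written it has a real gap: you never commit to a single definition of $\beta$, and each candidate you do write down fails. The map ``$\beta(u_1)=\alpha(u)_1$ for $u\in C$, identity elsewhere'' is not a bijection when $\alpha(C)\neq C$ (both $C\times\{1\}$ and $\alpha(C)\times\{1\}$ are sent into $\alpha(C)\times\{1\}$), and it breaks adjacency on edges inside $C$: for $u\sim v$ in $C$ the image pair $u_0,\alpha(v)_1$ requires $u\sim\alpha(v)$, which fails because $C$ and $\alpha(C)$ are distinct components of the subgraph induced on un-fixed vertices and therefore have no edges between them. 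The map that is ``the identity on $C'\times\{0\}\cup C''\times\{1\}$ and swaps layers on $C'\times\{1\}\cup C''\times\{0\}$'' is not injective ($v_0$ is fixed and is also the image of $v_1$ for $v\in C'$). More fundamentally, no genuine layer swap over $C$ can coexist with your requirement that $\beta$ be the identity off $C$: if $u\in C$ has a neighbour $w\notin C$, then $w$ is fixed by $\alpha$, so $\beta(w_1)=w_1$ forces $\beta(u_0)$ to lie in layer~$0$, and connectivity of $C$ propagates layer-preservation to all of $C$. So the ``independent colour swap'' you are reaching for cannot be realized by exchanging layers.

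What does work is a layer-\emph{preserving} map $\beta(v_0)=\sigma(v)_0$, $\beta(v_1)=\tau(v)_1$, where $\sigma$ applies $\alpha$ on one colour class (and is the identity elsewhere) and $\tau$ applies $\alpha$ on the other. If $\alpha(C)=C$ with $C'$ and $C''$ each $\alpha$-invariant, take $\sigma=\alpha$ on $C'$ and $\tau=\alpha$ on $C''$; the required condition $u\sim v\Rightarrow\sigma(u)\sim\tau(v)$ is exactly your case analysis (edges inside $C$ alternate colours, and all outside neighbours of $C$ are fixed by $\alpha$), and $\sigma\neq\tau$ because $\sigma$ moves every vertex of $C'\subseteq U$ while $\tau$ fixes $C'$ pointwise, so $\beta\in\Aut BX\setminus(\Aut X\times S_2)$. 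If $\alpha(C)\neq C$, first replace $\alpha$ by the involution $\mu$ acting as $\alpha$ on $C$, as $\alpha^{-1}$ on $\alpha(C)$, and as the identity elsewhere (this is an automorphism of $X$ precisely because the outside neighbours of $C$ and of $\alpha(C)$ are all fixed and there are no edges between the two components); then run the same construction with the classes $C'\cup\mu(C')$ and $C''\cup\mu(C'')$. This is the ``twist around the orbit'' that you flagged as an anticipated obstacle but did not carry out, and it is the step on which your version founders. Finally, do not appeal to \cref{StableIffStabilizer}: that criterion is stated only for connected abelian Cayley graphs, whereas this lemma concerns arbitrary graphs; the correct conclusion is the direct one, namely that $\beta$ induces different permutations on the two layers and hence lies outside $\Aut X\times S_2$.
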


\begin{prop}[Hujdurović-Mitrović {\cite[Prop.~3.2]{HujdurovicMitrovic}}] \label{triangles}
Let $X$ be a connected graph with more than one vertex, and assume that $X$ satisfies the following conditions:
	\begin{enumerate}
	\item Every edge of $X$ lies on a triangle.
	\item For every $x \in V(X)$, it holds that:
		\begin{enumerate}
		\item every vertex at distance~$2$ from~$x$ has a neighbour at distance~$3$ from~$x$, 
		and
		\item every vertex at distance~$3$ from~$x$ has a neighbour at distance~$4$ from~$x$.
		\end{enumerate}
	\end{enumerate}
Then $X$ is stable.
\end{prop}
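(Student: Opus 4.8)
The plan is to work directly with the canonical double cover $BX$ and prove that $\Aut BX=\Aut X\times S_2$. First I would pin down the structure of $BX$: since $X$ is connected with more than one vertex it has an edge, and by condition~(1) that edge lies on a triangle, so $X$ contains an odd cycle and hence $BX$ is connected and bipartite, with bipartition $\{V(X)\times\{0\},\,V(X)\times\{1\}\}$. Therefore every automorphism of $BX$ either fixes these two sets or interchanges them, and since the involution $\sigma\colon(v,i)\mapsto(v,1-i)$ in $S_2$ interchanges them, it is enough to treat an $\alpha\in\Aut BX$ that fixes each part setwise. Such an $\alpha$ is given by a pair of permutations $\phi,\psi$ of $V(X)$ via $\alpha(v,0)=(\phi(v),0)$ and $\alpha(v,1)=(\psi(v),1)$, and ``$\alpha$ preserves adjacency'' translates to $v\sim w\iff\phi(v)\sim\psi(w)$ for all $v,w$, equivalently $\phi\bigl(N(v)\bigr)=N\bigl(\psi(v)\bigr)$ and $\psi\bigl(N(v)\bigr)=N\bigl(\phi(v)\bigr)$ for every~$v$, where $N(\cdot)$ is the open neighbourhood. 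It then suffices to prove $\phi=\psi$, for then $\phi\in\Aut X$ and $\alpha\in\Aut X\times S_2$.

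Next I would bound the distance from $\phi(v)$ to $\psi(v)$. Writing $N^k$ for the $k$-fold neighbourhood and $B_k(v)$ for the closed ball of radius~$k$ about~$v$, condition~(1) forces $N^k(v)=B_k(v)$ for all $k\ge 2$ (each edge at~$v$ lies on a triangle, so $v$ and all of $N(v)$ already lie in $N^2(v)$). Meanwhile the two relations above give, by a short induction that alternates between them, the ``leapfrog'' relations $\phi\bigl(N^k(v)\bigr)=N^k\bigl(\phi(v)\bigr)$ for even~$k$ and $\phi\bigl(N^k(v)\bigr)=N^k\bigl(\psi(v)\bigr)$ for odd~$k\ge 1$. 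Combining the two facts at $k=2$ and $k=3$ yields $\phi\bigl(B_2(v)\bigr)=B_2\bigl(\phi(v)\bigr)$ and $\phi\bigl(B_3(v)\bigr)=B_3\bigl(\psi(v)\bigr)$; since $v\in B_3(v)$, the latter gives $\phi(v)\in B_3\bigl(\psi(v)\bigr)$, so $d\bigl(\phi(v),\psi(v)\bigr)\le 3$ for all~$v$.

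Then I would rule out the distances $1,2,3$. Distance~$1$ is impossible: $\phi(v)\sim\psi(v)$ would put $\phi(v)\in N\bigl(\psi(v)\bigr)=\phi\bigl(N(v)\bigr)$, i.e.\ $v\in N(v)$. If $d\bigl(\phi(v),\psi(v)\bigr)=2$, apply condition~2(a) with basepoint $\phi(v)$ to the distance-$2$ vertex $\psi(v)$: it has a neighbour $z$ at distance~$3$ from $\phi(v)$. But $z\in N\bigl(\psi(v)\bigr)=\phi\bigl(N(v)\bigr)$, so $z=\phi(w)$ with $w\sim v$; then $w\in B_2(v)$, whence $z\in\phi\bigl(B_2(v)\bigr)=B_2\bigl(\phi(v)\bigr)$, contradicting $d\bigl(\phi(v),z\bigr)=3$. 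The case $d=3$ is identical, using condition~2(b) to obtain a neighbour of $\psi(v)$ at distance~$4$ from $\phi(v)$, again contradicting that it lies in $B_2\bigl(\phi(v)\bigr)$. Hence $\phi(v)=\psi(v)$ for every~$v$, so $\phi=\psi$.

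I do not expect a deep obstacle; the argument is essentially bookkeeping once it is set up. The one point needing care is the parity in the leapfrog relations --- it is crucial that $B_3$ on the left corresponds to a ball centred at $\psi(v)$, not $\phi(v)$, since that is exactly what links $\phi$ to $\psi$ --- together with the verification that condition~(1) really does turn $N^k$ into closed balls for $k\ge 2$. Once these are in hand, condition~(1) determines $\phi$ and $\psi$ on all of $B_3(v)$ up to the leapfrog, and conditions~2(a)--2(b) are precisely what forbid $\psi(v)$ from lying at distance $2$ or $3$ from $\phi(v)$.
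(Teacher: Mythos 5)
The paper does not prove this proposition: it is quoted from Hujdurović--Mitrović and used as a black box, so there is no internal proof to compare your argument against. On its own merits, your proof is correct and self-contained. The reduction to a part-preserving $\alpha=(\phi,\psi)$ is legitimate because condition~(1) plus connectedness gives a triangle, hence $BX$ is connected and its bipartition is unique; the identity $\phi\bigl(N(A)\bigr)=N\bigl(\psi(A)\bigr)$ for sets $A$ makes the leapfrog induction go through; and the equality $N^k(v)=B_k(v)$ for $k\ge 2$ does follow from the triangle condition (a shortest path of the wrong parity can be lengthened by one step using a triangle on its first edge, and the distance-$0$ case uses a triangle at $v$ itself). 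The three exclusions are all sound: distance~$1$ would force $v\in N(v)$, and distances~$2$ and~$3$ each produce, via 2(a) or 2(b), a neighbour $z$ of $\psi(v)$ lying in $N\bigl(\psi(v)\bigr)=\phi\bigl(N(v)\bigr)\subseteq\phi\bigl(B_2(v)\bigr)=B_2\bigl(\phi(v)\bigr)$ while simultaneously being at distance~$3$ or~$4$ from $\phi(v)$. The argument also degenerates gracefully when no vertices at distance~$2$ or~$3$ exist (e.g.\ complete graphs), since then only the distance-$1$ case needs ruling out.
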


\subsection{Other results on automorphisms and isomorphisms}

\begin{prop}[Baik-Feng-Sim-Xu {\cite[Thm.~1.1]{BaikFengSimXu4}}] \label{4cyclenormal}
Let $S$ be a generating set of an abelian group~$G$, such that $S = -S$, $0 \notin S$, and, for all $s,t,u,v \in S$:
	\[ s + t = u + v \neq 0 \implies \{s,t\} = \{u,v\} .\]
If $\alpha$ is any automorphism of the graph $\Cay(G; S)$, such that $\alpha(0) = 0$, then $\alpha$ is an automorphism of the group~$G$ \textup(i.e., $\alpha(g + h) = \alpha(g) + \alpha(h)$, for all $g,h \in G$\textup).
\end{prop}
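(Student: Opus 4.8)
The statement to establish is that every graph automorphism $\alpha$ of $X = \Cay(G;S)$ with $\alpha(0) = 0$ is additive, i.e.\ $\alpha(g+h) = \alpha(g) + \alpha(h)$ for all $g,h \in G$; since $\alpha$ is already a bijection of the vertex set $G$, additivity makes it an automorphism of the group. The plan is to first extract from the four-cycle hypothesis a precise description of the common-neighbour and short-cycle structure of $X$, then use this rigidity to show that the ``local displacement'' $\alpha(v+s)-\alpha(v)$ is independent of the base vertex~$v$, and finally telescope along walks.

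First I would record the combinatorial consequences of the hypothesis. For distinct $s,t \in S$, the only candidates for common neighbours of the vertices $g+s$ and $g+t$ are $g$ and $g+s+t$; the condition forces these to be the only ones, and $g+s+t = g$ exactly when $t = -s$. Hence $g+s$ and $g+t$ have exactly two common neighbours when $t \neq -s$ and exactly one when $t = -s$; similarly, when $2s \neq 0$ the vertices $g$ and $g+2s$ have the single common neighbour $g+s$, and the increments (elements of $S$) along any genuine $4$-cycle have the form $e_1, e_2, -e_1, -e_2$ with $e_1 \neq \pm e_2$. Since $\alpha$ fixes $0$ and preserves all of this, it permutes $S$; writing $\sigma$ for the restriction $\alpha|_S$, a common-neighbour count for the pair $\{s,-s\}$ gives $\sigma(-s) = -\sigma(s)$, and a short argument with the involutions in $S$ (using injectivity of $\sigma$ and $\sigma^{-1}$) shows $\sigma$ maps $\{s \in S : 2s = 0\}$ onto itself.

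The heart of the matter is to show that for each fixed $s \in S$ the function $f_s(v) := \alpha(v+s) - \alpha(v) \in S$ is constant, necessarily with value $f_s(0) = \sigma(s)$. Pushing the $4$-cycle $v,\,v+t,\,v+t+s,\,v+s$ through $\alpha$ (this is a genuine $4$-cycle precisely when $t \neq \pm s$) and combining the increment description of its image with the trivial identity $f_{-s}(w+s) = -f_s(w)$ yields $f_s(v) = f_s(v+t)$ for all $t \in S \setminus \{\pm s\}$. Pushing instead the bent path $v,\,v+s,\,v+2s$ through $\alpha$ and using that $\alpha(v)$ and $\alpha(v+2s)$ have a unique common neighbour forces $f_s(v) = f_s(v+s)$ whenever $2s \neq 0$. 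Together these make $f_s$ invariant under all translations by elements of $S$, hence constant on the connected graph $X$; the order-$2$ generators are handled separately, using $f_s(s) = \alpha(0) - \alpha(s) = -\sigma(s) = \sigma(s)$ to propagate constancy across the (possibly nontrivial) quotient $G/\langle S \setminus \{s\}\rangle$. Once $f_s \equiv \sigma(s)$ for every $s \in S$, telescoping along any walk from $0$ to $x$ gives $\alpha(x) = \sigma(s_1) + \cdots + \sigma(s_k)$ for every expression $x = s_1 + \cdots + s_k$; this exhibits the right-hand side as independent of the chosen expression and, by concatenating expressions for $x$ and~$y$, shows $\alpha(x+y) = \alpha(x) + \alpha(y)$.

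The step I expect to be the main obstacle is exactly the constancy of $f_s$. The $4$-cycle argument only delivers invariance of $f_s$ under translation by $S \setminus \{\pm s\}$, and this set need not generate~$G$ (for instance if $S = \{\pm s\}$, or, more subtly, if $s$ has order~$2$ and lies outside $\langle S \setminus \{s\}\rangle$); one therefore has to supply the missing ``$+s$ direction'' by the genuinely different bent-path computation and deal with the order-$2$ case by hand. The surrounding rigidity bookkeeping — which common neighbours actually occur, and the $e_1,e_2,-e_1,-e_2$ form of $4$-cycles — is routine in spirit but must be set up with care, since it is invoked repeatedly and the degenerate configurations ($s = \pm t$, $2s = 0$, and coincidences among the vertices of a putative $4$-cycle) are where the real content lies.
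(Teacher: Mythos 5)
Your argument is correct. Note that the paper does not actually prove this proposition: it is quoted from Baik--Feng--Sim--Xu \cite[Thm.~1.1]{BaikFengSimXu4} and used as a black box, so there is no internal proof to compare against, and what you have written amounts to a self-contained proof of the cited result. The computations all check out: applying the hypothesis to $s+u = t+v$, rewritten as $s+(-v) = t+(-u)$ using $S=-S$, pins down the common neighbours of $g+s$ and $g+t$ as exactly $g$ and $g+s+t$, and the common neighbour of $g$ and $g+2s$ as the single vertex $g+s$; the increment pattern $e_1,e_2,-e_1,-e_2$ for a genuine $4$-cycle follows from the hypothesis together with the distinctness of opposite vertices of the cycle; the image under $\alpha$ of the $4$-cycle $v,\,v+t,\,v+t+s,\,v+s$ then forces $f_s(v)=f_s(v+t)$ for $t\neq\pm s$, and preservation of the common-neighbour count of $\alpha(v)$ and $\alpha(v+2s)$ forces $f_s(v)=f_s(v+s)$ when $2s\neq 0$. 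You were right to flag the constancy of $f_s$ as the delicate point: for an order-$2$ generator $s$ one really does need the preliminary fact that $\sigma$ preserves involutions, so that $f_s(s)=-\sigma(s)=\sigma(s)=f_s(0)$ bridges the two cosets of $\langle S\setminus\{s\}\rangle$; your treatment of this and of the degenerate case $S=\{\pm s\}$ is sound, and the final telescoping works because $S=-S$ guarantees that every element of $G$ is a sum of elements of~$S$.
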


\begin{defn}[{\cite[p.~35]{ProductHandbook}}]
Recall that the \emph{Cartesian product} $X \cartprod Y$ of two graphs $X$ and~$Y$ has vertex set $V(X) \times V(Y)$, and two vertices $(x_1, y_1)$ and $(x_2, y_2)$ are adjacent if and only if either
	\begin{itemize}
	\item $x_1 = x_2$ and $y_1y_2 \in E(Y)$,
	or
	\item $y_1 = y_2$ and $x_1x_2 \in E(X)$.
	\end{itemize}
\end{defn}

\begin{prop}[cf.\ {\cite[Thm.~6.10, p.~69]{ProductHandbook}}] \label{Aut(C4timesX)}
Let $X$ be a connected graph. If there does not exist a graph~$Y$\!, such that $X \iso Y \cartprod K_2$, then 
	\[ \text{$\Aut(X \cartprod K_2) = \Aut X \times S_2$
	\ and \ 
	$\Aut(X \cartprod C_4) = \Aut X \times \Aut C_4$.} \]
\end{prop}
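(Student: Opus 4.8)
The plan is to derive both equalities from the unique prime factorization of connected graphs under the Cartesian product, together with the description of the automorphism group of such a product in terms of its prime factors (all found in \cite{ProductHandbook}). Write $X \iso X_1 \cartprod X_2 \cartprod \cdots \cartprod X_k$ with each $X_i$ connected and prime; since $X$ is connected, this factorization exists and is unique up to reordering. The first step is to observe that the hypothesis ``$X \not\iso Y \cartprod K_2$ for every graph~$Y$'' is equivalent to ``$X_i \not\iso K_2$ for all~$i$'': if some $X_i \iso K_2$ we may take $Y$ to be the Cartesian product of the remaining factors, and conversely if $X \iso Y \cartprod K_2$ then factoring $Y$ into primes and invoking uniqueness forces $K_2$ to occur among $X_1,\dots,X_k$. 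Since $K_2$ is prime, this says exactly that $X$ and $K_2$ share no prime factor.

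For the first equality, apply the structure theorem for automorphisms of a Cartesian product to the prime factorization $X \cartprod K_2 \iso X_1 \cartprod \cdots \cartprod X_k \cartprod K_2$: every automorphism is a composition of automorphisms of the individual prime factors with a permutation of the factors that preserves isomorphism type. Because $K_2$ is isomorphic to none of $X_1,\ldots,X_k$, no automorphism can move the $K_2$-factor, so each automorphism splits as an automorphism of $X_1 \cartprod \cdots \cartprod X_k \iso X$ together with an automorphism of $K_2$. Hence $\Aut(X \cartprod K_2) = \Aut X \times \Aut K_2 = \Aut X \times S_2$. (This is just the standard corollary that relatively prime connected graphs $G$ and $H$ satisfy $\Aut(G \cartprod H) = \Aut G \times \Aut H$.)

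For the second equality, use $C_4 \iso K_2 \cartprod K_2$, so that $X \cartprod C_4 \iso X_1 \cartprod \cdots \cartprod X_k \cartprod K_2 \cartprod K_2$ is a prime factorization in which $K_2$ occurs with multiplicity exactly~$2$, while no $X_i$ is isomorphic to $K_2$. Now an automorphism may, besides acting on the factors $X_i$, interchange the two $K_2$-factors and act on each by $\Aut K_2$, but it still cannot mix a $K_2$-factor with any~$X_i$. The subgroup of automorphisms supported on the two $K_2$-factors is therefore $\Aut K_2 \wr S_2 \iso \Aut(K_2 \cartprod K_2) \iso \Aut C_4$, and what remains is $\Aut X$, giving $\Aut(X \cartprod C_4) = \Aut X \times \Aut C_4$.

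The only real obstacle is the bookkeeping: one must be sure that ``no $Y$ with $X \iso Y \cartprod K_2$'' is genuinely the same condition as ``$K_2$ is not a prime factor of $X$'' (this is where connectedness of $X$, hence uniqueness of prime factorization, is used), and that the automorphism structure theorem is applied to an honest prime factorization with the factors correctly grouped by isomorphism type, so that the rigid $K_2$ (or pair of $K_2$'s) really does split off. The degenerate case $X \iso K_1$ (the empty product, $k = 0$) is harmless, since then both sides reduce to $\Aut K_2$, respectively $\Aut C_4$.
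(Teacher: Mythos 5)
Your proof is correct, and it is essentially the paper's own justification: the paper gives no written proof but simply cites the structure theorem for automorphisms of prime factorizations of connected graphs (Theorem~6.10 of the Handbook of Product Graphs), and your argument is exactly the routine derivation from that theorem, including the correct reduction of the hypothesis to ``$K_2$ is not a prime factor of~$X$'' and the identification $\Aut(K_2 \cartprod K_2) \iso \Aut C_4$.
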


We will use the following elementary observation in part~\pref{val4-MoebiusPrism} of the proof of \cref{val4unstable}.

\begin{lem} \label{B(XboxBip)}
Let $X$ and~$Y$ be graphs. If $Y$ is bipartite, then $B(X \cartprod Y) \iso (BX) \cartprod Y$.
\end{lem}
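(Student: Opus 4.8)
The plan is to write down an explicit isomorphism. Since $Y$ is bipartite, fix a $2$-coloring $\epsilon\colon V(Y)\to\{0,1\}$ with $\epsilon(y)\neq\epsilon(y')$ whenever $yy'\in E(Y)$ (such an $\epsilon$ exists precisely because $Y$ is bipartite, and no connectedness is needed). Recall that the vertex set of $B(X\cartprod Y)$ is $\bigl(V(X)\times V(Y)\bigr)\times\{0,1\}$, whereas the vertex set of $(BX)\cartprod Y$ is $\bigl(V(X)\times\{0,1\}\bigr)\times V(Y)$. The naive bijection that just reorders the three coordinates is \emph{not} the map we want, because it fails to respect adjacency; instead I would twist the $\{0,1\}$-label by the color of the $Y$-coordinate, defining
\[
  \phi\bigl((x,y),i\bigr)=\bigl((x,\,i+\epsilon(y)),\,y\bigr),
\]
with $i+\epsilon(y)$ read modulo~$2$. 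For each fixed $y$ the map $i\mapsto i+\epsilon(y)$ is a bijection of $\{0,1\}$, so $\phi$ is a bijection between the two vertex sets.

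Next I would check that $\phi$ sends edges to edges. An edge of $B(X\cartprod Y)$ joins $\bigl((x_1,y_1),0\bigr)$ to $\bigl((x_2,y_2),1\bigr)$, where $(x_1,y_1)$ and $(x_2,y_2)$ are adjacent in $X\cartprod Y$, so there are two cases. If $x_1=x_2$ and $y_1y_2\in E(Y)$, then $\epsilon(y_1)\neq\epsilon(y_2)$ forces $0+\epsilon(y_1)\equiv 1+\epsilon(y_2)\pmod 2$, so the two $\phi$-images have equal first coordinate and differ only in a pair of $Y$-adjacent second coordinates, hence are adjacent in $(BX)\cartprod Y$. If instead $y_1=y_2$ and $x_1x_2\in E(X)$, then the two $\phi$-images have equal second coordinate $y_1$, while their first coordinates are $(x_1,\epsilon(y_1))$ and $(x_2,1+\epsilon(y_1))$, which are adjacent in $BX$ because $x_1x_2\in E(X)$ and the $\{0,1\}$-labels differ; again the images are adjacent in $(BX)\cartprod Y$.

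Finally I would run the identical two-case analysis on the inverse map $\psi\bigl((x,i),y\bigr)=\bigl((x,y),\,i+\epsilon(y)\bigr)$, confirming that it too carries edges to edges: an edge of $(BX)\cartprod Y$ lies either within an $X$-fibre (equal first coordinate, $Y$-adjacent second coordinates) or within a $Y$-fibre (equal $Y$-coordinate, $BX$-adjacent first coordinates), and in each case the color twist $i+\epsilon(y)$ converts it to an edge of $B(X\cartprod Y)$. Since $\phi$ is a bijection whose inverse also preserves edges, $\phi$ is a graph isomorphism, which is the claim. I do not expect a real obstacle here; the only thing requiring care is the bookkeeping of which coordinate carries the ``$0/1$'' of the double cover and how the bipartition twist interacts with the two types of Cartesian-product edge, so I would keep the ``$X$-fibre edge'' and ``$Y$-fibre edge'' cases rigidly separated throughout.
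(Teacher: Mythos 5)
Your proof is correct, and in fact the paper states this lemma without any proof at all (it is introduced as an ``elementary observation''), so your argument fills in exactly what the author left to the reader. The one genuinely non-obvious point --- that the naive reordering of coordinates fails and the $\{0,1\}$-label must be twisted by the bipartition class $\epsilon(y)$ --- is handled correctly, and your two-case check of adjacency in both directions, together with the verification that $\psi$ inverts $\phi$, is a complete proof valid for arbitrary (not necessarily connected) bipartite $Y$, which is all the paper needs when it applies the lemma with $Y=K_2$.
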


\subsection{Stability of a few specific graphs}

\begin{eg}[{\cite[Example~2.2]{QinXiaZhou-circulant}}] \label{KnStable}
If $n \ge 3$, then the complete graph $K_n$ is stable. (But $K_2$ is bipartite, and is therefore unstable.)
\end{eg}

\begin{lem} \label{SmallZnxZ2}
For $2 \le n \le 7$, the abelian Cayley graph
	\[ \Cay \bigl( \ZZ_n \times \ZZ_2; \pm(1,0), \pm(1,1), (0,1) \bigr) \]
is stable, unless $n = 4$, in which case it is unstable.
\end{lem}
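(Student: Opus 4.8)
The plan is to handle the six values of~$n$ separately. Throughout, write $G = \ZZ_n \times \ZZ_2$ and $S = \{\pm(1,0), \pm(1,1), (0,1)\}$, so the graph under consideration is $X = \Cay(G; S)$; since $S$ generates~$G$, this graph is connected. The cases $n \in \{2,3,4,5,7\}$ will all follow quickly from results quoted above, and the real work is the single case $n = 6$.

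For $n \in \{2,3\}$ one checks that $S$ is the set of \emph{all} nonzero elements of~$G$, so $X$ is the complete graph~$K_{2n}$ with $2n \ge 4$, hence stable by \cref{KnStable}. For $n \in \{5,7\}$ the group~$G$ is cyclic of order~$2n$, generated by $(1,1)$, and the isomorphism $G \iso \ZZ_{2n}$ sending $(1,1) \mapsto 1$ carries $X$ to $\Cay\bigl(\ZZ_{2n};\, \pm 1,\ \pm(n+1),\ n\bigr)$; I would then check that this circulant graph is connected, is not bipartite (its generator $n+1$ is even), has no twin vertices (its only involution~$n$ satisfies $0 \in S + n$), and does not appear in \cref{val5circulant} (its order $2n \in \{10, 14\}$ is neither a multiple of~$12$ nor equal to~$8$), so that \cref{val5circulant} yields stability. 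For $n = 4$, I would apply \cref{IsoS+z} with the involution $\invol = (2,0)$ of $\ZZ_4 \times \ZZ_2$: the group automorphism~$\phi$ of~$G$ with $\phi(1,0) = (1,0)$ and $\phi(0,1) = (2,1)$ sends $S$ onto $S + \invol = \{\pm(1,0), \pm(1,1), (2,1)\}$ (it fixes $\pm(1,0)$, interchanges $(1,1)$ and~$(3,1)$, and sends $(0,1)$ to~$(2,1)$), so $\Cay(G; S) \iso \Cay(G; S + \invol)$ and $X$ is unstable by \cref{IsoS+z}.

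The case $n = 6$ is the main obstacle, since $\ZZ_6 \times \ZZ_2$ is not cyclic and none of the classification theorems applies. Here the plan is to use \cref{StableIffStabilizer}: it suffices to prove that every $\alpha \in \Aut BX$ fixing the basepoint $(0,0)$ also fixes its copy $(0,1)$ in the other layer. Using $BX \iso \Cay(G \times \ZZ_2;\, S \times \{1\})$, I would first compute the sumset $S + S$ and observe that $S + S = G \setminus \{(3,0), (3,1)\}$; the distance partition of $BX$ about the basepoint then follows at once: distance~$1$ consists of the five vertices $(s,1)$ with $s \in S$; distance~$2$ of the nine vertices $(w,0)$ with $w \in (S+S) \setminus \{0\}$; every remaining layer-$1$ vertex $(g,1)$ (equivalently, $g \notin S$) lies at distance~$3$, seven in all, including the basepoint's copy $(0,1)$; and the two remaining layer-$0$ vertices, those over $(3,0)$ and $(3,1)$, lie at distance~$4$. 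As $\alpha$ preserves every cell of this partition, the proof concludes with the observation that, among the seven distance-$3$ vertices, $(0,1)$ is the \emph{unique} one with no neighbour at distance~$4$: the number of distance-$4$ neighbours of $(g,1)$ equals $\bigl|\{(3,0),(3,1)\} \cap (g + S)\bigr|$, which a brief enumeration of the seven possibilities for~$g$ shows to be~$0$ exactly when $g = 0$. Hence $\alpha(0,1) = (0,1)$, and \cref{StableIffStabilizer} shows that $X$ is stable.
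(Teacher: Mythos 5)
Your proposal is correct, but it takes a genuinely different route from the paper: the paper's entire proof is a one-line delegation to the \textsf{sagemath} program in \cref{ZnxZkSage}, together with a follow-up remark observing that every case except $n = 6$ can be settled by hand (odd~$n$ via \cref{val5circulant} since $\ZZ_n \times \ZZ_2$ is then cyclic, $n = 2$ via $K_4$ and \cref{KnStable}, and $n = 4$ via the argument in part~\pref{val6-4} of the proof of \cref{val6unstable}). Your treatment of $n \ne 6$ essentially reproduces that remark, with small variations (you recognize $K_6$ directly for $n = 3$ instead of invoking the circulant classification, and you build the $n=4$ instability from \cref{IsoS+z} explicitly; I checked that your automorphism $\phi$ does carry $S$ to $S + (2,0)$). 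The genuinely new content is a computer-free proof of the $n = 6$ case, which the paper leaves to the machine. I verified your computations there: $S + S$ is indeed $G \setminus \{(3,0),(3,1)\}$, the distance partition about $(0,0)$ in $BX$ has cells of sizes $1, 5, 9, 7, 2$, and among the seven distance-$3$ vertices $(g,1)$ the count $\bigl|\{(3,0),(3,1)\} \cap (g+S)\bigr|$ is $2,1,2,2,1,2$ for $g \in \{(2,0),(3,0),(4,0),(2,1),(3,1),(4,1)\}$ and $0$ only for $g = (0,0)$, so $(0,1)$ is indeed pinned down and \cref{StableIffStabilizer} applies. One very minor gap: in the twin-vertex check for $n \in \{5,7\}$ you only rule out $\twin$ equal to the involution~$n$; by \cref{twiniff} one should also rule out $\twin$ of odd prime order, but since $S$ would then have to be a union of cosets of $\langle \twin \rangle$ and $|S| = 5$, this is immediate ($S$ is not the odd residues for $n=5$, and $7 \nmid 5$ for $n = 7$). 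What your approach buys is a fully self-contained, verifiable proof; what the paper's buys is brevity and a check that simultaneously covers \cref{Z3xZn}.
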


\begin{proof}
This can be checked very quickly by computer. For example, the \textsf{sagemath} program in \cref{ZnxZkSage} can be executed on \url{https://cocalc.com}. (The program also verifies \cref{Z3xZn}.)
\end{proof}

\begin{rem}
Most cases of \cref{SmallZnxZ2} can be settled quite easily without a computer:
	\begin{itemize}
	\item If $n$ is odd, then $\ZZ_n \times \ZZ_2$ is cyclic, so \cref{val5circulant} can be applied. 
	\item If $n = 2$, then the Cayley graph is~$K_4$, which is stable by \cref{KnStable}.
	\item If $n = 4$, then part~\pref{val6-4} of the proof of \cref{val6unstable} explains why the Cayley graph is unstable.
	\end{itemize}
Therefore, $n = 6$ is the only case that requires effort (or a computer).
\end{rem}

\begin{lem} \label{Z3xZn}
For $3 \le n \le 12$, the abelian Cayley graph
	\[ \Cay \bigl( \ZZ_n \times \ZZ_3; \pm(1,0), \pm(1,1), \pm(0,1) \bigr) \]
is stable, unless $n = 3$, in which case it is unstable \textup(and is listed in \fullcref{val6}{9}\textup).
\end{lem}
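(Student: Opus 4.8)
The plan is to treat this as a finite verification over the ten values $n \in \{3, 4, \dots, 12\}$, reducing as much of the work as possible to the general criteria collected in the preliminaries. Fix~$n$ and write $X_n = \Cay(\ZZ_n \times \ZZ_3; \pm(1,0), \pm(1,1), \pm(0,1))$. Each $X_n$ is connected (since $(1,0)$ and $(0,1)$ generate $\ZZ_n \times \ZZ_3$) and nonbipartite (the coset $\langle(0,1)\rangle$ induces a triangle), so by \cref{twiniff} the graph $X_n$ is unstable precisely when it is nontrivially unstable or $S + \twin = S$ for some nonzero $\twin \in \ZZ_n \times \ZZ_3$. Projecting to the first coordinate (so $S$ projects to the multiset $\{1,-1,1,-1,0,0\}$) quickly rules out the second alternative when $n \ge 4$; and for $n = 3$ one has $S + (1,2) = S$, since $S$ is then the union of the two cosets $\pm(1,0) + \langle(1,2)\rangle$. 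Thus $X_3$ has twin vertices (in fact $X_3 \iso \Tr(3,3,0) \iso K_{3,3,3}$, the graph recorded in \fullcref{val6}{9}), and it remains only to prove that $X_n$ is \emph{not} nontrivially unstable for each $n \in \{4, 5, \dots, 12\}$.

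If $n$ is odd, then $\ZZ_n \times \ZZ_3$ has odd order, so \cref{odd} forbids nontrivial instability; this settles $n \in \{5, 7, 9, 11\}$. If $n$ is even with $3 \nmid n$, i.e.\ $n \in \{4, 8, 10\}$, then $\ZZ_n \times \ZZ_3 \iso \ZZ_{3n}$ is cyclic, so $X_n$ is a circulant graph of valency~$6$: writing it explicitly as a circulant via the Chinese Remainder isomorphism, one checks that none of the seven possibilities listed in \cref{val6circulant} occurs. For $n = 10$ this is immediate, since $3n = 30$ is not divisible by~$4$ whereas every case of \cref{val6circulant} requires order divisible by~$4$; for $n = 4$ and $n = 8$ it is a routine finite elimination. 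This leaves only $n = 6$ and $n = 12$.

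For $n \in \{6, 12\}$ we have $3 \mid n$, so $\ZZ_n \times \ZZ_3$ is not cyclic and the circulant classifications do not apply; moreover the two ``geometric'' stability criteria of the preliminaries also fail — the generating set violates the quadrangle hypothesis of \cref{4cyclenormal} (as $(1,0) + (0,-1) = (1,1) + (0,1)$), and the distance hypothesis of \cref{triangles} fails since the vertex $(1,2)$ lies at distance~$2$ from the origin yet has no neighbour at distance~$3$. So for these two graphs one must argue directly via \cref{StableIffStabilizer}, showing that every $\alpha \in \Aut BX_n$ with $\alpha(0,0) = (0,0)$ also fixes $(0,1)$; in practice the cleanest route is to compute $|\Aut BX_n|$ and $|\Aut X_n|$ and confirm $|\Aut BX_n| = 2\,|\Aut X_n|$, exactly as in the proof of \cref{SmallZnxZ2} and as carried out by the program in \cref{ZnxZkSage}. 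I expect these two non-cyclic, even-order graphs to be the main obstacle: each general tool in the preliminaries has an exception that excludes them, so there seems to be no shortcut around a concrete (and most conveniently machine-assisted) check for $n \in \{6, 12\}$.
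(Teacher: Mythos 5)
Your proposal is correct, but it takes a genuinely different route from the paper: the paper's entire proof of this \lcnamecref{Z3xZn} is the single computer verification in \cref{ZnxZkSage} (all ten values of~$n$ at once), followed by the observation that for $n=3$ the generators $(1,0)$ and $(0,1)$ both have order~$3$, so the graph is the one in \fullcref{val6}{9}. You instead dispatch eight of the ten cases by the general machinery -- twin-freeness for $n\ge4$ via \cref{twiniff}, odd order via \cref{odd} for $n\in\{5,7,9,11\}$, and the circulant classification \cref{val6circulant} for $n\in\{4,8,10\}$ -- and fall back on a direct (in practice machine-assisted) check only for $n\in\{6,12\}$. Your reductions all check out: the twin analysis via the first-coordinate projection is sound, $30\not\equiv0\pmod 4$ does rule out every case of \cref{val6circulant} for $n=10$, and I verified that the ``routine finite eliminations'' you assert for $n=4$ and $n=8$ (i.e., $\Cay(\ZZ_{12};\pm1,\pm3,\pm4)$ and $\Cay(\ZZ_{24};\pm1,\pm8,\pm9)$) do succeed -- though note that in both cases the connection set has the form $\{\pm1,\pm b,\pm(b+1)\}$, so the paper's own \cref{val6abcCyclic} would let you skip straight to checking membership in the short list of \cref{val6}, which is quicker than working through all seven families of \cref{val6circulant}. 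What your route buys is that most of the lemma becomes hand-verifiable and the computation is localized to two graphs of orders $18$ and~$36$; what it costs is length, and the fact that for those two residual graphs you end up invoking exactly the same \textsf{sagemath} check that the paper uses uniformly for everything, so the computer dependence is reduced but not eliminated.
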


\begin{proof}
As mentioned in the proof of \cref{SmallZnxZ2}, the stability/instability of these graphs is calculated by the \textsf{sagemath} program in \cref{ZnxZkSage}.

For $n = 3$, the elements $a = (1,0)$ and $b = (0,1)$ have order~$3$. 
Also, if we let $c =  - (1,1)$, then $a + b + c = (0,0)$.
Therefore, the Cayley graph is described in \fullcref{val6}{9}.
\end{proof}

\vskip0pt plus 0.3fil 
\begin{figure}[ht]
\begin{minipage}{11.5cm} 
\begin{verbatim}
for n in range(2, 13):
    for k in [2, 3]:
        G = direct_product_permgroups(
            [CyclicPermutationGroup(n), 
               CyclicPermutationGroup(k)])
        a, b = G.gens()
        assert {a.order(), b.order()} == {n, k}
        X = Graph(G.cayley_graph(generators=[a, b, a*b]))
        AutX = X.automorphism_group()
        K2 = graphs.CompleteGraph(2)
        BX = X.categorical_product(K2)
        AutBX = BX.automorphism_group()
        if 2 * AutX.order() != AutBX.order():
            print(n, k, "unstable")
\end{verbatim}
\end{minipage}
\caption{A \textsf{sagemath} \cite{sage} program to verify \cref{SmallZnxZ2,Z3xZn}.}
\label{ZnxZkSage}
\end{figure}

\section{Unstable abelian Cayley graphs of valency 4} \label{val4sect}

This \lcnamecref{val4sect} proves the following \lcnamecref{val4}, which implies \cref{Qd}. 
It also generalizes \cref{val4circulant}, which handles the case where $G$ is cyclic;
however, our argument relies on \cref{val4circulant}, so we are not providing an independent proof of that result.

\begin{thm} \label{val4}
A connected abelian Cayley graph\/ $\Cay(G; S)$ of valency~$4$ is unstable if and only if either it is bipartite, or it is in the following list \textup(up to a group isomorphism\,\textup)\textup:
\noprelistbreak
	\begin{enumerate}
	
	\item \label{val4-int4}
	$\Cay(G; \pm a, \pm b)$, where $|\langle a \rangle \cap \langle b \rangle| = 4$.

	\item \label{val4-square}
	$\Cay(G; \pm a, \pm b)$, where $|G : \langle b \rangle| = m$, $ma = 2 \ell m b$, $|b| = 2 k m$,
		and $4\ell^2 \equiv \pm 1 \pmod{k}$.
	
	\item \label{val4-2a=2b}
	$\Cay(G; \pm a, \pm b)$, where $2a = 2b$.
	
	\item \label{val4-MoebiusPrism} $\Cay \bigl( \ZZ_{2n} \times \ZZ_2; \pm(1,0), (n,0), (0,1) \bigr) \iso M_{2n} \cartprod K_2$.
		\begin{enumerate}
		\item \label{val4-MoebiusPrism-n=4}
		If $n = 2$, this is isomorphic to $K_4 \cartprod K_2$, and can also be realized as
			\[ \Cay \bigl( \ZZ_2 \times \ZZ_2 \times \ZZ_2; (1,0,0), (0,1,0), (1,1,0), (0,0,1) \bigr) . \]
		\end{enumerate}

	\end{enumerate}
\end{thm}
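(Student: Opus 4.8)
We prove the two implications separately; the force of the result is in the converse direction.

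\textbf{Sufficiency.}
A bipartite graph is unstable because its canonical double cover is disconnected (and $X$ is connected, as $S$ generates~$G$). For~\pref{val4-2a=2b}, the hypothesis $2a = 2b$, together with $a \neq \pm b$ (forced by the valency), makes $a - b$ a nonzero element of order~$2$ with $S + (a - b) = S$, so $X$ has twin vertices \csee{twiniff} and is trivially unstable. For~\pref{val4-int4} and~\pref{val4-square} I would exhibit an element~$\invol$ of order~$2$ and a group automorphism of~$G$ carrying~$S$ onto $S + \invol$, so that $\Cay(G;S) \iso \Cay(G; S + \invol)$, and then apply \cref{IsoS+z}: in~\pref{val4-int4} one takes $\invol$ to be the involution of the cyclic group $\langle a\rangle \cap \langle b\rangle \iso \ZZ_4$, and in~\pref{val4-square} one takes~$\invol$ inside~$\langle b\rangle$ (after a correction), the congruence $4\ell^2 \equiv \pm1 \pmod k$ being exactly the condition for the relevant linear map to respect the defining relations of~$G$; these arguments extend the proofs of the two cases of \cref{val4circulant}. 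For~\pref{val4-MoebiusPrism}, if $n$ is odd the graph is bipartite, while if $n$ is even then $M_{2n}$ is connected and nonbipartite, and an explicit automorphism of $\ZZ_{2n}\times\ZZ_2$ identifies $BM_{2n}$ with $C_{2n} \cartprod K_2$, so \cref{B(XboxBip)} gives
	\[ B(M_{2n} \cartprod K_2) \iso (BM_{2n}) \cartprod K_2 \iso C_{2n} \cartprod K_2 \cartprod K_2 \iso C_{2n} \cartprod C_4 , \]
whose automorphism group (computed via \cref{Aut(C4timesX)}) is strictly more than twice as large as $\Aut(M_{2n} \cartprod K_2)$; the subcase $n = 2$ is $K_4 \cartprod K_2$, with double cover the $4$-cube $C_4 \cartprod C_4$.

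\textbf{Necessity: reductions.}
Let $X = \Cay(G;S)$ be connected, nonbipartite, unstable, of valency~$4$. If $X$ has twin vertices, then by \cref{twiniff} (passing to a subgroup of prime order) $S$ is a union of cosets of an order-$2$ subgroup $\langle\invol\rangle$; inspecting how a symmetric $4$-element set can be such a union shows $X$ lies in~\pref{val4-2a=2b} or~\pref{val4-MoebiusPrism}. So assume $X$ is nontrivially unstable; then $|G|$ is even, by \cref{odd}. If $S$ contains an element of order~$2$, then it contains exactly two or exactly four of them (the remaining elements pairing up as $\pm$): if four, then $G$ is elementary abelian of rank~$\le 4$ and a finite check shows the connected nonbipartite examples are all $\iso K_4 \cartprod K_2$ (as in~\pref{val4-MoebiusPrism}); if exactly two, then $S = \{\pm a, \invol, \invol'\}$ and nontrivial instability forces $G \iso \ZZ_{2n}\times\ZZ_2$ with generators arranged as in~\pref{val4-MoebiusPrism}. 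Hence we may assume $S = \{\pm a, \pm b\}$ with $|a|,|b| \ge 3$.

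\textbf{Necessity: main argument.}
By \cref{StableIffStabilizer} pick $\alpha \in \Aut BX$ fixing~$(0,0)$ with $\alpha(0,1) = (w, 1)$ for some nonzero $w \in G$, where $BX = \Cay\bigl(G\times\ZZ_2;\{(\pm a,1),(\pm b,1)\}\bigr)$. For this generating set, the hypothesis of \cref{4cyclenormal} fails only when $2a = \pm 2b$, or $|a| = 4$, or $|b| = 4$, or $G$ is cyclic. If $G$ is cyclic we are done by \cref{val4circulant} (its two cases corresponding, for cyclic~$G$, to~\pref{val4-int4} and~\pref{val4-square}, using that $|\langle a\rangle\cap\langle b\rangle| = \gcd(|a|,|b|)$ in a cyclic group). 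The possibility $2a = \pm 2b$ produces a twin element and is excluded. The possibilities $|a| = 4$ or $|b| = 4$ with $G$ noncyclic leave~$G$ so constrained that a direct structural analysis shows $X$ is bipartite, has twins, is a circulant, or lies in~\pref{val4-square}. In the remaining (principal) case, \cref{4cyclenormal} forces $\alpha$ to be an automorphism of the group $G \times \ZZ_2$; restricting it to $G\times\{0\}$ yields a group automorphism $\bar\alpha$ of~$G$ with $\bar\alpha(S) = S + w$. Iterating~$\bar\alpha$ and using twin-freeness gives $\sum_{0\le i<|\bar\alpha|}\bar\alpha^i(w) = 0$, and analyzing how $\bar\alpha$ permutes the four-element set~$S$ after translation by~$w$ — together with the constraints from a two-generator presentation $G = \langle a,b \mid \cdots\rangle$ and from non-bipartiteness — pins down $G$ and the generators, showing $X$ lies in~\pref{val4-int4} or~\pref{val4-square}; the congruence $4\ell^2 \equiv \pm1 \pmod k$ records that $\bar\alpha^2$ acts as $\pm1$ on the relevant cyclic section of~$G$.

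\textbf{Main obstacle.}
The hard part is the last step: distilling from the bare existence of a group automorphism $\bar\alpha$ with $\bar\alpha(S) = S + w$ the explicit parametrizations~\pref{val4-int4} and~\pref{val4-square} and ruling out any further families — in effect, the non-cyclic generalization of the casework underlying \cref{val4circulant}. The delicate bookkeeping is tracking the a priori unknown two-generator presentations of~$G$, and the element~$w$ (whose order need not be~$2$ at the outset), through the iteration of~$\bar\alpha$.
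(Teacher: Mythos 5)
Your architecture matches the paper's almost exactly: sufficiency via \cref{IsoS+z} for families (1) and (2), twin vertices for (3), and the count $B(M_{2n}\cartprod K_2)\iso C_{2n}\cartprod C_4$ for (4); necessity via \cref{StableIffStabilizer}, \cref{4cyclenormal}, \cref{val4circulant} for cyclic $G$, and a separate treatment of order-$2$ generators. But the decisive step --- extracting the parametrizations (1) and (2) from a group automorphism $\bar\alpha$ of $G$ with $\bar\alpha(S)=S+w$ --- is precisely what you defer to your ``main obstacle,'' and the route you sketch (iterating $\bar\alpha$, tracking an element $w$ of a priori unknown order through unknown presentations) is considerably harder than what is needed. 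The simplification you are missing: $\alpha$ is a group automorphism of $G\times\ZZ_2$ and $\alpha(0,1)=(w,1)$, so $(w,1)$ has order $2$ and hence $2w=0$ from the outset. Now set $m=|G:\langle b\rangle|$ and $ma+rb=0$ (after normalizing so that $|b|$ is divisible by the largest power of $2$ dividing $|a|$), and apply $\bar\alpha$ \emph{once} to this single relation. In the subcase $\bar\alpha(b)=\pm b+w$ this forces $\bar\alpha(a)=-a+w$ and yields $2ma=w$, so $ma$ has order $4$ and $|\langle a\rangle\cap\langle b\rangle|=4$, i.e.\ family (1); in the subcase $\bar\alpha(b)=\pm a+w$, a parity argument (using that $m+r$ is odd because $X$ is nonbipartite) first forces $m$ odd and $r=2\ell m$, $|b|=2km$, and the same substitution yields $(4\ell^2\mp 1)mb=w$, i.e.\ family (2). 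No iteration of $\bar\alpha$ and no bookkeeping of presentations is required.

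Two smaller gaps. First, the twin-vertex analysis yields only family (3): when $S$ contains elements of order $2$, the twin condition leads to contradictions, and the graphs in (4) are in fact twin-free, so your ``(3) or (4)'' is imprecise (though harmless for the conclusion). Second, in the case where $S$ contains exactly two involutions, there are three configurations according to $\langle a,b\rangle\cap\langle c\rangle$; two of them ($C_n\cartprod C_4$, and the prism with antipodal chords) must be re-presented as Cayley graphs with $S=\{\pm a,\pm b\}$ and fed back into the later cases, and only the third gives $M_n\cartprod K_2$ --- your one-line assertion that nontrivial instability ``forces'' the configuration of (4) skips this. Likewise, your claim that $|a|=4$ with $G$ noncyclic leads to ``bipartite, twins, circulant, or (2)'' is asserted without argument; the actual resolution is cleaner: nonbipartiteness forces $G$ to be cyclic in each of the subcases $|\langle a\rangle\cap\langle b\rangle|\in\{1,2,4\}$, so everything reduces to \cref{val4circulant}.
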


Before proving this \lcnamecref{val4}, let us show that it implies \cref{Qd}.

\begin{proof}[\bf Proof of \cref{Qd}]
The graphs in \cref{Qd} are required to be nontrivially unstable. Therefore, we see from \cref{val3} that they must have valency~$4$. Hence, it suffices to show that the graphs in \cref{Qd} are precisely the graphs that arise from \cref{val4} by applying \cref{GridIsCayley} (and are not trivially unstable), and satisfy the additional assumption that $m,n \ge 2$ (where $n = |b|$ and $m = |G : \langle b \rangle|$).
To do this, we consider each part of the statement of \cref{val4} individually. We also find the toroidal grids that are obtained by applying \cref{GridIsCayley} after interchanging $a$ and~$b$. (And we know from \cref{abyc} that $\Qd(m,n,r) \iso \Qd(m,n,-r)$.)
	\begin{itemize}
	
	\item[\pref{val4-int4}] The conditions in \fullref{Qd}{m/4k/k} that $n = 4k$ and $r = \pm k$ are a direct translation of the fact that $|\langle a \rangle \cap \langle b \rangle| = 4$. The additional condition that $m + k$ is odd ensures that the grid is not trivially unstable \csee{GridTriv}.
	
	Since the condition in \fullref{val4}{int4} is symmetric in $a$ and~$b$, no additional examples are obtained by interchanging $a$ and~$b$.
	
	\item[\pref{val4-square}] The grid $\Qd(m, 2km, 2 \ell m)$ of~\fullref{Qd}{kl} is obtained from a direct translation of the conditions in~\fullref{val4}{square}. The condition that $m$ is odd ensures that the grid is not trivially unstable \csee{GridTriv}. However, the definition of~$\Qd(m,n,r)$ requires $m > 1$.
		
	Now, we let $a$ play the role of~$b$ in \cref{GridIsCayley}. 	
	Note that $\gcd(k, \ell) = 1$, because $4\ell^2 \equiv 1 \pmod{k}$. Therefore, we have
		\[ |a| = |G \colon \langle b \rangle| \cdot |\langle a \rangle \cap \langle b \rangle|
			= m \cdot \frac{2km}{\gcd(2 k m, 2 \ell m)}
			= m \cdot \frac{2km}{2m}
			= km
			. \]
	Hence, $|G : \langle a \rangle| = |G|/|a| = m(2km)/(km) = 2m$. Also, since $ma = 2 \ell m b$ and $4 \ell^2 \equiv \pm 1 \pmod{k}$ (and $|b| = 2km$), we have
		\[ 4 \ell m a = (4\ell)(2\ell m b) = 4 \ell^2 (2m b) = \pm 2mb . \]
So this yields the graph $\Qd(2m, km, \pm 4 \ell m)$ of~\fullref{Qd}{kl} (even if $m = 1$).

However, this graph has twin vertices (and is therefore trivially stable) if (and only if) $2m = 2$ and $4 \ell m \equiv \pm 2 \pmod{km}$ \csee{GridTriv}. This situation is ruled out by assuming (at the end of \fullref{Qd}{kl}) that either $m > 1$ or $2\ell \not\equiv \pm 1 \pmod{k}$.
	
	\item[\pref{val4-2a=2b}] This is trivially unstable \csee{val4triv}.
	
	\item[\pref{val4-MoebiusPrism}] The generating sets arising here are not of the form $\{\pm a, \pm b\}$, so \cref{GridIsCayley} cannot be applied. These Cayley graphs are therefore not needed to find all of the toroidal grids.
	\qedhere
	\end{itemize}
\end{proof}

\begin{rem} \label{val4triv}
It is easy to determine whether a particular Cayley graph listed in \cref{val4} is trivially unstable. First, note that the graph is assumed to be connected (i.e., it is assumed that $S$ generates~$G$). \Cref{val4pf-triv} of the proof shows that the examples with twin vertices are precisely those in~\pref{val4-2a=2b}. So all that remains is to determine which of them are bipartite (which is usually answered by \fullcref{GridTriv}{QdBip}).
	\begin{itemize}
	\item[\fullref{val4}{int4}] This is bipartite if and only if $|a|/4 + |b|/4$ is even. It has twin vertices if and only if $|a| = |b| = 8$ (in which case, it is also bipartite).
	\item[\fullref{val4}{square}] This is bipartite if and only if $m$ is even. It has twin vertices if and only if $m = 1$ and $2\ell \equiv \pm 1 \pmod{k}$.
	\item[\fullref{val4}{2a=2b}] As mentioned above, this graph has twin vertices, and is therefore trivially unstable. (For completeness, we observe that it is bipartite if and only if $|a|$ and $|b|$ are even.)
	\item[\fullref{val4}{MoebiusPrism}] This is bipartite if and only if $n$ is odd.
	(It never has twin vertices.)
	\end{itemize}
\end{rem}

To avoid cluttering the main part of the proof of \cref{val4}, we present one direction of the argument in the following \lcnamecref{val4unstable}. It is mostly (or entirely?) known: the instability of the graphs in \pref{val4-int4} and~\pref{val4-square} was proved by S.\,Wilson \cite[\S A.4.1]{Wilson}, and the rest is very easy.
However, Wilson gave only a one-sentence sketch of his proofs, so we will provide a fairly complete argument for every case.

\begin{lem}[cf.\ S.\,Wilson {\cite[\S A.4.1]{Wilson}}] \label{val4unstable}
All of the graphs listed in \cref{val4} are unstable.
\end{lem}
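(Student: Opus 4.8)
The plan is to verify instability separately for each of the four families in \cref{val4}, using the stability criteria already assembled in the preliminaries. For \pref{val4-int4} and \pref{val4-square} the natural tool is \cref{IsoS+z}: in each case we exhibit an element $\invol$ of order~$2$ in~$G$ and an abstract group isomorphism (or graph isomorphism) carrying $S$ to $S + \invol$, whence instability follows immediately. For \pref{val4-int4}, where $|\langle a\rangle \cap \langle b\rangle| = 4$, the element of order~$2$ should be the unique involution in that cyclic intersection, say $\invol = (|a|/2)a = (|b|/2)b$; one checks that the map swapping the two generators appropriately (or sending $a \mapsto a$, $b \mapsto b + \invol$ after a suitable reindexing) sends $\{\pm a, \pm b\}$ to $\{\pm a, \pm b + \invol\}$ and extends to a group automorphism because the intersection structure is preserved. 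For \pref{val4-square}, with $ma = 2\ell m b$, $|b| = 2km$ and $4\ell^2 \equiv \pm 1 \pmod k$, the relevant involution is $\invol = km\cdot b$, and the congruence on~$\ell$ is exactly what is needed to produce the group isomorphism matching $S$ with $S + \invol$ — this mirrors the circulant computation in \cref{val4circulant}\pref{val4circulant-k2=1}, and in fact we may reduce to that theorem by projecting onto~$\langle b\rangle$ when $G$ is not cyclic.

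For \pref{val4-2a=2b}, instability is immediate from \cref{twiniff}: the hypothesis $2a = 2b$ means $a - b$ is a nonzero element $\invol$ with $S + \invol = S$ (since $a + \invol = 2a - b = b$ and $b + \invol = a$), so the graph has twin vertices and is trivially unstable. For \pref{val4-MoebiusPrism}, the graph is $M_{2n}\cartprod K_2$, which is bipartite-covered in a degenerate way: since $K_2$ is bipartite, \cref{B(XboxBip)} gives $B(M_{2n}\cartprod K_2) \iso (BM_{2n})\cartprod K_2$, and $BM_{2n}$ is disconnected (it is two copies of a cycle, or more precisely the canonical double cover of a Moebius ladder, which for the relevant parity splits), so $\Aut$ of the cover is strictly larger than $\Aut(M_{2n}\cartprod K_2)\times S_2$; alternatively one invokes \cref{Aut(C4timesX)} together with the fact that $M_{2n}$ itself decomposes or has extra symmetry. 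The special subcase \pref{val4-MoebiusPrism-n=4} ($K_4\cartprod K_2$) can be handled directly or deferred to the computer-checked \cref{SmallZnxZ2}.

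The main obstacle I anticipate is \pref{val4-square} in the genuinely non-cyclic case: unlike the circulant situation of \cref{val4circulant}, here $G = \langle a,b\rangle$ may not be cyclic, so one must be careful that the order of~$a$ is $km$ (as computed via $|\langle a\rangle \cap \langle b\rangle| = 2km/\gcd(2km, 2\ell m) = 2m$ when $\gcd(k,\ell)=1$) and that the candidate isomorphism $\varphi$ sending $b \mapsto$ (some multiple of $b$) and $a \mapsto a + \invol$ is well-defined on the defining relation $ma = 2\ell m b$ — i.e. that $\varphi$ respects the extension structure of~$G$. This amounts to checking a congruence $(2\ell)^2 \equiv \pm 1 \pmod k$ lifts to an identity in~$G$, which should follow from the explicit presentation, but it is the one place where a short honest calculation with the group relations is unavoidable. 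Everything else is routine: invoking \cref{twiniff}, \cref{B(XboxBip)}, and \cref{SmallZnxZ2}, or quoting Wilson's construction in \cite[\S A.4.1]{Wilson}.
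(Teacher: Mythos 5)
Your overall strategy for parts \pref{val4-int4}, \pref{val4-square} and \pref{val4-2a=2b} matches the paper's: exhibit an order-$2$ element $\invol$ and a group automorphism carrying $S$ to $S+\invol$, then invoke \cref{IsoS+z}; and \pref{val4-2a=2b} is indeed disposed of by twin vertices exactly as you say. But the specific maps you sketch need repair. In \pref{val4-int4} the assignment $a \mapsto a$, $b \mapsto b + \invol$ generally fails to respect the relation $ma = rb$ (it would force $r\invol = 0$, i.e.\ $r$ even, which is not guaranteed); the paper's automorphism is $\varphi(a) = -a + \invol$, $\varphi(b) = b + \invol$, and verifying $m(-a+\invol) = r(b+\invol)$ uses that $m+r$ is odd. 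In \pref{val4-square} the correct automorphism \emph{swaps} $a$ and $b$ (sending $b \mapsto a + \invol$ and $a \mapsto \pm b + \invol$, after checking $|a+\invol| = |b|$), not ``$b \mapsto$ a multiple of $b$''; and ``projecting onto $\langle b\rangle$'' is not a valid reduction to \cref{val4circulant}, since instability of a subgraph or quotient does not transfer to $X$. These are fixable with the honest calculation you anticipate.

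The genuine gap is in \pref{val4-MoebiusPrism}. Since the lemma only needs to treat the non-bipartite graphs (bipartite ones are trivially unstable), we have $n$ even, so $M_{2n}$ is non-bipartite and $BM_{2n}$ is \emph{connected} --- it is the prism $C_{2n} \cartprod K_2$, not two disjoint cycles. Your disconnection argument therefore collapses precisely in the case that matters, and the fallback ``$M_{2n}$ decomposes or has extra symmetry'' is not an argument. The paper's route is a count: $BX \iso (BM_{2n}) \cartprod K_2 \iso (C_{2n} \cartprod K_2) \cartprod K_2 \iso C_{2n} \cartprod C_4$, so $|{\Aut BX}| \ge 4n \cdot 8 = 32n$, while for $n \ge 4$ \cref{Aut(C4timesX)} gives $|{\Aut X}| = 2\,|{\Aut M_{2n}}| = 8n$, whence $|{\Aut BX}| > 2\,|{\Aut X}|$; the case $n = 2$ ($K_4 \cartprod K_2$, with $BX \iso K_2 \cartprod K_2 \cartprod K_2 \cartprod K_2$) is settled by a separate direct count rather than by \cref{SmallZnxZ2}, whose generating set is different. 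Without some such comparison of automorphism group orders, part \pref{val4-MoebiusPrism} is not proved.
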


\begin{proof}
We consider each part of the statement of the \lcnamecref{val4} individually. We may assume each Cayley graph is not bipartite (for otherwise it is trivially unstable).

\pref{val4-int4} (S.\,Wilson \cite[Thm.~Q.1, p.~380]{Wilson})
Let $m = |G : \langle b \rangle|$ and $n = |b|$, and choose $r \in \ZZ$, such that $ma = rb$. 
Also, let $\invol$ be the element of order~$2$ in $\langle a \rangle \cap \langle b \rangle$. Since $|b|$ is divisible by~$4$, we know that $|b + \invol| = |b| = n$, so $|G : \langle b + \invol \rangle| = |G : \langle b \rangle| = m$. 
Also note that, since $|\langle a \rangle \cap \langle b \rangle| = 4$, we have $|rb| = 4$, so $-rb = rb + \invol = rb + (m + r)\invol$ (since $m + r$ is odd, because the Cayley graph is not bipartite). Therefore
	\[ m(-a + \invol) = -ma + m\invol = -rb + m\invol = rb + (m + r)\invol + m\invol = r(b + \invol) . \]
Therefore, there is an automorphism~$\varphi$ of~$G$, such that $\varphi(a) = -a + \invol$ and $\varphi(b) = b + \invol$. Then $\varphi$ is an isomorphism from $\Cay(G; S)$ to $\Cay(G; S + \invol)$. This implies that $\Cay(G;S)$ is unstable \csee{IsoS+z}.

\pref{val4-square} (S.\,Wilson \cite[Thm.~Q.2, p.~381]{Wilson})
Let $n = 2km = |b|$ and $r = 2\ell m$, so $ma = rb$.
Also, let $\invol$ be the element of order~$2$ in~$\langle b \rangle$. 
Note that:
	\begin{itemize}
	\item $m$ is odd, because $X$ is bipartite and $r = 2\ell m$ is even,
	and
	\item $\gcd(2\ell, k) = 1$, because $4\ell^2 \equiv 1 \pmod{k}$ (so $k$ is odd).
	\end{itemize}
Then, since $|b| = 2km$ and $km$ is odd, we see that $|b + \invol| = km$. Also (using the fact that $\gcd(\ell, k) = 1$), we have
	\[ |a| 
		= m \cdot \frac{|b|}{\gcd(r, |b|)}
	 	= m \cdot \frac{2 k m}{\gcd(2\ell m, 2km)}
		= m \cdot  k . \]
Since $mk$ is odd, this implies $|a + \invol| = 2 mk = |b|$.

Also note that, since $4 \ell^2 = p k \pm 1$ for some $p \in \ZZ$ (and $p$ must be odd), we have 
	\[ 4 \ell^2 b = (p k \pm 1) b = pkb \pm b = p\invol \pm b = \invol \pm b. \]
Therefore
	\[ r (a + \invol)
	= 2 \ell m(a + \invol) 
	= 2 \ell r b + 0
	 = 2 \ell (2 \ell m b) 
	= 4\ell^2 m b
	= m(\invol \pm b)
	= \pm m(b + \invol)
	, \]
Hence, there is an automorphism~$\varphi$ of~$G$, such that $\varphi(b) = a + \invol$ and either $\varphi(a) = b + \invol$ or $\varphi(a) = -b + \invol$. In either case, $\varphi$ is an isomorphism from $\Cay(G; S)$ to $\Cay(G; S + \invol)$. This implies that $\Cay(G;S)$ is unstable \csee{IsoS+z}.

\pref{val4-2a=2b} $\Cay(G;S)$ has twin vertices \csee{val4triv}, so it is trivially unstable.

\pref{val4-MoebiusPrism}
Since the Cayley graph is not bipartite, we know that $n$ is even.
By \cref{B(XboxBip)}, we have
	\[ BX = B(M_{2n} \cartprod K_2)
		\iso (BM_{2n}) \cartprod K_2 
		\iso (C_{2n} \cartprod K_2) \cartprod K_2
		\iso C_{2n} \cartprod C_4
		. \]
So $|{\Aut BX}| \ge |{\Aut C_{2n}}| \cdot |{\Aut C_{4}}| = 4n \cdot 8 = 32 n$. 

If $n \ge 4$, then
	\[ |{\Aut X} | = |{\Aut (M_{2n}} \cartprod K_2) | = 2 |{\Aut M_{2n}}| = 8n < \frac{1}{2} |{\Aut BX}| , \]
so $X$ is unstable.

For the special case where $n = 2$, we have
 $X \iso K_4 \cartprod K_2$, so .
	\[ |{\Aut X}| = |{\Aut K_4}| \cdot |{\Aut K_2}| = 4! \cdot 2 . \]
However,
	\[ BX \iso (BK_4) \cartprod K_2 
	\iso (K_2 \cartprod K_2 \cartprod K_2 ) \cartprod K_2 , \]
so $|{\Aut BX}| = 4! \cdot 2^4 \gg 2 \, |{\Aut X}|$. Therefore $X$ is unstable.
\end{proof}

\begin{proof}[\bf Proof of \cref{val4}]
($\Leftarrow$) See \cref{val4unstable}.

\medbreak

($\Rightarrow$) Let $X = \Cay(G; S)$, and assume that $X$ is connected and unstable, but not bipartite. We will show that $X$ is in the list. 

\refstepcounter{caseholder} 

\begin{case} \label{val4pf-triv}
Assume $X$ is trivially unstable.
\end{case}
Since $X$ is assumed to be connected and nonbipartite, it must have twin vertices. Therefore, $S$ is a union of cosets of some subgroup~$\langle \twin \rangle$ of prime order \csee{twiniff}. Since $X$ has valency~$4$, we know that $|S| = 4$, so we must have $|\twin| = 2$ (since $|\twin|$ is a prime number that divides~$|S|$).

\begin{subcase}
Assume $S = \{\pm a, \pm b\}$, where $|a|, |b| > 2$.
\end{subcase}
We may assume $a + \twin \in \{-a, b\}$ (perhaps after replacing~$b$ with its negative). 
	\begin{itemize}
    	\item If $a + \twin = -a$, then $2a = \twin$ (and $-a + \twin = a$). Also, $b + \twin \notin \{\pm a\}$, so we must have $b + \twin = -b$, which implies $2b = \twin$. Therefore $2a = \twin = 2b$, so \pref{val4-2a=2b} is satisfied.
	\item If $a + \twin = b$, then $2b = 2(a + \twin) = 2a + 2\twin = 2a + 0 = 2a$, so \pref{val4-2a=2b} is satisfied.
	\end{itemize}

\begin{subcase}
Assume $S = \{\pm a, b, c\}$, where $|a| > 2$ and $|b| = |c| = 2$.
\end{subcase}
Since $b + \twin \in S$ and $2(b + \twin) = 2b + 2\twin = 0 + 0 = 0$, we must have $b + \twin = c$ (and hence $c + \twin = b$). So $a + \twin = -a$, which implies $\twin = 2a$ (and $|a| = 4$). Now, since $X$ is not bipartite, there exist $p,q,r \in \ZZ$, such that $pa + qb + rc = 0$ and $p + q + r$ is odd. Then
	\[ 0 = pa + qb + rc = pa + qb + r(b + 2a) \equiv (q + r) b \pmod{a} . \]
If $q + r$ is odd, this implies $b \in \langle a \rangle$, so $b = \twin$ (since $\twin$ is the unique element of order~$2$ in~$\langle a \rangle$. But then $c = b + \twin = \twin + \twin = 0$, which contradicts the fact that $|c| = 2$.

So $q + r$ is even. Therefore $p$ is odd, so $pa = \pm a$ (since $|a| = 4$). Then $\pm a = -(q b + rc) \in \langle b, c \rangle \iso \ZZ_2 \times \ZZ_2$. This is impossible, since $|a| = 4$.

\begin{subcase}
Assume $S = \{ a, b, c, d\}$, where $|a| = |b| = |c| = |d| = 2$.
\end{subcase}
We may assume, without loss of generality, that $a + \twin = b$ and $c + \twin = d$, and also, since $X$ is not bipartite, that $a + b + c = 0$. But then 
	\[ c = a + b = a + (a + \twin) = 0 + \twin = \twin, 
	\quad \text{so} \quad
	 d = c + \twin = c + c = 0, \]
which contradicts the fact that $|d| = 2$.

\begin{pfassump} \label{val4pf-nontriv}
In the remaining cases of the proof, we assume that $X$ is nontrivially unstable.
\end{pfassump}

\begin{case} \label{val4-cyclic}
Assume $G$ is cyclic.
\end{case}
We see from \cref{val4circulant} that $X$ is listed in either \pref{val4-int4} or \pref{val4-square} (with $m = 1$).

\begin{case}
Assume that $S$ contains at least one element of order~$2$.
\end{case}
If every element of~$S$ has order~$2$, then (since $X$ is not bipartite) it is not difficult to see that $X$ is the Cayley graph $K_4 \cartprod K_2$ that is listed in~\pref{val4-MoebiusPrism-n=4}. 

Therefore, we may assume that $S$ contains precisely two elements of order~$2$, so we may write $S = \{a,b,\pm c\}$, where $|a| = |b| = 2$ and $|c| \ge 3$. Since $\langle c \rangle$ has at most one element of order~$2$, we have $| \langle a, b \rangle \cap \langle c \rangle| \in \{1,2\}$. Let $n = |c|$.

\begin{subcase}
Assume $| \langle a, b \rangle \cap \langle c \rangle| = 1$.
\end{subcase}
Then 
	\[ X \iso C_n \cartprod C_4 \iso \Cay \bigl( \ZZ_n \times \ZZ_4; \pm(1,0), \pm(0,1) \bigr) . \]
This Cayley graph has no elements of order~$2$ in the generating set, so it is considered in a later case. Also note that $n$ must be odd, since $X$ is not bipartite. Then it is not difficult to see that this Cayley graph is not listed in any of the parts of the statement of the \lcnamecref{val4}, so it is stable.

\begin{subcase}
Assume $| \langle a, b \rangle \cap \langle c \rangle| = 2$, but $\langle c \rangle \cap \{a,b\} = \emptyset$.
\end{subcase}
Then $X$ is a prism with $2n$ vertices, plus an edge from each vertex to its antipodal vertex. (Note that $n$ must be even, since $\langle c \rangle$ has a subgroup of order~$2$.) Therefore, it is not difficult to see that $X \iso \Cay(G'; \pm x, \pm y)$, where  
	\[ G' = \langle\, x,y \mid 4x = ny = 0, \ 2x = (n/2)y , \ x + y = y + x \, \rangle . \]
The generating set of this abelian Cayley graph has no elements of order~$2$, so the Cayley graph is considered in a later case. Since $X$ is not bipartite, $n/2$ must be odd. Therefore, it is not difficult to see that this Cayley graph is not listed in any of the parts of the statement of the \lcnamecref{val4}, so it is stable.

\begin{subcase}
Assume $\langle a, b \rangle \cap \langle c \rangle = \langle a \rangle$.
\end{subcase}
Then $X \iso M_n \cartprod K_2$ is listed in~\pref{val4-MoebiusPrism}.

\begin{pfassump}
In the remaining cases of the proof, we assume that $S$ has no elements of order~$2$. Therefore, we may write 
	\[ \text{$S = \{\pm a, \pm b\}$, where $|a|, |b| > 2$.} \]
\end{pfassump}

\begin{case} \label{val4pf-aut}
Assume there is a group automorphism~$\alpha$ of $G \times \ZZ_2$, such that $\alpha$ is an automorphism of $BX$, and $\alpha(0,1) \neq (0,1)$.
\end{case}
Since $G \times \{0\}$ and $G \times \{1\}$ are the bipartition sets of $BX$, we know that each of these sets is $\alpha$-invariant.
Therefore
	\begin{itemize}
	\item $\alpha(g,0) = \bigl( \varphi(g), 0 \bigr)$, for some automorphism~$\varphi$ of~$G$,
	and
	\item $\alpha(0,1) = (\invol, 1)$, for some element~$\invol$ of order~$2$.
	\end{itemize}
Since $\alpha$ is an automorphism of~$BX$, we must have $\alpha \bigl(S \times \{1\} \bigr) = S \times \{1\}$, so $\varphi(S) = S + \invol$. Therefore $\varphi$ is an isomorphism from $\Cay(G; S)$ to $\Cay(G; S + \invol)$.

We may assume (by interchanging $a$ and~$b$, if necessary) that $|b|$ is divisible by (at least) the largest power of~$2$ that divides~$|a|$. By \cref{odd} (and \cref{val4pf-nontriv}), this implies that
	\[ \text{$|b|$ is even.} \]
Let 
	\[ m = |G : \langle b \rangle| . \]
Then $ma \in \langle b \rangle$, so we may choose $r \in \{0,1,\ldots,|b| - 1\}$, such that 
	\[ ma + rb = 0 . \]
Since $X$ is not bipartite, we know that 
	\[ \text{$m + r$ is odd.} \]

\begin{subcase}
Assume $\varphi(b) \in \{\pm b + \invol\}$.
\end{subcase}
Since $\varphi$ is a homomorphism (and $|\invol| = 2$), this implies that $\varphi \bigl( \{ \pm b \} \bigr) = \{ \pm b + \invol \}$. Then, since $\varphi$ is a bijection from $S$ to $S + \invol$, we must have $\varphi(a) = \epsilon a + \invol$, for some $\epsilon \in \{\pm 1\}$. 
We may assume, without loss of generality, that $\varphi(b) = b + \invol$ (by composing with the automorphism $x \mapsto -x$, if necessary).
Then
	\begin{align*}
	0
	&= \varphi(ma + rb)
		&& \text{($ma + rb = 0$)}
	\\&= m \, \varphi(a) + r \, \varphi(b)
		&& \text{($\varphi$ is a group automorphism)}
	\\&= m \, (\epsilon a + \invol) + r \, (b + \invol)
	\\&= \epsilon m a + r b + \invol
		&& \text{($|\invol| = 2$ and $m + r$ is odd)}
	. \end{align*}
If $\epsilon = 1$, then $\epsilon ma + r b = ma + r b = 0$, so $\invol = 0$, which contradicts the fact that $|\invol| = 2$.

Therefore, we must have $\epsilon = -1$, so $-ma + rb = \invol$. Subtracting this from the equation $ma + rb = 0$, we conclude that $2ma = \invol$ has order~$2$, so $ma$ has order~$4$. Thus, the Cayley graph is listed in~\pref{val4-int4}.

\begin{subcase}
Assume $\varphi(b) \in \{\pm a + \invol\}$.
\end{subcase}
We may assume, without loss of generality, that $\varphi(b) = a + \invol$ (by composing with the automorphism $x \mapsto -x$, if necessary). We have $\varphi(a) = \epsilon b + \invol$, for some $\epsilon \in \{\pm 1\}$.

Note that $|b| = |\varphi(b)| = |a + \invol|$.
Therefore, either $|b| = 2 |a|$ (and $|a|$ is odd) or $|b| = |a|$. Hence, $\gcd\bigl( |b| , r \bigr) \in \{m, 2m\}$. If $m$ is even, this implies that $r$~is also even, which contradicts the fact that $m + r$ is odd. Therefore
	$m$ is odd, so $r$ is even. 
Hence, we must have $\gcd\bigl( |b| , r \bigr) = 2m$, so 
we may write 
 	\[ \text{$|b| = 2 k m$ \ and \ $r = 2 \ell m$, \quad for some $k,\ell \in \ZZ$.} \]

We have
	\begin{align*}
	0
	&= \varphi(ma + rb)
		&& \text{($ma + rb = 0$)}
	\\&= m \, \varphi(a) + r \, \varphi(b)
		&& \text{($\varphi$ is a group automorphism)}
	\\&= m \, (\epsilon b + \invol) + r \, (a + \invol)
	\\&= ra + \epsilon m b + \invol
		&& \text{($|\invol| = 2$ and $m + r$ is odd)}
	\\&= 2 \ell m a + \epsilon m b + \invol
		&& \text{(definition of~$\ell$)}
	. \end{align*}
We also have
	\[ 2 \ell m a + 4 \ell^2 m b = 2\ell(ma + rb) = 2\ell(0) = 0 , \]
so, by subtracting these two equations, we conclude that $(4 \ell^2 - \epsilon)mb = \invol$. Since $|b| = 2km$, and $|\invol| = 2$, then $4 \ell^2 \equiv \epsilon \pmod{k}$. Thus, the Cayley graph is listed in~\pref{val4-square}.

\begin{pfremark}
The remaining cases are copied almost verbatim from the analogous cases in \cite[proof of Thm.~4.3]{HMMlow}.
\end{pfremark}

\begin{case} \label{val4-neq}
Assume $2s \neq 2t$, for all $s,t \in S$, such that $s \neq t$.
\end{case}
By \cref{StableIffStabilizer}, there is an automorphism~$\alpha$ of~$BX$ that fixes $(0,0)$, but does not fix $(0,1)$.
We may assume $\alpha$ is not a group automorphism, for otherwise \cref{val4pf-aut} applies. 
Therefore, \cref{4cyclenormal} implies there exist $s,t,u,v\in S$ such that  $s + t = u + v \neq 0$ and $\{s,t\} \neq \{u,v\}$. From the assumption of this \lcnamecref{val4-neq},  we see that this implies $3b = \pm a$ (perhaps after interchanging $a$ with~$b$). Then 
	\[ \text{$G = \langle a, b \rangle = \langle 3b, b \rangle = \langle b \rangle$ is cyclic} , \]
so \cref{val4-cyclic} applies.

\begin{case} \label{val4-remain}
The remaining case.
\end{case}
Since \cref{val4-neq} does not apply, we have $2s = 2t$, for some $s,t \in S$, such that $s \neq t$. 

\begin{subcase}
Assume that $t = -s$. 
\end{subcase}
Then $|s| = 4$. Therefore, if we assume, without loss of generality, that $s = a$, then we have $i \coloneqq | \langle a \rangle \cap \langle b \rangle | \in \{1,2,4\}$. In all cases, we will show that $G$ is cyclic, so \cref{val4-cyclic} applies.

If $i = 1$, then $G = \langle a \rangle \times \langle b \rangle$. Since $X$ is not bipartite, this implies $|b|$ is odd, so $\gcd \bigl( |a|, |b| \bigr) = 1$. Therefore $G$ is cyclic.

If $i = 2$, then there is some $k \in \ZZ$, such that $a^2 = b^k$. Since $X$ is not bipartite, we know $k$ is odd. So $\langle a^2 \rangle$ has odd index in $\langle b \rangle$. This implies that $\langle a \rangle$ has odd index in~$G$ and is therefore a Sylow $2$-subgroup. So the Sylow $2$-subgroup of~$G$ is cyclic.  All of the other Sylow subgroups of~$G$ are contained in $\langle b \rangle$, and are therefore also cyclic.  So $G$ is an abelian group whose Sylow subgroups are cyclic. Therefore $G$ is cyclic.

If $i = 4$, then $a \in \langle b \rangle$, so $G = \langle a,b \rangle = \langle b \rangle$ is cyclic.

\begin{subcase}
Assume that $t \neq -s$. 
\end{subcase}
Therefore, we may assume $s = a$ and $t = b$, so $2a = 2b$. If we let $\invol = b - a$, this implies that $2\invol = 0$, so $\invol = -\invol$. Then $a = b + \invol$ and $b = a - \invol = a + \invol$, so $S = S + \invol$, which contradicts the assumption that $\Cay(G: S)$ is nontrivially unstable (and therefore has no twin vertices).
\end{proof}

\section{Some unstable abelian Cayley graphs of valency 6} \label{val6sect}

In this \lcnamecref{val6sect}, we prove the following \lcnamecref{val6}, which implies \cref{Tr}.
(Although the title of this \lcnamecref{val6sect} specifies ``valency~6\rlap,'' the \lcnamecref{val6} also applies to some graphs of smaller valency, because $a$, $b$, and/or~$c$ may have order~$2$.)

\begin{thm} \label{val6}
Let $\{a, b, c\}$ be a generating set of a finite abelian group~$G$, such that 
	\[ \text{$a + b + c = 0$
	\quad and \quad
	the sets $\{\pm a\}$, $\{\pm b\}$, $\{\pm c\}$ are distinct.} \]
The Cayley graph $X = \Cay(G; \pm a, \pm b, \pm c)$ is unstable if and only if one of the following conditions is satisfied \textup(perhaps after permuting $a$, $b$, and~$c$\kern1pt\textup{):}
	\begin{enumerate}

	\item \label{val6-4}
  	$|a| = 4$ and $|G|$ is divisible by~$8$.

	\item \label{val6-2a=2b}
	$2a = 2b$ and $|G|$ is divisible by~$8$.
	
	\item \label{val6-8}
	$|a| = 8$ and $b = 3a$.

	\item \label{val6-cyclic12}
	$|a| = 12$ and $b = 4 a$.
	
	\item \label{val6-9}
	$|a| = |b| = 3$.

	\end{enumerate}
\end{thm}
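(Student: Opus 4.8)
\medbreak
The plan is to follow the template of \cref{val4sect}: prove the direction ($\Leftarrow$) in a separate lemma, and then establish ($\Rightarrow$) by a case analysis resting on the circulant classifications and on \cref{triangles}. Two structural observations are used throughout. First, the hypothesis $a + b + c = 0$ makes $0$, $a$, $a + b = -c$ the vertices of a triangle, so $X$ is never bipartite; hence ``unstable'' splits into ``trivially unstable with twin vertices'' and ``nontrivially unstable.'' Second, the same hypothesis forces every edge of~$X$ to lie on a triangle: an edge $\{v, v + s\}$ with $s \in S$ is a translate of $\{0, a\}$, $\{0, b\}$, or $\{0, c\}$, and $\{0, a\}$ lies on the triangle $\{0, a, -c\}$; this is what lets \cref{triangles} be applied.

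For ($\Leftarrow$): the families in \fullcref{val6}{8} and \fullcref{val6}{cyclic12} force $G$ to be cyclic -- in \fullcref{val6}{8} we have $c = -4a$ of order~$2$ and $G = \langle a \rangle \iso \ZZ_8$, so $X \iso \Cay(\ZZ_8; \pm1, \pm3, 4)$ is the graph of \fullcref{val5circulant}{8}, while in \fullcref{val6}{cyclic12}, $G = \langle a \rangle \iso \ZZ_{12}$ and $X \iso \Cay \bigl( \ZZ_{12}; \pm4, \pm(4+3), \pm(4-3) \bigr)$ is the $(a,k) = (4,3)$ instance of \fullcref{val6circulant}{C2} -- and \fullcref{val6}{9} is the $n = 3$ instance of \cref{Z3xZn}. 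For \fullcref{val6}{4} and \fullcref{val6}{2a=2b} I would first dispatch the cyclic subcases via \cref{val6circulant}, and in the non-cyclic subcases invoke \cref{IsoS+z}: when $|a| = 4$ the element $\invol = 2a$ has order~$2$, and $a \mapsto a$, $b \mapsto c + 2a$, $c \mapsto b + 2a$ extends to a group automorphism of~$G$ (this is where $8 \mid |G|$ provides the room needed) carrying $S$ to $S + \invol$; the case $2a = 2b$ is the same argument with $\invol = b - a$, or alternatively \cref{LargelyFixing} applied to a suitable automorphism of~$X$.

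For ($\Rightarrow$): let $X$ be connected and unstable. If $X$ is trivially unstable it has twin vertices (being connected and nonbipartite), so $S = S + \invol$ for some $\invol$ of prime order by \cref{twiniff}; analysing the partition of $\{\pm a, \pm b, \pm c\}$ into cosets of $\langle\invol\rangle$, together with $a + b + c = 0$, places $X$ in \fullcref{val6}{4}, \fullcref{val6}{2a=2b}, \fullcref{val6}{8}, or \fullcref{val6}{cyclic12} when $|\invol| = 2$, and in \fullcref{val6}{9} when $|\invol| = 3$ (consistent with \cref{odd}, since that graph has order~$9$). Otherwise $X$ is nontrivially unstable, so $|G|$ is even by \cref{odd}. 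If $G$ is cyclic I apply \cref{val3,val5circulant,val6circulant} according to the valency of~$X$ -- which is $3$, $5$, or $6$ depending on how many of $a, b, c$ have order~$2$ (valency~$4$ cannot occur, since two generators of order~$2$ together with $a + b + c = 0$ force all three to have order~$2$, giving valency~$3$) -- and then intersect each listed circulant family with the constraint $a + b + c = 0$, recovering exactly \fullcref{val6}{4}, \fullcref{val6}{2a=2b}, \fullcref{val6}{8}, \fullcref{val6}{cyclic12}.

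Finally, suppose $X$ is nontrivially unstable with $G$ non-cyclic. Here \cref{triangles} is the main engine: condition~(1) always holds, so instability forces condition~(2) to fail, a strong restriction on the distances in~$X$. Translating this restriction into relations among $a, b, c$ should force, after permutation, that one of $a, b, c$ has order~$4$, or that $2a = 2b$, or that $G$ is one of finitely many small groups. In the order-$4$ case I obtain \fullcref{val6}{4} once I note that a non-cyclic $G$ with an element~$a$ of order~$4$ must satisfy $8 \mid |G|$ (otherwise its Sylow $2$-subgroup is $\langle a \rangle \iso \ZZ_4$, forcing $G \iso \ZZ_4 \times H$ with $|H|$ odd, hence $G$ cyclic). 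The case $2a = 2b$ yields \fullcref{val6}{2a=2b} after a similar check that non-cyclic~$G$ forces $8 \mid |G|$. The remaining small groups -- chiefly $\ZZ_n \times \ZZ_2$ and $\ZZ_n \times \ZZ_3$ for small~$n$ -- are settled by \cref{SmallZnxZ2,Z3xZn}. (When $S$ happens to satisfy the $4$-cycle hypothesis of \cref{4cyclenormal}, one can alternatively argue as in the proof of \cref{val4}, since $BX = \Cay(G \times \ZZ_2; S \times \{1\})$ then also satisfies that hypothesis, reducing instability to a group automorphism $\varphi$ with $\varphi(S) = S + \invol$ and using $\pm a \pm b \pm c \in \{0, \pm2a, \pm2b, \pm2c\}$ to pin down $|a| = 4$.) I expect the main obstacle to be exactly this non-cyclic nontrivially-unstable analysis: extracting from the failure of condition~(2) of \cref{triangles} the precise finite list of admissible relations among $a, b, c$ (equivalently, showing that every graph outside the five listed families either meets the hypotheses of \cref{triangles} or lies among the small groups), together with the bookkeeping of matching the six circulant families of \cref{val6circulant} against the five conditions here under the side constraint $a + b + c = 0$ and accounting for the drop in valency when one of $a, b, c$ has order~$2$.
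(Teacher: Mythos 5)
Your outline assembles most of the right ingredients (the circulant classifications, \cref{IsoS+z}, \cref{odd}, the twin-vertex analysis, \cref{SmallZnxZ2,Z3xZn}), but the part you yourself flag as the main obstacle --- the non-cyclic, nontrivially unstable case --- contains genuine gaps rather than just unfinished bookkeeping. First, \cref{triangles} cannot serve as the ``main engine'' there: it is a sufficient condition for stability phrased in terms of distances, and nothing in your sketch converts ``condition (2) fails'' into a finite list of relations among $a,b,c$. The paper applies \cref{triangles} only to the very concrete family $\Cay\bigl(\ZZ_n\times\ZZ_2;\pm(1,0),\pm(1,1),(0,1)\bigr)$ with $n\ge 8$, where the distance structure can be checked by hand. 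For the general non-cyclic case the actual engine is \cref{4cyclenormal}, which you relegate to a parenthetical alternative: either the destabilizing automorphism of $BX$ is a group automorphism, which forces $\varphi(S)=S+\invol$ and hence an element of order~$4$, or it is not, and then \cref{4cyclenormal} yields $s+t=u+v\neq0$ with $\{s,t\}\neq\{u,v\}$, which pins down the admissible relations ($3a=\pm c$, $2a=\pm b\pm c$, and so on). Even then one does not land on ``finitely many small groups'': the relation $3a=0$ with $a\notin\langle b\rangle$ produces the infinite family $\ZZ_3\times\ZZ_n$, and \cref{Z3xZn} only covers $n\le 12$; the paper needs a separate combinatorial argument (counting walks of length $3$ and~$6$ in $BX$ to show that $\{\bx 0, 3\bx b, 3\bx c\}$ is invariant and that $\bx 0$ is distinguished within it) to prove stability for $n>12$.

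Second, your claim that in the case $2a=2b$ a non-cyclic $G$ ``forces $8\mid|G|$'' is false: take $G=\ZZ_{10}\times\ZZ_2$, $a=(1,0)$, $b=(1,1)$; then $2a=2b$, $G$ is non-cyclic, and $|G|=20$. Since \fullcref{val6}{2a=2b} requires $8\mid|G|$, what must actually be proved here is that the graph is \emph{stable} when $8\nmid|G|$ --- there is no group-theoretic shortcut. The paper does this with a bespoke argument: when $2a=2b$, the vertices $v\pm2\bx c$ are the only ones joined to $v$ by a unique path of length~$2$ in $BX$, which makes each set $\{\pm k\bx c\}$ invariant under the destabilizing automorphism; combined with $n/2$ odd and $n>8$ this forces $(0,1)$ to be fixed, a contradiction. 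Without an argument of this kind (or some substitute), the direction ($\Rightarrow$) is not established for the $2a=2b$ configuration, and the reduction of the whole non-cyclic case to ``order~$4$, or $2a=2b$, or small groups'' remains an expectation rather than a proof.
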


Before proving this \lcnamecref{val6}, let us show that it implies \cref{Tr}.

\begin{proof}[\bf Proof of \cref{Tr}]
As in the proof of \cref{Qd}, we use \cref{GridIsCayley}. We will show that the graphs in \cref{Tr} are precisely those that arise from \cref{val6} (and are not the two trivially unstable graphs in \fullcref{GridTriv}{TrTwin}), and satisfy the additional assumption that $m,n \ge 2$ (where $n = |b|$ and $m = |G : \langle b \rangle|$).
To do this, we consider each part of the statement of \cref{val6} individually. We also consider appropriate permutations of $a$, $b$, and~$c$. This is made easier by the observation that \cref{abyc} determines the result of interchanging $a$ with~$c$.

\pref{val6-4}
Since $|a| = 4$, we must have $m \in \{1, 2,4\}$.
Since $|G|$ is divisible by~$8$, but $|a|$ is not, we know that $n$ and~$r$ are even.
	\begin{itemize}
	\item If $m = 4$, then $r = 0$ (because $ma = 4a = 0$). This yields the graph $\Tr(4, 2k, 0)$ of~\fullref{Tr}{4/4k/0}. By \cref{abyc}, this is isomorphic to $\Tr(4, 2k, 4)$.
	
	By using $a$ in the role of~$b$ in \cref{GridIsCayley}, we obtain $\Tr(2k, 4, 0)$, which is also listed in~\fullref{Tr}{4/4k/0}. If $k$ is even, then applying \cref{abyc} does not give anything new. However, if $k$ is odd, then this graph is isomorphic to $\Tr(2k, 4, 2)$, as mentioned at the end of the statement of \cref{Tr}.
	
	\item If $m = 2$, then $r = n/2$ (because $ma = 2a$ has order~$2$). Since $r$ is even, this yields the graph $\Tr(2, 4k, 2k)$ of~\fullref{Tr}{2k/4/2}.
	By \cref{abyc}, this is isomorphic to $\Tr(2, 4k, 2 - 2k) = \Tr(2, 4k, 2k + 2)$.

	By using $a$ in the role of~$b$ in \cref{GridIsCayley}, we obtain $\Tr(2k, 4, 2)$, which is also listed in~\fullref{Tr}{2k/4/2}. If $k$ is even, then, as above, applying \cref{abyc} does not give anything new. However, if $k$ is odd, then this graph is isomorphic to $\Tr(2k, 4, 0)$, as mentioned at the end of the statement of \cref{Tr}.
	
	\item If $m = 1$, then we cannot directly apply \cref{GridIsCayley}, because the definition of $\Tr(m,n,r)$ requires $m > 1$. 
	
	However, we may use $a$ in the role of~$b$. This yields $\Tr(2k, 4, r)$, and we have $r \in \{\pm1\}$ because $\langle b \rangle = G$ (since $m = 1$). Therefore, the graph is listed in \fullref{Tr}{2k/4/pm1}. If $k$ is even, then the two graphs are isomorphic (by \cref{abyc}), as mentioned at the end of the statement of \cref{Tr}. The graph $\Tr(2, 4, 1)$ is trivially unstable \csee{GridTriv}, so we require $k > 1$ in $\Tr(2k, 4, 1)$.
	\end{itemize}

\pref{val6-2a=2b}
Write $|G| = 8k$.
	\begin{itemize}
	\item Suppose, for the moment, that $\langle b \rangle = G$. Since $2a = 2b$, but $a \neq b$, we must have $a = (4k + 1) b$. Then $a$ and~$b$ both generate~$G$, so neither can play the role of~$b$ in \cref{GridIsCayley}. However, we may let $c$ play this role. Note that $c = -(a + b) = -(4k + 2) b = (4k - 2)b$, so $|G : \langle c \rangle| = 2$ and
	\[ 2a = 2b = (2k - 1)(4k - 2)b = (2k - 1) c , \]
so this yields the graph $\Tr \bigl( 2, 4k, -(2k - 1) \bigr) = \Tr(2, 4k, 2k + 1)$, which is listed in~\fullref{Tr}{2/4k/2k+1}. Applying \cref{abyc} to this graph does not yield anything new.
	
	\item We may now assume $\langle b \rangle \neq G$. Then, since $2a = 2b$, we have $m = r = 2$ and $n = |G|/m = 8k/2 = 4k$. So this yields the graph $\Tr(2, 4k, -2)$ of~\fullref{Tr}{4/2k/2}. By \cref{abyc}, this is isomorphic to $\Tr(2, 4k, 4)$.

Note that $4a = 2a + 2a = 2a + 2b = 2(a+b) = -2c$. Therefore, by using $c$ in the role of~$b$, we obtain the graph $\Tr(4, 2k, 2)$ of~\fullref{Tr}{4/2k/2}. Applying \cref{abyc} to this graph does not give us anything new.
	\end{itemize}

\pref{val6-8} Since $\langle a \rangle = \langle b \rangle = G$, neither~$a$ nor~$b$ can play the role of~$b$ in \cref{GridIsCayley}. Letting $c$ play the role of~$b$ yields the graph $\Tr(4, 2, 1)$ (because $c = -(a + b) = -4a$ has order~$2$). This is listed in~\fullref{Tr}{4/23/pm1}. Applying \cref{abyc} to this graph does not yield anything new.

\pref{val6-cyclic12}
Since $G = \langle a \rangle$, we have $|G| = |a| = 12$. Then $n = |b| = 3$ and $m = |G|/|b| = 4$. Also, $ma = 4a = b$, so $r = 1$. Therefore, we have the graph $\Tr(4, 3, -1)$ of~\fullref{Tr}{4/23/pm1}. Applying \cref{abyc} to this graph does not yield anything new.  

Since $\langle a \rangle = \langle c \rangle = G$, neither $a$ nor~$c$ can play the role of~$b$ in \cref{GridIsCayley}.

\pref{val6-9} This graph is trivially unstable \fullcsee{val6triv}{3}.
\end{proof}

To shorten the main argument, we establish three minor results that deal with parts of the proof of \cref{val6}.

\begin{lem} \label{val6triv}
Let $\{a, b, c\}$ be a generating set of a finite abelian group~$G$, such that 
	\[ \text{$a + b + c = 0$
	\quad and \quad
	the sets $\{\pm a\}$, $\{\pm b\}$, $\{\pm c\}$ are distinct.} \]
The Cayley graph $X = \Cay(G; \pm a, \pm b, \pm c)$ has twin vertices \textup(or, equivalently, is trivially unstable\textup) if and only if \textup(perhaps after permuting $a$, $b$, and~$c$\textup) either:
	\begin{enumerate}
	\item \label{val6triv-8}
	$|a| = 8$ and $b = 2a$,
	or
	\item \label{val6triv-3}
	$|a| = |b| = 3$.
	\end{enumerate}
In each case, the Cayley graph is listed in \cref{val6}.
\end{lem}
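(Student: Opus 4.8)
The plan is to detect twin vertices via \cref{twiniff}: $X$ has twin vertices if and only if $S+\invol=S$ for some element $\invol$ of some prime order~$p$, where $S=\{\pm a,\pm b,\pm c\}$. The preliminary observations are that, since $a+b+c=0$, if two of $a,b,c$ have order~$2$ then so does the third, so the hypothesis that $\{\pm a\},\{\pm b\},\{\pm c\}$ are distinct forces $|S|\in\{3,5,6\}$ according as three, exactly one, or none of $a,b,c$ have order~$2$; the map $\tau\colon x\mapsto x+\invol$ is a fixed-point-free permutation of~$S$ of order~$p$, so $p\mid|S|$; the map $\sigma\colon x\mapsto -x$ is an involution of~$S$ with $\sigma\tau\sigma=\tau^{-1}$ whose fixed points are exactly the order-$2$ elements of~$S$; and, since $\langle-\invol\rangle=\langle\invol\rangle$, both $\sigma$ and~$\tau$ permute the set of $\tau$-cycles. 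The argument then splits on~$p$.

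If $p\ge5$, then $p\mid|S|\le6$ forces $p=5$ and $|S|=5$, so $\tau$ is a single $5$-cycle and, after a relabelling, $|c|=2$; writing $S=\{c,c+\invol,\dots,c+4\invol\}$ and noting that $\sigma$ pairs $c+k\invol$ with $c+(5-k)\invol$, we get $a=\eta_1(c+\invol)$ and $b=\eta_2(c+2\invol)$ for some signs $\eta_i\in\{\pm1\}$ (interchanging $a$ and~$b$ if need be). Substituting into $a+b+c=0$ and reducing with $2c=0$ and $5\invol=0$ leaves $c=-(\eta_1+2\eta_2)\invol$, where $\eta_1+2\eta_2\in\{\pm1,\pm3\}$ is coprime to~$5$, so $|c|=5$, contradicting $|c|=2$. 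For $p=3$: if $|S|=3$, then $\tau$ is a $3$-cycle through $a$ and $a+\invol$, both of order~$2$, so $2\invol=0$, contradicting $|\invol|=3$; hence $|S|=6$, no generator has order~$2$, and $\tau$ is a product of two $3$-cycles $O_1,O_2$. Since $\sigma$ has no fixed point in~$S$ but every involution of a $3$-element set has one, $\sigma$ cannot fix $O_1$, so it swaps $O_1\leftrightarrow O_2$; thus $O_1$ meets each of the three pairs, say $O_1=\{\eta_1 a,\eta_2 b,\eta_3 c\}$. Using that a $3$-cycle $\{y,y+\invol,y+2\invol\}$ has element-sum $3y$ (for each of its elements~$y$), we get $3\eta_1 a=3\eta_2 b=3\eta_3 c$, and combining this with $3(a+b+c)=0$ and, when all $\eta_i$ agree, with the identity $\sum_{y\in O_1}y=\eta_1(a+b+c)=0$, a short computation gives $3a=3b=0$, i.e.\ $|a|=|b|=3$, which is case~\pref{val6triv-3}.

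If $p=2$, then $|S|=6$, so $\sigma$ and~$\tau$ are each a product of three transpositions; since $2\invol=0$, $\tau$ carries the $\sigma$-pair $\{x,-x\}$ to $\{x+\invol,-x+\invol\}$, and so induces an order-$\le2$ permutation of the three pairs $\{\pm a\},\{\pm b\},\{\pm c\}$, which fixes at least one, say $\{\pm a\}$. Then $a+\invol=-a$, so $\invol=2a$ and $|a|=4$; if $\tau$ also fixed the other two pairs we would get $\invol=2a=2b=2c$, hence $0=2(a+b+c)=3\invol=\invol$, which is absurd, so $\tau$ swaps $\{\pm b\}\leftrightarrow\{\pm c\}$ and $b+\invol\in\{\pm c\}$. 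The option $b+\invol=-c$ gives $0=a+b+c=-a$, impossible; so $c=b+\invol=b+2a$, and then $0=a+b+c=3a+2b$, which with $4a=0$ yields $2b=a$, whence $|b|=8$ and $a=2b$ — case~\pref{val6triv-8} after interchanging $a$ and~$b$.

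For the converse, I would in each case exhibit~$\invol$ and quote \cref{twiniff}: in~\pref{val6triv-8}, with $c=-3a$, the element $\invol=4a$ has order~$2$ and $S=\{\pm a,\pm2a,\pm3a\}$ is manifestly $\invol$-invariant; in~\pref{val6triv-3}, the distinctness of the three pairs forces $\langle a,b\rangle\cong\ZZ_3\times\ZZ_3$, and then $\invol=a-b$ has order~$3$ and $S+\invol=S$. The closing claim of the lemma follows at once: \pref{val6triv-3} is \fullref{val6}{9}, and \pref{val6triv-8} falls under \fullref{val6}{2a=2b}, since there $2a=2(-3a)$ while $a\ne-3a$ and $|G|$ is divisible by $|a|=8$. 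The only genuinely fiddly part is the sign bookkeeping in the $p=3$ and $p=5$ cases; the $\sigma/\tau$ parity facts (fixed-point-freeness, and the fact that a $3$-element set carries no fixed-point-free involution) are precisely what keep that bookkeeping to a handful of lines, so I do not anticipate a serious obstacle.
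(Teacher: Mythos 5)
Your proposal is correct and follows essentially the same route as the paper: both directions reduce to \cref{twiniff} and the forward implication is a case analysis on the prime $p=|\invol|\in\{2,3,5\}$ forced by $p\mid|S|\le 6$, with the $p=2$ computation essentially identical to the paper's and the $p=3$, $p=5$ cases dispatched by equivalent (if differently organized) sign bookkeeping. The only cosmetic divergence is that you place the $|a|=8$, $b=2a$ graph under \fullcref{val6}{2a=2b} while the paper places it under \fullcref{val6}{4} via the order-$4$ element $2a$; both identifications are valid.
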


\begin{proof}
($\Leftarrow$) Up to a group isomorphism, $X$ is either $\Cay(\ZZ_8; S_8)$ or $\Cay(\ZZ_3 \times \ZZ_3; S_3)$, where 
	\[ \text{$S_8 = \{\pm 1, \pm 2, \pm 3\}$
	\quad and \quad
	$S_3 = \{\pm(1,0), \pm(0,1), \pm(1,1)\}$.}
	\]
We have $S_8 = S_8 + 4$ and $S_3 = S_3 + (-1,1)$, so both Cayley graphs have twin vertices, and are therefore trivially unstable. 

Since $2$ is an element of order~$4$ in~$\ZZ_8$, the first Cayley graph is listed in~\fullref{val6}{4}. The second is listed in~\fullref{val6}{9}.

\medbreak

($\Rightarrow$) 
Let $S = \{\pm a, \pm b, \pm c\}$. Since $X$ has twin vertices, we see from \cref{twiniff} that $S$ is a union of cosets of some subgroup~$\langle \twin \rangle$ of prime order. Since $|S| \le 6$, we must have $|z| \in \{2,3,5\}$. 

\refstepcounter{caseholder}  

\begin{case}
Assume $|\twin| = 2$.
\end{case}
Then $-s + \twin = -(s + \twin)$ for all $s \in S$, so the permutation $x \mapsto x + \twin$ induces a well-defined action on $\bigl\{ \{\pm a\}, \{\pm b\}, \{\pm c\} \bigr\}$. Since the permutation has order~$2$, this implies $s + \twin = -s$ for some $s \in S$. Assume without loss of generality that $s = a$, so $|a| = 4$ and $\twin = 2a$. 

If $b + \twin = -b$, then we also have $c + \twin = -c$, so 
		\[ 0 = -(a + b + c) = (a + \twin) + (b + \twin) + (c + \twin) = (a + b + c) + \twin = 0 + \twin = \twin , \]
which contradicts the fact that $|\twin| = 2$.

Therefore, we must have $b + \twin = \pm c$, so $a + b \pm (b + \twin) = 0$.
	\begin{itemize}
	\item For the minus sign, we have $0 = a + b - (b + \twin) = a + \twin$. This contradicts the fact that $|a| = 4 \neq |\twin|$.
	\item For the plus sign, we have $0 = a + 2b + \twin = -a + 2b$, so $a = 2b$. Since $|a| = 4$, this implies $|b| = 8$, so $X$ is the Cayley graph in~\pref{val6triv-8}.
	\end{itemize}

\begin{case}
Assume $|\twin| = 3$.
\end{case}
This implies $|S| = 6$, so no element of~$S$ has order~$2$. Therefore, if $C$ is any coset of~$\langle \twin \rangle$, then $C \neq -C$ (since $|C|$ is odd). Since the cosets form a partition, we conclude that $C \cap -C = \emptyset$. Hence, we may assume that $C$ contains $a$, $b$, and~$\pm c$. Then 
	\[ 0 = a + (a + \twin) \pm (a + 2\twin) , \]
so either $3a = 0$ or $a = \twin$. However, $a \neq \twin$, since $0 \notin S$. Therefore $3a = 0$, which means $|a| = 3$. Since $\twin$ also has order~$3$, $X$ is the Cayley graph in~\pref{val6triv-3}.

\begin{case}
Assume $|\twin| = 5$.
\end{case}
Then $|S| = 5$, so some element~$s$ of~$S$ has order 2. Since $G = \langle S \rangle = \langle s, \twin \rangle$, this implies $|G| = 10$. More precisely, up to a group isomorphism, we have $X = \Cay(\ZZ_{10}; \pm 1, \pm 3, 5)$. However, it is not possible to choose representatives of $\{\pm 1\}$, $\{\pm 3\}$, and~$\{5\}$ whose sum is~$0$, so this case is not possible.
\end{proof}

\begin{lem} \label{val6unstable}
All of the Cayley graphs listed in the statement of \cref{val6} are unstable.
\end{lem}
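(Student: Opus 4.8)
Since the claim is \cref{val6unstable}, which asserts that all five families listed in \cref{val6} are unstable, the plan is to work through the five parts one at a time, producing an explicit automorphism of the canonical double cover in each case — either by invoking \cref{IsoS+z} with a suitable involution, or by invoking \cref{StableIffStabilizer}/\cref{LargelyFixing} when no such isomorphism is available, or by direct comparison of automorphism group orders.

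Here is how I would handle each part. For \pref{val6-9}, where $|a|=|b|=3$, there is nothing to do: by \cref{val6triv} this graph has twin vertices, hence is trivially unstable. For \pref{val6-2a=2b}, where $2a=2b$: set $\twin = b-a$, which has order dividing~$2$; since $2a = 2b$ but $\{\pm a\}\ne\{\pm b\}$, in fact $|\twin|=2$, and $\{\pm a\}+\twin = \{\pm b\}$ while $\{\pm c\}+\twin = \{\pm c\}$ (because $c = -(a+b)$ and $-(a+b)+\twin = -(a+b) + (b-a) = -2a = $ an element one checks lies in $\{\pm c\}$ or equals its negate via $2\twin=0$). Wait — one must be careful: the correct identity is that $S+\twin=S$ would mean twin vertices, which is \emph{not} what's claimed here (the hypothesis is $|G|$ divisible by~$8$). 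So instead I expect $S + \twin \ne S$ but $\Cay(G;S)\iso\Cay(G;S+\twin)$ via the automorphism negating the relevant generators; the divisibility-by-$8$ hypothesis is what makes $\twin$ have order~$2$ and lets one build the group automorphism sending $S$ to $S+\twin$, after which \cref{IsoS+z} gives instability. For \pref{val6-4} ($|a|=4$, $8\mid|G|$): let $\twin$ be the order-$2$ element of $\langle a\rangle$ (so $\twin = 2a$), and look for a group automorphism $\varphi$ with $\varphi(a) = -a = a+\twin$ and $\varphi$ fixing or suitably permuting $b,c$; one checks $b+\twin$ and $c+\twin$ are again in $S$ or that $\varphi$ can be chosen on the complement $\langle b\rangle$-part so that $\varphi(S) = S+\twin$, then apply \cref{IsoS+z}. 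This is exactly the style of argument already carried out in \cref{val4unstable}\pref{val4-int4} and should transfer. For \pref{val6-8} ($|a|=8$, $b=3a$, so $G=\langle a\rangle=\ZZ_8$, $c=-4a$): this is the circulant $\Cay(\ZZ_8;\pm1,\pm3,4)$, which is \cref{val5circulant}\pref{val5circulant-8} — wait, that's valency~$5$; here $c=4a$ has order~$2$ so the graph has valency~$5$, and it is literally $\Cay(\ZZ_8;\pm1,\pm3,4)$, whose instability is given by \cref{val5circulant}. For \pref{val6-cyclic12} ($|a|=12$, $b=4a$, $G=\ZZ_{12}$, $c=-5a$): this is the circulant $\Cay(\ZZ_{12};\pm1,\pm4,\pm5)$ of valency~$6$; I would match it against \cref{val6circulant} — with $n=12k$, $k=1$, $s=1$, it fits \pref{val6circulant-C1Se4} (take $a=1$ odd, $b=4$ even, $b+2k=6$, and $\pm5 = \pm(4+2)$... ) or one of the other families; alternatively just exhibit the involution $\twin=6$ and check $\Cay(\ZZ_{12};\pm1,\pm4,\pm5)\iso\Cay(\ZZ_{12};\pm1\pm6,\pm4\pm6,\pm5\pm6) = \Cay(\ZZ_{12};\pm5,\pm2,\pm1)$ via multiplication by~$5$ (a unit mod~$12$), since $5\cdot1=5$, $5\cdot4=20=8=-4$... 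I would compute the precise unit that works.

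The main obstacle I anticipate is \pref{val6-2a=2b} and \pref{val6-4}: in both, the group $G$ is only constrained by a divisibility condition and the structure of $\langle b\rangle$ relative to $\langle a\rangle$ can vary, so one must produce the witnessing group automorphism $\varphi$ of~$G$ in a way that is uniform over all such~$G$. The delicate point is checking that $\varphi$, defined by its action on generators $a,b$ (or $a$ and a complementary generator), genuinely extends to a well-defined automorphism of~$G$ — i.e. that it respects the defining relations — and that $\varphi(S)=S+\twin$ as sets, not just elementwise up to sign. I would organize this by choosing $n=|b|$, $m=|G:\langle b\rangle|$, and $r$ with $ma+rb=0$ exactly as in the proof of \cref{val4}, reducing the verification to a congruence check modulo $n$ and modulo~$m$; the divisibility of $|G|$ by~$8$ is precisely what forces the relevant parities to cooperate.

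For the cyclic cases \pref{val6-8} and \pref{val6-cyclic12}, the only real work is bookkeeping: identify the circulant explicitly and cite \cref{val5circulant} or \cref{val6circulant}. I would state these citations and the parameter match, spending no more than a line or two each. Throughout, whenever \cref{IsoS+z} is not directly applicable I would fall back on exhibiting the double-cover automorphism by hand — e.g. the map $(g,i)\mapsto(\varphi(g), 1-i)$ when $\varphi$ is an isomorphism $\Cay(G;S)\to\Cay(G;S+\twin)$, or $(g,i)\mapsto(g+i\twin, i)$ directly — and then checking $\alpha(0,0)=(0,0)$ but $\alpha(0,1)\ne(0,1)$ to invoke \cref{StableIffStabilizer}.
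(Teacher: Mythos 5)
Your treatment of parts \pref{val6-9} and \pref{val6-8} matches the paper (trivial instability via \cref{val6triv}, and \fullcref{val5circulant}{8} respectively), and your sketch for part \pref{val6-4} is in the right neighbourhood: the paper takes $\invol=2a$ and the automorphism $\varphi(pa+qb)=pa+q(b+\invol)$, whose well-definedness is exactly the point you flag (it holds because $8\mid|G|$ forces $|\langle b\rangle:\langle a\rangle\cap\langle b\rangle|$ to be even, so $q$ is determined mod~$2$); note that your alternative of ``$\varphi$ fixing $b$'' would not give $\varphi(S)=S+\invol$, since then $\pm b\in\varphi(S)$ but $\pm b\notin S+\invol$ in general.

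The genuine gap is part \pref{val6-2a=2b}. With $\invol=b-a$ one gets $S+\invol=\{\pm a,\pm b,\pm 2a\}$, and $\Cay(G;S)$ is in general \emph{not} isomorphic to $\Cay(G;S+\invol)$: in $\Cay(G;S)$ every generator edge lies on two triangles (coming from $a+b+c=0$), whereas in $\Cay(G;S+\invol)$ the edge $\{0,b\}$ generically lies on only one (from $b-2a=-b$). So the \cref{IsoS+z} route you propose fails here, and no choice of ``negating the relevant generators'' rescues it. The paper instead uses the graph automorphism of $X$ itself that swaps $a$ and~$b$ and fixes the index-$2$ subgroup $\langle 2a,c\rangle$ pointwise: when $\langle a\rangle\neq G$ the un-fixed vertices induce two cycles of even length $|c|=|G|/4$ that are interchanged, so Wilson's criterion (\cref{LargelyFixing}) applies; when $\langle a\rangle=G$ one is reduced to the circulant $\Cay(\ZZ_{8k};\pm1,4k\pm1,4k\pm2)$ and must cite \fullcref{val6circulant}{C3order2}. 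You would need both this different mechanism and this case split. A smaller issue: in part \pref{val6-cyclic12} the graph $\Cay(\ZZ_{12};\pm1,\pm4,\pm5)=\Cay(\ZZ_{12};\pm1,\pm4,\pm7)$ does not fit \fullref{val6circulant}{C1Se4} (that family would give $\{\pm4,\pm2\}$, not $\{\pm4,\pm5\}$); the correct match is \fullref{val6circulant}{C2} with $k=3$, $a=4$, and your alternative unit-multiplication computation is left unfinished.
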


\begin{proof}
\pref{val6-4} (Wilson \cite[Thms.\ T.2 and~T.3, p.~381]{Wilson})
Let $\invol = 2a$, and define $\varphi \colon G \to G$ by $\varphi(pa + qb) = pa + q(b + \invol)$. Since $|a| = 4$, we have $|\invol| = 2$, so $\varphi$ is well-defined. (Since $|G|$ is divisible by~$8$, but $|a|$ is not, and $\langle a, b \rangle = G$, we know that $|\langle b \rangle : \langle a \rangle  \cap \langle b \rangle |$ is even. Therefore, if $p_1 a + q_1 b = p_2 a + q_2 b$, then $q_1 \equiv q_2 \pmod{2}$, so $p_1 a + q_1 (b + \invol) = p_2 a + q_2 (b + \invol)$.) Then it is easy to see that $\varphi$ is an automorphism of~$G$. Also, we have 
	\[ \varphi(a) = a = -3a = -a + 2a = -a + \invol , \]
so $\varphi(S) = S + \invol$. Hence, $\varphi$ is an isomorphism from $\Cay(G; S)$ to $\Cay(G; S + \invol)$. This implies $\Cay(G; S)$ is unstable \csee{IsoS+z}.

\pref{val6-2a=2b} (Wilson \cite[Thm.~T.1, p.~381]{Wilson})
There is an automorphism~$\varphi$ of~$G$ that interchanges $a$ and~$b$ (and fixes~$c$). Then $\varphi(S) = S$, so $\varphi$ is an automorphism of $\Cay(G; S)$. Also note that $\varphi$ fixes each element of the index-$2$ subgroup $\langle 2a, c \rangle$.
	\begin{itemize}
	\item If $\langle a \rangle \neq G$, then the subgraph induced by the set of un-fixed vertices consists of two cycles of length~$|c|$, and these two cycles are interchanged by~$\varphi$. Since $|c| = |a|/2 = |G|/4$ is even, these cycles are bipartite. Therefore $\Cay(G; S)$ is unstable by \cref{LargelyFixing}.	
	\item If $\langle a \rangle = G$, then (since $|G|$ is divisible by~$8$) we may assume $G = \ZZ_{8k}$ and $a = 1$. Since $2a = 2b$, we know that $b - a$ has order~$2$, and is therefore equal to $4k$. Hence $S = \{\pm 1, 4k \pm 1, 4k \pm 2 \}$, so we see that $X$ is unstable by letting $a = 4k + 2$ and $b = 1$ in \fullcref{val6circulant}{C3order2}.
	\end{itemize}

\pref{val6-8} We have $X \iso \Cay(\ZZ_8; \pm 1, \pm 3, 4)$, which is unstable by \fullcref{val5circulant}{8}.

\pref{val6-cyclic12} Since $X \iso \Cay(\ZZ_{12}; \pm 1, \pm 4, \pm 5) = \Cay(\ZZ_{12}; \pm 1, \pm 4, \pm 7)$, we see that it is unstable from \fullcref{val6}{cyclic12} with $k = 3$ and $a = 4$.

\pref{val6-9} $X$ is trivially unstable by \fullcref{val6triv}{3}.
\end{proof}

\begin{lem} \label{val6abcCyclic}
Let $S = \{\pm 1, \pm b, \pm (b + 1) \}$, for some $b \in \ZZ_n$, such that $b \notin \{0, \pm 1, -2\}$ \textup(so the sets $\{\pm 1\}$, $\{\pm b\}$, $\{\pm (b + 1) \}$ are distinct\textup).
If\/ $\Cay(\ZZ_n; S)$ is unstable, then it is listed in \cref{val6}.
\end{lem}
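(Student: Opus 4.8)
The plan is to derive this \lcnamecref{val6abcCyclic} from the classification of unstable circulant graphs of valency~$6$ in \cref{val6circulant} (together with \cref{val6triv} for the trivially unstable case), which plays the same role here that \cref{val4circulant} plays in \cref{val4}. Two features of the connection set $S=\{\pm1,\pm b,\pm(b+1)\}$ will do most of the work: it contains the generator~$1$ of $\ZZ_n$, and the sum-zero triple that one must feed into \cref{val6} is $\{1,\,b,\,-(b+1)\}$.

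\emph{Reductions.} If $n$ were odd, then $\Cay(\ZZ_n;S)$ would be nonbipartite and would have no twin vertices -- by \cref{val6triv}, since a cyclic group of odd order has no element of order~$8$, and the only cyclic group with two generators of order~$3$ is~$\ZZ_3$, in which every element lies in the forbidden set $\{0,\pm1,-2\}$ -- so \cref{odd} would force it to be stable, contrary to hypothesis. Hence $n$ is even. Then exactly one of $b,\,b+1$ has even residue, so $S$ consists of four elements of odd residue together with~$\pm$ of a single even element. If $X$ is trivially unstable, then it has twin vertices, and \cref{val6triv} (with the same observations) gives $X\iso\Cay(\ZZ_8;\pm1,\pm2,\pm3)$, which is the case $n=8$, $b=2$, listed in \fullcref{val6}{4}.

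\emph{Case analysis.} Assume $X$ is nontrivially unstable. By \cref{val6circulant}, after relabelling its three $\pm$-pairs the set~$S$ is one of the seven displayed forms, and the parity count rules out two of them immediately: \fullcref{val6circulant}{C1Se4}, whose connection set has four even elements, and \fullcref{val6circulant}{C3order4}, in which ``$1\in S$'' forces $k=1$, leaving a generator of order~$2$, impossible at valency~$6$. For each of the remaining five forms, ``$1$ generates $\ZZ_n$'' determines which displayed parameter is~$\pm1$; then comparing residues and computing the orders of~$b$ and~$b+1$ (and the differences $2x-2y$ for $x,y\in S$) yields:
\begin{itemize}
\item \fullcref{val6circulant}{C2} matches~$S$ only when $n=12$, giving $\Cay(\ZZ_{12};\pm1,\pm4,\pm7)$, which is listed in \fullcref{val6}{cyclic12};
\item each of \fullcref{val6circulant}{C1Se2}, \fullcref{val6circulant}{C4}, and \fullcref{val6circulant}{C4more} forces either an element of $\{b,\,-(b+1)\}$ of order~$4$ or a relation $2\cdot1=2w$ with $w\in\{b,\,-(b+1)\}$, together with $8\mid n$, so $X$ is listed in \fullcref{val6}{4} or \fullcref{val6}{2a=2b};
\item \fullcref{val6circulant}{C3order2} yields $2\cdot1=2w$ for some $w\in\{b,\,-(b+1)\}$ with $8\mid n$, so $X$ is listed in \fullcref{val6}{2a=2b} (the borderline case $n=8$ also falling under \fullcref{val6}{4}).
\end{itemize}
In particular \fullcref{val6}{8}, whose Cayley graph has valency~$5$, and \fullcref{val6}{9}, which forces $G=\ZZ_3$, are never reached; so in every case $X$ is listed in \cref{val6}.

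\emph{The main obstacle.} The real work is the case-by-case matching against the parametrized families \fullcref{val6circulant}{C4} and \fullcref{val6circulant}{C4more}, which are described through an auxiliary unit~$m$ with $m^2\equiv\pm1$ and linear congruences such as $(m-1)a\equiv2k$ and $(m-1)a\equiv(m+1)b\equiv4k$: one must, modulo the $\pm$ and permutation symmetries, decide which of $1,\,b,\,b+1$ plays each displayed role, discard the sub-cases in which the connection set collapses to fewer than six distinct elements, and then verify the orders of the survivors -- elementary but fiddly modular arithmetic. One could lighten this by first treating ``generic''~$b$ directly: when $S$ admits no coincidence $s+t=u+v\neq0$ with $\{s,t\}\neq\{u,v\}$ (which fails only for finitely many~$b$, namely those making $1-b\in S$ and the like), \cref{StableIffStabilizer}, \cref{4cyclenormal}, and \cref{IsoS+z} show that instability is equivalent to the existence of a unit~$u$ with $uS=S+(n/2)$, a condition easy to analyse by hand, leaving only the finitely many exceptional~$b$ for \cref{val6circulant}.
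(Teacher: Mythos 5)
Your overall strategy --- reduce to the known classification of unstable circulants and match $S$ against each listed family --- is the same as the paper's, and your treatment of the trivially unstable case and of the seven valency-$6$ families in \cref{val6circulant} tracks the paper's argument closely. However, there is a genuine gap: your case analysis silently assumes that $X$ has valency~$6$, so that \cref{val6circulant} applies. The hypothesis of the \lcnamecref{val6abcCyclic} does not guarantee this. When $n$ is even, $\ZZ_n$ has an element of order~$2$, and if $b = n/2$ or $b+1 = n/2$ then $S = \{\pm 1, \pm b, \pm(b+1)\}$ has only five elements; in that situation \cref{val6circulant} says nothing about~$X$, and the relevant classification is \cref{val5circulant}. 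This case is not vacuous: taking $n = 8$ and $b = 3$ gives $\Cay(\ZZ_8; \pm 1, \pm 3, 4)$, which is connected, nonbipartite, twin-free, and unstable by \fullcref{val5circulant}{8} --- and it is precisely the graph of \fullcref{val6}{8}. Your parenthetical assertion that \fullcref{val6}{8} ``is never reached'' is therefore false, and as written your proof does not account for this unstable member of the family.

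The repair is exactly the paper's first step: since a cyclic group has at most one element of order~$2$, the valency of~$X$ is either $5$ or~$6$, and the valency-$5$ case must be run through \cref{val5circulant} separately. There, family \pref{val5circulant-12k} is excluded by the same parity count you already use for \fullcref{val6circulant}{C1Se4} (one odd and two even generators cannot be given signs so as to sum to~$0$ in a group of even order), leaving only \fullcref{val5circulant}{8}, which lands in \fullcref{val6}{8}. With that branch added, your argument matches the paper's; the remaining work in the \fullcref{val6circulant}{C4} and \pref{val6circulant-C4more} families, which you correctly flag as the fiddly part, is carried out in the paper by the modular computations you outline.
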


\begin{proof}
Let $X = \Cay(\ZZ_n; S)$.
We may assume $X$ is nontrivially unstable, for otherwise \cref{val6triv} applies.
Also, since cyclic groups have no more than one element of order~$2$, we know that the valency of~$X$ is either $5$ or~$6$. 
Therefore, the Cayley graph $X$ is listed in either \cref{val5circulant} or \cref{val6circulant}.

\refstepcounter{caseholder} 

\begin{case}
Assume $X$ is listed in \cref{val5circulant}.
\end{case}
In \fullref{val5circulant}{12k}, one generator is odd, but the other two generators are even, so $a + b + c \not\equiv 0 \pmod{2}$. Hence, the equation $a + b + c = 0$ is not satisfied.

So $X$ must the graph in \fullref{val5circulant}{8}, which is listed in~\fullref{val6}{8}.

\begin{case} \label{val6abcCyclicpf-val6}
Assume $X$ is listed in \cref{val6circulant}.
\end{case}
We consider each of the seven lists of graphs individually.
	
	\smallbreak \textbf{\fullref{val6circulant}{C1Se2}}: The element $2k$ has order~$4$, so $X$ is listed in~\fullref{val6}{4}.
	
	\smallbreak \textbf{\fullref{val6circulant}{C1Se4}}: The generator $\pm a$ is odd, but $\pm b$ and $\pm b + 2k$ are even, so the equation $a + b + c = 0$ is not satisfied.
	
	\smallbreak \textbf{\fullref{val6circulant}{C2}}: For some choice of the signs, we must have $\pm a \pm (a + k) \pm (a - k) = 0$, so $3a \equiv 0 \pmod{k}$. Since $k$ is odd and we also have $a \equiv 0 \pmod{4}$, this implies $3a = 0$ in~$\ZZ_{4k}$. But since $1$ is in the generating set, we must also have $a - k = \pm 1$ (perhaps after replacing $a$ with $-a$). So $a \in \{k \pm 1\}$. Therefore $3k \pm 3 = 3a \equiv 0 \pmod{4k}$. From this (and the fact that $a \neq 0$), we conclude that $k = 3$. Since $k \pm 1 = a \equiv 0 \pmod{4}$, we see that $a = 4$, so $X$ is listed in~\fullref{val6}{cyclic12}.
	
	\smallbreak \textbf{\fullref{val6circulant}{C3order2}}: For some $c \in \{4k \pm b\}$, we have $\pm a + b + c = 0$. However, if $c = 4k - b$, then this implies $a = 4k$, which contradicts the fact that $|a|$ is divisible by~$4$. Therefore, we must have $c = 4k + b$. Then $2b = 2c$, so this Cayley graph is listed in~\fullref{val6}{2a=2b}.
	
	\smallbreak \textbf{\fullref{val6circulant}{C3order4}}: The condition $a + b + c = 0$ implies $a \equiv 0 \pmod{k}$, so all of the elements of~$S$ are divisible by~$k$. Since $1 \in S$, we conclude that $k = 1$, so $n = 8$. Since $a \equiv 0 \pmod{4}$, this implies $a = 4$. So $X$ is the Cayley graph in \fullref{val6}{8}. (Alternatively, we have $|a| = 2$, which contradicts the fact that the valency of~$X$ is assumed to be~$6$ in the current \lcnamecref{val6abcCyclicpf-val6}.)
	
	\smallbreak \textbf{\fullref{val6circulant}{C4}}: Since $m$ is odd, we know that $b$ and $mb + 2k$ have the same parity. Hence, the equation $a + b + c = 0$ implies that $a$ is even, so $a \neq \pm 1$. Therefore, we may assume $1 \in \{b , mb + 2k\}$. Since the assumptions imply $m^2 b \equiv \pm b$ (and $m$ is odd), we have $\pm b = m(mb + 2k) + 2k$, so there is no harm in assuming $b = 1$.

Since $m^2 \equiv 1 \pmod{4}$, we know that $m^2 + 1 \not\equiv 0 \pmod{4}$, so $(m^2 + 1)b = m^2 + 1 \not\equiv 0 \pmod{4k}$. 
Therefore, we must have $m^2 \equiv 1 \pmod{4k}$. 

For an appropriate choice of the sign (and perhaps replacing $a$ with its negative), we have $-a \pm b + (mb + 2k) = 0$, so
	\[ a = mb + 2k \pm b = m + 2k \pm 1. \]
However,
		\[ (m-1)(m + 2k + 1)
			= m^2 - 1 + (m-1)(2k)
			\equiv 1 - 1 + 0
			= 0
			\not\equiv 2k
			\equiv (m-1) a
			\pmod{4k}
			. \]
So $a \neq m + 2k + 1$. Hence, we must have
	\[ a = m + 2k - 1 . \]
Then, modulo~$4k$, we have
		\[ 2k \equiv (m-1) a
			= (m-1)(m + 2k - 1)
			= m^2 - 2m + 1+ (m-1)(2k)
			\equiv 1 - 2m + 1 + 0
			= -2(m - 1)
			. \]
This means $m \equiv k + 1 \pmod{2k}$, so $m$ is either $k + 1$ or $-k + 1$. 

Since $m$ is odd, this implies that $k$ is even, so $|\ZZ_{4k}|$ is divisible by~$8$. It also implies that $a$ is either $3k$ or~$k$. In either case, $a$ has order~$4$. So $X$ is listed in~\fullref{val6}{4}.

	\smallbreak \textbf{\fullref{val6circulant}{C4more}}: If $a = 1$, then $m = 4k + 1$, so $b$ and~$c$ must be even, which contradicts the equation $a + b + c = 0$. 
	
	Thus, we may assume $b = 1$. Then $m = 4k - 1$, so $a \equiv 2k \pmod{4k}$, which means $a \in \{\pm 2k\}$. Therefore $a$ has order~$4$, so the Cayley graph is listed in~\fullref{val6}{4}. 
\end{proof}

As final preparation for proof of \cref{val6}, let us recall some useful notation.

\begin{notation}[cf.\ {\cite[Defn.~2.3]{HMMaut}, \cite[Notn.~2.5]{KutnarEtAl}}]
Assume $X = \Cay(G; S)$ is an abelian Cayley graph.
	\noprelistbreak
	\begin{enumerate}
	\item For $g \in G$, let $\bx g = (g,1)$, so $BX = \Cay(G \times \ZZ_2; \bx S \,)$.
	\item For $s_1,\ldots, s_\ell \in S$, and a starting point $v \in V(BX)$, we use $(\bx{s_1},\bx{s_2},\ldots,\bx{s_\ell})$ to denote 
		\[ \text{the walk \ $v, \, v + \bx{s_1}, \, v + \bx{s_1} + \bx{s_2}, \, \ldots, \, v + \bx{s_1} + \bx{s_2} + \cdots + \bx{s_\ell}$ \ in $BX$}. \]
	\item $(\bx{s_1}^{m_1},\bx{s_2}^{m_2},\ldots,\bx{s_\ell}^{m_\ell})$ denotes the sequence consisting of 
	$m_1$ copies of $\bx{s_1}$, 
	followed by $m_2$ copies of $\bx{s_2}$, 
	followed by~\ldots,
	followed by $m_\ell$ copies of $\bx{s_\ell}$.
	\end{enumerate}
\end{notation}

\begin{proof}[\bf Proof of \cref{val6}]
($\Leftarrow$) See \cref{val6unstable}.

\medbreak

($\Rightarrow$)
Let $S = \{\pm a, \pm b, \pm c\}$, so $X = \Cay(G; S)$.
Assuming that $X$ is unstable, we will show that (at least) one of the listed conditions is satisfied.
Note that, since the equation $a + b + c  = 0$ is completely symmetric, we are free to permute the elements of $\{a,b,c\}$ arbitrarily.

By \cref{StableIffStabilizer}, we may let $\alpha$ be an automorphism of~$BX$ that fixes $(0,0)$, but does not fix $(0,1)$. Since $G \times \{0\}$ and $G \times \{1\}$ are the bipartition sets of $BX$, we know that each of these sets is $\alpha$-invariant. 

\refstepcounter{caseholder} 

\begin{case} \label{val6pf-odd}
Assume $|G|$ is odd.
\end{case}
By \cref{odd}, we know that $X$ is trivially unstable. 
Therefore, \cref{val6triv} applies.

\begin{case} \label{val6pf-cyclic}
Assume $G = \langle s \rangle$ for some $s \in S$.
\end{case}
See \cref{val6abcCyclic}.

\begin{case} \label{val6pf-order2}
Assume $|c| = 2$.
\end{case}
We may assume \cref{val6pf-cyclic} does not apply; therefore $c \notin \langle a \rangle$. If we let $n = |a|$, then
	\[ X \iso \Cay \bigl( \ZZ_n \times \ZZ_2; \pm(1,0), \pm(1,1), (0,1) \bigr) . \]
	\begin{itemize}
	\item If $n \ge 8$, then \cref{triangles} tells us that $X$ is stable.
	\item If $n \le 7$, and $n \neq 4$, then \cref{SmallZnxZ2} tells us that $X$ is stable.
	\item If $n = 4$, then $|a| = 4$ and $|G| = 8$, so condition~\pref{val6-4} is satisfied.
	\end{itemize}

\begin{pfassump}
Henceforth, we assume $|s| \ge 3$ for all $s \in S$ \textup(so $X$ has valency~$6$\textup), and $|G|$ is even.
\end{pfassump}

\begin{case} \label{val6pf-order4}
Assume $|s| = 4$, for some $s \in S$.
\end{case}
We may assume $|G| = 4k$, where $k$ is odd, for otherwise condition~\pref{val6-4} is satisfied. Then $G$ is cyclic (since $|a| = 4$ and $G/\langle a \rangle$ is generated by~$b$). We may also assume that $X$ is nontrivially unstable, for otherwise \cref{val6triv} applies. Then $X$ must be listed in \cref{val6circulant}. More precisely, since $|G|$ is not divisible by~$8$, it must be listed in \fullref{val6circulant}{C1Se4}, \fullref{val6circulant}{C2}, or~\fullref{val6circulant}{C4}. We will look at each of these possibilities separately (similarly to how these cases were considered in the proof of \cref{val6abcCyclic}).

\smallbreak \textbf{\fullref{val6circulant}{C1Se4}}:
Since the generator $\pm a$ is odd, but the other two are even, the equation $a + b + c = 0$ cannot be satisfied.

\smallbreak \textbf{\fullref{val6circulant}{C2}}:
Since $S$ must contain the element~$k$ of order~$4$, but $a \equiv 0 \pmod 4$, we must have 
	\[ \text{$a + k = k$ \ or \ $a + k = -k$ \ or \ $a - k = k$ \ or \ $a - k = -k$,} \]
so $a \in \{0, 2k \}$. However, we know $a \neq 0$. And $a \neq 2k$, because $2k \not\equiv 0 \pmod{4}$ (or because we are assuming that no element of~$S$ has order~$2$).

\smallbreak \textbf{\fullref{val6circulant}{C4}}:
Since $m$ is odd, we know that $b$ and $mb + 2k$ have the same parity. Therefore, the equation $a + b + c = 0$ implies that $a$ is even. So $a \neq \pm k$. Then $\{\pm b, \pm mb + 2k\}$ contains~$k$, and is therefore contained in~$\langle k \rangle$. This is impossible, because $\langle k \rangle$ does not contain 4 distinct nonzero elements.

\begin{case} \label{val6pf-aut}
Assume $\alpha$ is a group automorphism.
\end{case}
Then 
	\begin{itemize}
	\item $\alpha(g,0) = \bigl( \varphi(g), 0 \bigr)$, for some automorphism~$\varphi$ of~$G$,
	and
	\item $\alpha(0,1) = (\invol, 1)$, for some element~$\invol$ of order~$2$.
	\end{itemize}
Then, since $\alpha \bigl(S \times \{1\} \bigr) = S \times \{1\}$, we have $\varphi(S) = S + \invol$, so $\varphi$ is an isomorphism from $\Cay(G; S)$ to $\Cay(G; S + \invol)$.

We have $\varphi(s) \in \pm\{a,b,c\} + \invol$ for all $s \in \{a,b,c\}$. At least two elements of~$S$ must use the same sign, which implies there exists $\epsilon \in \{0,1\}$, such that $\varphi(S)$ contains at least two elements of $\{\epsilon a + \invol, \epsilon b + \invol, \epsilon c + \invol \}$. We may assume $\epsilon = 1$ (by composing with the automorphism $x \mapsto -x$ if necessary). Then, by permuting $\{a,b,c\}$, we may assume that $a + \invol$ and $b + \invol$ are in $\varphi(\{a,b,c\})$. Therefore
	\[ 0 = \varphi(a + b + c) 
		=  \varphi(a) +  \varphi(b) + \varphi(c) 
		= (a + \invol) + (b + \invol) \pm (c + \invol)
		= (a + b) \pm c + \invol
		= -c \pm c + \invol
		. \]
Since $\invol \neq 0$, we conclude that $2c = \invol$, so $c$ has order~$4$. Therefore, \cref{val6pf-order4} applies.

\begin{case} \label{val6pf-neq}
Assume $2s \neq 2t$, for all $s,t \in S$, such that $s \neq t$.
\end{case}
We may assume $\alpha$ is not a group automorphism, for otherwise \cref{val6pf-aut} applies. 
Therefore, \cref{4cyclenormal} implies there exist $s,t,u,v\in S$ such that  $s + t = u + v \neq 0$ and $\{s,t\} \neq \{u,v\}$. From the assumption of this \lcnamecref{val6pf-neq}, we see that this implies either $3a = \pm c$ or $2a = \pm b \pm c$ (perhaps after permuting $a$, $b$, and~$c$). 
	\begin{itemize}
	\item If $3a = \pm c$, then $c = \pm 3a \in \langle a \rangle$, so $G = \langle a \rangle$. Therefore \cref{val6pf-cyclic} applies.
	\item If $2a = \pm b - c = \pm b + (a + b)$, then $a = \pm b + b \in \langle b \rangle$, so $G = \langle b \rangle$. Therefore \cref{val6pf-cyclic} applies.
	\end{itemize}
So we may assume 
	\[ 2a = \pm b + c = \pm b - (a + b) \in \{-a, -a - 2b\}, \]
so $3a \in \{0, -2b\}$.
	\begin{itemize}
	\item If $3a = -2b$, then $a = -2a - 2b = -2(a + b) = 2c \in \langle c \rangle$, so $G = \langle c \rangle$. Therefore \cref{val6pf-cyclic} applies.
	\end{itemize}
Hence, we may assume 
	\[ \text{$3a = 0$, so $|a| = 3$}. \]
We may also assume $a \notin \langle b \rangle$, for otherwise \cref{val6pf-cyclic} applies. So $\langle a \rangle \cap \langle b \rangle = \{0\}$. This implies $G = \langle a \rangle \times \langle b \rangle$, so, letting $n = |b|$, we have
	\[ X \iso \Cay \bigl( \ZZ_3 \times \ZZ_n; \pm(1,0), \pm(0,1), \pm (1,1) \bigr) . \]

Note that:
	\begin{itemize}
	\item $(\bx a^3)$ and $(\bx a^{-3})$ are paths of length~$3$ in $BX$ from $(0,0)$ to $\bx 0 = (0,1)$,
	\item$(\bx b^3)$ and $(\bx c^{-3})$ are paths of length~$3$ in $BX$ from $(0,0)$ to $3\bx b$,
	and
	\item $(\bx b^{-3})$ and $(\bx c^3)$ are paths of length~$3$ in $BX$ from $(0,0)$ to $3\bx c$. 
	\end{itemize}
Also note that if $s_1$, $s_2$, and~$s_3$ are not all equal to each other, then the number of permutations of the list $(\bx{s_1}, \bx{s_2}, \bx{s_3})$ is divisible by~$3$. Therefore, $\bx 0$, $3 \bx b$, and $3 \bx c$ are the only vertices of~$BX$ for which the number of paths of length~$3$ from $(0,0)$ to the vertex is not divisible by~$3$.  Hence, the set $\{\bx 0, 3 \bx b, 3 \bx c\}$ is $\alpha$-invariant.

We may assume $|b| > 12$, for otherwise \cref{Z3xZn} applies. Then the number of paths of length~$6$ from $3 \bx c = -3 \bx b$ to $3 \bx b$ is $\binom{6}{3} + 2$: 
	\begin{itemize}
	\item $\binom{6}{3}$ permutations of the path $(\bx b^3, \bx c^{-3})$, 
	and
	\item the paths $(\bx b^6)$ and $(\bx c^{-6})$.
	\end{itemize}
This is much smaller than the number of paths of length~$6$ from $\bx 0$ to either $3\bx b$ or~$3\bx c$. For example, paths from $\bx 0$ to $3\bx b$ include:
	\begin{itemize}
	\item $\binom{6}{3}$ permutations of the path $(\bx a^3, \bx b^3)$,
	\item $\binom{6}{3}$ permutations of the path $(\bx a^3, \bx c^{-3})$,
	and
	\item others.
	\end{itemize}
Thus, the vertex $(0,1) = \bx 0$ is uniquely determined as an element of the $\alpha$-invariant set $\{\bx 0, 3 \bx b, 3 \bx c\}$, so it must be fixed by~$\alpha$. This contradicts the choice of~$\alpha$.

\begin{case} \label{val6pf-remain}
The remaining case.
\end{case}
Since \cref{val6pf-neq} does not apply, we have $2s = 2t$, for some $s,t \in S$, such that $s \neq t$. 
Since \cref{val6pf-order4} does not apply, we know $s \neq -t$. Therefore, we may assume $s = a$ and $t \in \{\pm b\}$, so $2a \in \{\pm 2b\}$. 
However, if $2a = -2b$, then 
	\[ 2c = -2(a + b) = -(2a + 2b) = -(-2b + 2b) = -0 = 0, \]
so \cref{val6pf-order2} applies.
Thus, we must have $2a = 2b$. So $b = a + \invol$ for some element~$\invol$ of order~$2$.

We may assume $G \neq \langle a \rangle$ and $G \neq \langle b \rangle$ (otherwise, \cref{val6pf-cyclic} applies), so $b \notin \langle a \rangle$ and $a \notin \langle b \rangle$. Hence $\invol \notin \langle a \rangle$ and $\invol \notin \langle b \rangle$. So
	\[ X \iso \Cay \bigl( \ZZ_n \times \ZZ_2; \pm(1,0), \pm(1,1), \pm(2,1) \bigr) , \]
where $n = |a| = |b|$ is even and, up to a group isomorphism, $a = (1,0)$, $b = (1,1)$, and $c = -(2,1)$.

We may assume $X$ does not satisfy condition~\pref{val6-2a=2b}, so
	\[ \text{$n/2$ is odd} . \]
Also, since \cref{val6pf-order4} (and \cref{val6pf-order2}) does not apply, we have $n > 4$. In addition,
since \cref{val6pf-neq} does not apply, we know $2a \neq -2c$, so $n \neq 6$. Therefore (since $8/2$ is not odd), we have
	\[ n > 8 . \]

Note that, for every $v \in V(BX)$, the fact that $2a = 2b$ implies that $v + 2 \, \bx c$ and $v -2 \, \bx c$ are the only vertices of~$BX$ that are joined to $v$ by a \emph{unique} path of length~$2$. It follows from this that 
	\begin{align} \label{val6pf-cedges}
	 \text{$\alpha(v + k \bx c) = \alpha(v) \pm k \bx c$, \ for all $k \in \ZZ$ and all $v \in V(BX)$.}  
	 \end{align}
Since $\alpha$ fixes~$0$, we conclude that $\{\pm k \, \bx c\}$ is $\alpha$-invariant, for all $k \in \ZZ$.

Let $Y$ be the spanning subgraph of~$BX$ that is obtained by removing all edges of the form $(v, v + \bx c)$. By~\pref{val6pf-cedges}, we know that $Y$ is $\alpha$-invariant, so $\alpha$ is an automorphism of~$Y$.

Now, $\bx{(0,0)}$ and $\bx{(0,1)}$ are the only vertices that are at distance~$2$ from both $\bx c$ and $-\bx c$ in~$Y$. Since $\{ \pm \bx c\}$ is $\alpha$-invariant, this implies that $\bigl\{ \bx{(0,0)}, \bx{(0,1)} \bigr\}$ is also $\alpha$-invariant. 

Since $|\bx c| = n$, we have $(n/2) \bx c = - (n/2) \bx c$. Therefore, we see from~\pref{val6pf-cedges} that $(n/2) \bx c$ is fixed by~$\alpha$. 
However, since $n/2$ is odd, we have
	\[ \bx{(0,1)} = \frac{n}{2} \bx{(2,1)} = \frac{n}{2} \, \bx c . \]
Hence, $\bx{(0,1)}$ is fixed by~$\alpha$. Therefore, since $\bx{(0,0)}$ is the only other element of the invariant set $\bigl\{ \bx{(0,0)}, \bx{(0,1)} \bigr\}$, we conclude that $\bx{(0,0)}$ is also fixed by~$\alpha$. This contradicts the choice of~$\alpha$.
\end{proof}


\begin{thebibliography}{[00]} \itemsep=\medskipamount  

\raggedright \larger 

\bibitem{BaikFengSimXu4}
Y.-G.\,Baik, Y.-Q.\,Feng, H.-S.\,Sim, and M.-Y.\,Xu:
On the normality of Cayley graphs of abelian groups. 
\emph{Algebra Colloq.} 5 (1998) 297--304.
 \MR{1679566}
 
\bibitem{FernandezHujdurovic}
B.\,Fernandez and A.\,Hujdurović:
Canonical double covers of circulants
\emph{J. Combin. Theory Ser.~B} 154 (2022) 49--59. 
\MR{4357354},
\doi{10.1016/j.jctb.2021.12.005}

\bibitem{GodsilRoyle}
C.\,Godsil and G.\,Royle:
\emph{Algebraic Graph Theory}. 
Springer, New York, 2001.
 \MR{1829620},
 \doi{10.1007/978-1-4613-0163-9}

\bibitem{ProductHandbook}
R.\,Hammack, W.\,Imrich, and S.\,Klavžar:
\emph{Handbook of Product Graphs, 2nd ed.}
CRC Press, Boca Raton, FL, 2011.
\MR{2817074},
\url{https://www.routledge.com/9781138199088}

\bibitem{HujdurovicMitrovic}
A.\,Hujdurović, and Đ.\,Mitrović:
Some conditions implying stability of graphs
(preprint, 2022).
\url{https://arxiv.org/abs/2210.15249}

\bibitem{HMMaut}
A.\,Hujdurović, Đ.\,Mitrović, and D.\,W.\,Morris:
On automorphisms of the double cover of a circulant graph.
\emph{Electron. J. Combin.} 28 (2021), \#4.43, 25 pages. 
\MR{4395925},
\doi{10.37236/10655}

\bibitem{HMMlow}
A.\,Hujdurović, Đ.\,Mitrović, and D.\,W.\,Morris:
Automorphisms of the double cover of a circulant graph of valency at most 7
(preprint, 2021).
\url{http://arxiv.org/abs/2108.05164}

\bibitem{KotlovLovasz-twinfree}
A.\,Kotlov and L.\,Lovász:
The rank and size of graphs.
\emph{J. Graph Theory} 23 (1996), no.~2, 185--189. 
\MR{1408346},
\doi{10.1002/(SICI)1097-0118(199610)23:2<185::AID-JGT9>3.0.CO;2-P}

\bibitem{KutnarEtAl}
K.\,Kutnar, D.\,Marušič, D.\,W.\,Morris, J.\,Morris, and P.\,Šparl:
Hamiltonian cycles in Cayley graphs whose order has few prime factors. 
\emph{Ars Math. Contemp.} 5 (2012) 27--71.
 \MR{2853700},
 \doi{10.26493/1855-3974.177.341}

\bibitem{MarusicScapellatoSalvi}
D.\,Maru\v{s}i\v{c}, R.\,Scapellato, and N.\,Zagaglia Salvi:
A characterization of particular symmetric (0,1) matrices.
\emph{Linear Algebra Appl.} 119 (1989), 153--162. 
\MR{1005241},
\doi{10.1016/0024-3795(89)90075-X}

\bibitem{Morris-AutDirProd}
D.\,W.\,Morris:
On automorphisms of direct products of Cayley graphs on abelian groups.
\emph{Electron. J. Combin.} 28 (2021) \#3.5, 10 pages. 
\MR{4281682}, 
\doi{10.37236/9940}

\bibitem{QinXiaZhou-circulant}
Y.-L.\,Qin, B.\,Xia, and S.\,Zhou:
Stability of circulant graphs,
\emph{J.~Combin. Theory Ser.~B} 136 (2019) 154--169. 
\MR{3926283}, 
\doi{10.1016/j.jctb.2018.10.004}

\bibitem{QinXiaZhou-Petersen}
Y.-L.\,Qin, B.\,Xia, and S.\,Zhou:
Canonical double covers of generalized Petersen graphs, and double generalized Petersen graphs. 
\emph{J.~Graph Theory} 97 (2021) 70--81. 
\MR{4267028},
\doi{10.1002/jgt.22642}

\bibitem{sage}
SageMath, the Sage Mathematics Software System, version 9.7, 2022.
\url{https://www.sagemath.org}

\bibitem{CanCover}
Wikipedia: 
Bipartite double cover.
\url{https://en.wikipedia.org/wiki/Bipartite_double_cover}

\bibitem{Wilson}
S.\,Wilson:
Unexpected symmetries in unstable graphs,
\emph{J.~Combin. Theory Ser. B} 98 (2008) 359--383. 
\MR{2389604},
\doi{10.1016/j.jctb.2007.08.001}

\end{thebibliography}
\end{document}